\newcommand{\si}{\sigma}
\newcommand{\la}{\lambda}
\newcommand{\ol}{\overline}
\newcommand{\pa}{\partial}
\newcommand{\La}{\Lambda}
\newcommand{\al}{\alpha}
\newcommand{\be}{\beta}
\newcommand{\de}{\delta}
\newcommand{\ga}{\gamma}
\newcommand{\ka}{\kappa}
\newcommand{\ve}{\varepsilon}
\newcommand{\ze}{\zeta}
\newcommand{\Om}{\Omega}
\newcommand{\cd}{\cdot}
\newcommand{\R}{{\mathbb R}}
\newcommand{\C}{{\mathbb C}}
\newcommand{\Z}{{\mathbb Z}}
\newcommand{\N}{{\mathbb N}}
\newcommand{\T}{{\mathbb T}}
\newcommand{\cE}{{\cal E}}
\newcommand{\tcE}{\widetilde{\cal E}}
\newcommand{\tE}{\widetilde{E}}
\newcommand{\tN}{\widetilde{N}}
\newcommand{\cF}{{\cal F}}
\newcommand{\cM}{{\cal{M}}}
\renewcommand{\(}{\left(}
\renewcommand{\)}{\right)}
\newcommand{\Th}{\Theta}
\renewcommand{\i}{{\rm i}}
\renewcommand{\tt}{\tilde{t}}
\newcommand{\ttx}{\tilde{t}_\xi}
\newcommand{\tZ}{\tilde{Z}}
\newcommand{\ds}[1]{\displaystyle{#1}}
\renewcommand{\th}{\theta}
\newcommand{\tx}{t_\xi}
\newtheorem{theorem}{\bf Theorem}[section]
\newtheorem{lemma}[theorem]{\bf Lemma}
\newtheorem{proposition}[theorem]{\bf Proposition}
\newtheorem{corollary}[theorem]{\bf Corollary}
\theoremstyle{remark}
  \newtheorem{remark}[theorem]{\sc Remark}
\theoremstyle{definition}
  \newtheorem{definition}[theorem]{Definition}
  \newtheorem{example}[theorem]{\sc Example}
\numberwithin{equation}{section}
\begin{document}

\title{On the energy estimates of semi-discrete wave equations with time dependent propagation speed}
\author{
Fumihiko Hirosawa\footnote{Department of Mathematical Sciences, Faculty of Science, Yamaguchi University, Japan; 
e-mail: hirosawa@yamaguchi-u.ac.jp}
}
\date{}
\maketitle

\begin{abstract}
Discretization is a fundamental step in numerical analysis for the problems described by differential equations, and the difference between the continuous model and discrete model is one of the most important problems. 
In this paper, we consider the difference in the effect of the time-dependent propagation speed on the energy estimate of the solutions for the wave equation and the semi-discrete wave equation which is a discretization with respect to space variables. 
\end{abstract}

\section{Introduction}
%

The $d$-dimensional semi-discrete wave equation is given as the following infinite system of second order ordinary differential equations: 
\begin{equation}\label{u0}
\frac{d^2}{dt^2} u(t)[k] 
-a^2 \sum_{j=1}^d \(u(t)[k+e_j] - 2u(t)[k] + u(t)[k-e_j]\) =0,\;\;
  (t,k)\in \R \times  \Z^d, 
\end{equation}
where $u(t)=\{u(t)[k]\}_{k\in \Z^d}$ and 
$a$ is a positive constant, $\{e_j,\ldots,e_d\}$ is the standard basis of $\R^d$. 
\eqref{u0} is a discretization with respect to the space variables for the following $d$-dimensional wave equation: 
\begin{equation}\label{w}
  \pa_t^2 u(t,x) - a^2 \sum_{j=1}^d \pa_{x_j}^2 u(t,x) = 0,\;\;
  (t,x)\in \R \times \R^d, 
\end{equation}
where $a$ describes the propagation speed of the wave. 
One of the most important properties for the wave equation \eqref{w} is the following equality which is called the energy conservation: 
\begin{equation}\label{ec}
\tilde{E}(t):=\int_{\R^d} |\pa_t u(t,x)|^2\,dx
     +a^2 \sum_{j=1}^d \int_{\R^d} |\pa_{x_j} u(t,x)|^2\,dx
  \equiv \tilde{E}(t_0),\;\;
  t,t_0\in \R. 
\end{equation}
However, if the propagation speed $a$ depends on time variable, then \eqref{ec} does not hold in general. 
On the contrary, the existence of a solution in the Sobolev space may not be valid if $a(t)$ has singularities like non-Lipschitz continuity or degeneration (see \cite{CDS,CJS}); 
thus time dependent propagation speed is possible to give a crucial influence to the property of the wave equation. 

Let us consider the following Cauchy problem for the wave equation with time dependent propagation speed: 
\begin{equation}\label{wC}
\begin{cases}
\ds{\pa_t^2 u(t,x) - a(t)^2 \sum_{j=1}^d \pa_{x_j}^2 u(t,x) = 0}, 
  & (t,x)\in \R_+ \times \R^d, \\
u(0,x)=u_0(x),\;\; (\pa_t u)(0,x)=u_1(x), & x\in \R^d, 
\end{cases}
\end{equation}
where $\R_+:=[0,\infty)$ and 
$a(t)$ satisfies 
\begin{equation}\label{a0a1}
  a_0 \le a(t) \le a_1
\end{equation}
for some positive constants $a_0$ and $a_1$. 
Since the energy conservation does not hold for \eqref{wC}, 
we introduce the generalized energy conservation, which is abbreviated as GEC, 
as the following uniform equivalence of the energy with respect to $t$: 
\begin{equation}
\label{tGEC}
  \tilde{E}(0) \lesssim \tilde{E}(t) \lesssim \tilde{E}(0),
  \;\; t \in \R_+,
\end{equation} 
where $f \lesssim g$ with positive functions $f$ and $g$ denotes that 
there exists a positive constant $C$ such that $f \le Cg$. 
We will use the notations $g \gtrsim f$ and $f \simeq g$ if 
$f\lesssim g$ and $g\lesssim f \lesssim g$ hold, respectively. 
Moreover, we will use $C$ to denote a generic positive constant. 

Let $a\in C^m(\R_+)$ with $m\ge 1$ and $(u_0,u_1)\in H^2\times H^1$. 
Then a unique time global classical solution of \eqref{wC} exists, and the following energy estimate holds: 
\begin{equation}\label{tE-a'L1}
  \tilde{E}(t) \le \exp\(\frac{2}{a_0}\int^t_0 |a'(s)|\,ds\) \tilde{E}(0),
  \;\; t\in \R_+. 
\end{equation}
It follows that GEC holds if $a'\in L^1(\R_+)$. 
On the other hand, the success or failure of GEC depends on the properties of $a(t)$ and the initial data if $a'\not\in L^1((0,\infty))$. 
Indeed, the following result is known: 

\begin{theorem}[\cite{RS}]\label{Thm-RS}
\begin{itemize}
\item[(i)] 
If $a\in C^2(\R_+)$, $|a'(t)|\lesssim (1+t)^{-1}$ and $|a''(t)|\lesssim (1+t)^{-2}$, then GEC is established for the Cauchy problem \eqref{wC}. 
\item[(ii)] 
For any positive and monotone increasing function $\nu(t)$ satisfying 
$\lim_{t\to\infty}\nu(t)=\infty$, the conditions 
$|a'(t)|\lesssim \nu(t)(1+t)^{-1}$ and $|a''(t)|\lesssim \nu(t)(1+t)^{-2}$ 
does not necessarily conclude GEC. 
\end{itemize}
\end{theorem}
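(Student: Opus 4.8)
\emph{Proof proposal.} The plan for part (i) is to pass to the Fourier side: applying the partial Fourier transform in the space variables to \eqref{wC} yields, for each $\xi\in\R^d$, the ordinary differential equation
\[
\pa_t^2\hu(t,\xi)+a(t)^2|\xi|^2\hu(t,\xi)=0,
\]
and the associated micro-energy $E(t,\xi):=|\pa_t\hu(t,\xi)|^2+a(t)^2|\xi|^2|\hu(t,\xi)|^2$ satisfies $\tE(t)=\int_{\R^d}E(t,\xi)\,d\xi$ by Plancherel's theorem. Hence \eqref{tGEC} will follow once we prove $E(t,\xi)\simeq E(0,\xi)$ with a constant independent of $t\ge0$ and $\xi$. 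Fix a large constant $N$ and split $\R_+\times\R^d$ along the curve $\tx:=\max\{0,N|\xi|^{-1}-1\}$ into the pseudo-differential zone $\{|\xi|(1+t)\le N\}$ and the hyperbolic zone $\{|\xi|(1+t)\ge N\}$.

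In the pseudo-differential zone I would use the crude vector $\bu:=(|\xi|\hu,\pa_t\hu)^\top$, which by \eqref{a0a1} satisfies $|\bu|^2\simeq E(t,\xi)$ and solves $\pa_t\bu=|\xi|B(t)\bu$ with $\|B(t)\|$ bounded in terms of $a_1$ only; Gronwall's inequality then gives $|\bu(t)|\simeq|\bu(0)|$ for $0\le t\le\tx$, since $|\xi|t\le|\xi|(1+\tx)=N$. In the hyperbolic zone I would take instead $\bv:=(a(t)|\xi|\hu,\pa_t\hu)^\top$, so that $|\bv|^2=E(t,\xi)$ exactly and $\pa_t\bv=\bigl(a(t)|\xi|A_0+\tfrac{a'}{a}B_1\bigr)\bv$ with $A_0$ a fixed skew matrix. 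After diagonalizing $A_0$ by a constant matrix, one step of the usual diagonalization hierarchy --- a substitution $\bv=(I+P(t,\xi))\bw$ with $P$ off-diagonal of size $\lesssim|a'(t)|/|\xi|$, hence invertible on the hyperbolic zone where $|\xi|(1+t)\ge N$ --- brings the system to $\pa_t\bw=(\Th(t,\xi)+R(t,\xi))\bw$ with $\Th$ diagonal, $\Re\Th_{jj}=a'/(2a)$, and $\|R(t,\xi)\|\lesssim\bigl(|a''(t)|+|a'(t)|^2\bigr)|\xi|^{-1}$. The hypotheses of (i) enter exactly here: $\|R(t,\xi)\|\lesssim|\xi|^{-1}(1+t)^{-2}$, so $\int_{\tx}^\infty\|R(t,\xi)\|\,dt\lesssim|\xi|^{-1}(1+\tx)^{-1}=N^{-1}$ uniformly in $\xi$, while the diagonal part contributes only $\int_{\tx}^t a'/a\,ds=\log(a(t)/a(\tx))$, bounded by \eqref{a0a1}. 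Thus $|\bw(t)|\simeq|\bw(\tx)|$; unwinding the uniformly bounded substitutions gives $E(t,\xi)\simeq E(\tx,\xi)$ for $t\ge\tx$, and matching the two zones at $t=\tx$ followed by integration in $\xi$ yields \eqref{tGEC}. The delicate point in (i) is the pseudo-differential zone, where the naive estimate \eqref{tE-a'L1} diverges logarithmically and only the switch to the $|\xi|$-homogeneous micro-energy together with the scale-invariant smallness $|\xi|\tx\le N$ of that zone saves the day; note that $a''$ plays no role there, matching the fact that it is needed only to make $\|R\|$ integrable over the semi-infinite hyperbolic zone.

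For part (ii) I would give a parametric-resonance construction. Let $a(t)$ be equal to a constant $a_*\in(a_0,a_1)$ outside a sparse family of disjoint intervals $I_k=[t_k,t_k+\ell_k]$, and on $I_k$ let $a(t)=a_*\bigl(1+\ve_0\vphi(b_k(t-t_k))\bigr)$, where $\vphi$ is a fixed $2\pi$-periodic profile, $\ve_0$ is small and fixed, $\ell_k=2\pi m_k b_k^{-1}$ is an integer number $m_k\simeq k$ of periods, and the frequency is tuned to exact resonance $b_k=2a_*\eta_k$; near the endpoints one smooths $a$ so that $a\in C^\infty$. On $I_k$ one has $|a'|\simeq\ve_0 b_k$ and $|a''|\simeq\ve_0 b_k^2$, so placing $I_k$ far enough out makes $|a'(t)|\lesssim\nu(t)(1+t)^{-1}$ and $|a''(t)|\lesssim\nu(t)(1+t)^{-2}$ hold --- it is precisely the $\nu$-enlargement of the critical rates that allows $b_k$ to be taken large compared with $(1+t)^{-1}$, hence a macroscopically long resonant window. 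For $\eta=\eta_k$ the Hill equation $\pa_t^2\hu+a_*^2\eta^2\bigl(1+2\ve_0\vphi(b_k(t-t_k))+\cdots\bigr)\hu=0$ is in its first instability tongue, giving $E(t_k+\ell_k,\eta_k)\gtrsim\e^{c\ve_0 m_k}E(t_k,\eta_k)$ for a universal $c>0$; for $\eta$ kept away from every $\eta_j$ the equation on $I_j$ stays in a stability interval, so its monodromy over $\ell_j$ is conjugate to a rotation and $E$ changes by a factor independent of $m_j$, close to $1$ once the $\eta_k$ are well separated. Choosing initial data in $H^2\times H^1$ whose Fourier transforms are supported in thin shells $\{|\xi|\approx\eta_k\}$ carrying energy $2^{-k}$ (so $\tE(0)<\infty$) and $m_k$ with $\e^{c\ve_0 m_k}\ge4^k$, one obtains $\tE(t_k+\ell_k)\gtrsim2^k\to\infty$, which contradicts \eqref{tGEC}.

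The main obstacle for (ii) is the quantitative resonance analysis: one must establish the amplification lower bound $\e^{c\ve_0 m_k}$ at the resonant frequency and, at the same time, an $m_j$-independent upper bound for all off-resonant frequencies, so that the amplifications accumulate on a single fixed datum rather than cancelling in the sum over $k$. The clean way to do this is a further diagonalization of the Hill system on each $I_j$ that isolates the resonant Fourier mode of $\vphi$ --- the only mode producing secular growth --- and treats the rest as a perturbation which, by the separation of the $\eta_k$, is absolutely summable in $k$; the rapidly decreasing tails of a smooth $\vphi$ make this estimate robust.
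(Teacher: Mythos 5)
This theorem is quoted from \cite{RS} and the paper contains no proof of it, so there is no internal argument to compare against; I can only assess your proposal on its own terms and against the machinery the paper deploys later for the semi-discrete analogue. For part (i) your strategy is the standard one and is sound: the split along $(1+t)|\xi|=N$ into a pseudo-differential zone (where the $|\xi|$-homogeneous micro-energy and the bound $|\xi|t\le N$ give a uniform constant) and a hyperbolic zone (where one diagonalization step produces an off-diagonal remainder of size $|\xi|^{-1}(1+t)^{-2}$, integrable to $N^{-1}$) is exactly the scheme that the paper itself generalizes in Section 4 for Theorem 3.1 (iii), there with an $m$-step refined diagonalization and the zone boundary defined by $\Th(t_\xi)|\xi|=N$ instead of $(1+t_\xi)|\xi|=N$. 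For part (ii) the parametric-resonance construction is the right and standard route. One quantitative point you should not gloss over: you cannot take the oscillation frequency $b_k\simeq\nu(t_k)(1+t_k)^{-1}$ as saturating the $a'$-condition, because then $|a''|\simeq\ve_0 b_k^2\simeq\ve_0\nu(t_k)^2(1+t_k)^{-2}$ violates the $a''$-condition; ``placing $I_k$ far enough out'' does not repair this. The correct choice is $b_k\simeq\sqrt{\nu(t_k)}\,(1+t_k)^{-1}$, which satisfies both derivative bounds and still yields $m_k\simeq t_k b_k\simeq\sqrt{\nu(t_k)}\to\infty$ resonant periods, so the amplification $\e^{c\ve_0 m_k}$ is still unbounded and the counterexample goes through. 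With that correction, and granting the (genuinely nontrivial, but classical) instability/stability analysis of the Hill monodromy that you flag as the main obstacle, the proposal is a correct outline of the proof of the cited result.
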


\begin{remark}
\eqref{tE-a'L1} is derived by the estimate
\begin{equation}\label{tE'}
  \tilde{E}'(t)=2a'(t)a(t) \sum_{j=1}^d \int_{\R^d} |\pa_{x_j} u(t,x)|^2\,dx
  \le \frac{2|a'(t)|}{a(t)} \tilde{E}(t)
\end{equation}
and Gronwall's inequality. 
We observe from the first equality of \eqref{tE'} that $\tilde{E}(t)$ increases and decreases if $a'(t)>0$ and $a'(t)<0$, respectively. 
That is, time dependent propagation speed causes increase or decrease for the energy. 
Furthermore, we notice that the second inequality is not taken account the sign of $a'(t)$. 
Actually, \eqref{tE'} is obtained with assuming both $a'(t)>0$ and $a'(t)<0$ increase the energy, but Theorem 1.1 (i) is derived with considering some cancellation of the energy which is caused by changing sign of $a'(t)$. 
\end{remark}

According to Theorem \ref{Thm-RS}, the oscillation speed of $a(t)$, which is described by the order of $|a'(t)|$, is crucial for GEC, and the order of threshold is $(1+t)^{-1}$. 
However, GEC is not determined only the order of $|a'(t)|$. 
Indeed, under the additional assumptions to $a(t)$ below, GEC is possible even if $|a'(t)|\lesssim (1+t)^{-1}$ does not hold. 

\begin{description}
\item[(H1)]
There exists a positive constant $a_\infty$ such that either of the following estimates hold: 
\begin{equation}\label{stbc-}
  \int^\infty_t|a(s)-a_\infty|\,ds \lesssim (1+t)^{\al}
  \;\text{ for }\;
  \al < 0
\end{equation}
or
\begin{equation}\label{stbc}
  \int^t_0|a(s)-a_\infty|\,ds \lesssim (1+t)^{\al}
  \;\text{ for }\;
  0\le \al \le 1.
\end{equation}
\item[(H2)]
$a \in C^m(\R_+)$ with $m\ge 1$ and the following estimates hold for $\be<1$: 
\begin{equation}\label{akc}
  \left|a^{(k)}(t)\right| \lesssim (1+t)^{-k\be},\;\; k=1,\ldots,m. 
\end{equation}
\end{description}

Then the following theorem is established: 
\begin{theorem}[\cite{H07}]\label{Thm-H07}
If $m\ge 2$, $a(t)$ satisfies {\rm (H1)} and {\rm (H2)} for 
\begin{equation}\label{albem}
  \be \ge \al + \frac{1-\al}{m},
\end{equation}
then GEC is established for the Cauchy problem \eqref{wC}. 
On the other hand, if $\be<\al$, then GEC does not hold in general. 
\end{theorem}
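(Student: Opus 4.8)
The plan is to pass to the Fourier side in $x$, rewrite GEC as a uniform–in–$\xi$ energy equivalence for a family of ordinary differential equations, and study them by a two–zone WKB argument; as the Remark above makes clear, any gain over the crude bound \eqref{tE-a'L1} must come from exploiting the cancellation caused by the sign changes of $a'$, i.e.\ from integration by parts against the oscillation $\exp\!\bigl(\pm\i\int a(s)|\xi|\,ds\bigr)$, which is exactly what diagonalisation does. Concretely, $v(t,\xi):=\widehat u(t,\xi)$ solves $v''+a(t)^2|\xi|^2v=0$, and with $\mathcal E(t,\xi):=|v'(t,\xi)|^2+a(t)^2|\xi|^2|v(t,\xi)|^2$ one has $\tilde E(t)\simeq\int_{\R^d}\mathcal E(t,\xi)\,d\xi$, so by \eqref{a0a1} GEC is equivalent to $\mathcal E(t,\xi)\simeq\mathcal E(0,\xi)$ uniformly in $t\in\R_+$ and $\xi\neq0$. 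Fix a large constant $N$ and a separating time $t_\xi$ with $1+t_\xi\simeq|\xi|^{-\kappa}$ for $|\xi|\lesssim N$ (and $t_\xi=0$ otherwise), where $\kappa\ge1/\beta$ will be chosen below; this splits $\R_+$ into the reduced zone $Z_{\rm red}(\xi)=\{0\le t\le t_\xi\}$, where $a(t)|\xi|$ is not yet dominant over the variation scale $|a'(t)|\lesssim(1+t)^{-\beta}$ of the coefficient, and the hyperbolic zone $Z_{\rm hyp}(\xi)=\{t\ge t_\xi\}$, where it is (there $(1+t)^{-\beta}\lesssim|\xi|/N$). It suffices to prove $\mathcal E(t,\xi)\simeq\mathcal E(0,\xi)$ on $Z_{\rm red}$ and $\mathcal E(t,\xi)\simeq\mathcal E(t_\xi,\xi)$ on $Z_{\rm hyp}$, uniformly in $\xi$.

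On the reduced zone I would compare $v$ with the constant–speed flow of $a_\infty$: with $\mathcal E_\infty(t,\xi):=|v'|^2+a_\infty^2|\xi|^2|v|^2\simeq\mathcal E$, one computes $\mathcal E_\infty'(t,\xi)=2(a_\infty^2-a(t)^2)|\xi|^2\,\mathrm{Re}(\overline{v'}v)$; since $|\mathrm{Re}(\overline{v'}v)|\lesssim|\xi|^{-1}\mathcal E_\infty$ and $|a_\infty^2-a(t)^2|\lesssim|a(t)-a_\infty|$, Gronwall's inequality gives $\mathcal E_\infty(t,\xi)\le\exp\!\bigl(C|\xi|\int_0^{t_\xi}|a(s)-a_\infty|\,ds\bigr)\mathcal E_\infty(0,\xi)$ on $Z_{\rm red}$ (and the reverse inequality by time–reversibility). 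By (H1) the exponent is $\lesssim|\xi|(1+t_\xi)^\alpha\simeq|\xi|^{1-\kappa\alpha}$ — using \eqref{stbc} when $0\le\alpha\le1$ and \eqref{stbc-} when $\alpha<0$, in which latter case $a-a_\infty\in L^1(\R_+)$ and the exponent is even $\lesssim|\xi|$ — so it stays bounded as $|\xi|\to0$ precisely when $\kappa\alpha\le1$ (no constraint if $\alpha\le0$), while for $|\xi|\gtrsim N$ there is nothing to prove. This is where (H1) is used, and it forces $\kappa\le1/\alpha$ when $\alpha>0$.

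On the hyperbolic zone I would diagonalise the first–order system for $V=(\i a(t)|\xi|v,\,v')^{\top}$: conjugating by the constant diagonaliser of its principal part gives $W'=(\mathcal D+R_1)W$ with $\mathcal D=\mathrm{diag}(\pm\i a(t)|\xi|)$, whose fundamental matrix is unitary, and $R_1=\tfrac{a'}{2a}\left(\begin{smallmatrix}1&1\\1&1\end{smallmatrix}\right)$; absorbing the diagonal part of $R_1$ into the bounded invertible amplitude $(a(t)/a(t_\xi))^{1/2}$ and performing $m-1$ further elementary steps $W^{(j)}=(I+N_j)W^{(j+1)}$ with $[\mathcal D,N_j]=-R_j^{\mathrm{off}}$ — for which $\|N_j\|\lesssim(1+t)^{-j\beta}|\xi|^{-j}\le N^{-j}\le\tfrac12$ on $Z_{\rm hyp}$, so that $I+N_j$ is boundedly invertible — one reaches $(W^{(m)})'=(\mathcal D+R_m)W^{(m)}$ with $\|R_m(t,\xi)\|\lesssim(1+t)^{-m\beta}|\xi|^{-(m-1)}$, the leading contribution being $\sim a^{(m)}(t)|\xi|^{-(m-1)}$; this is where (H2) enters, and it needs $a\in C^m$, hence $m\ge2$ to be nontrivial. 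By unitarity of the $\mathcal D$–flow, Gronwall reduces $\mathcal E(t,\xi)\simeq\mathcal E(t_\xi,\xi)$ on $Z_{\rm hyp}$ to the uniform bound $\int_{t_\xi}^\infty\|R_m(s,\xi)\|\,ds\le C$. Here comes the crucial balancing: when $m\beta>1$,
\[
\int_{t_\xi}^\infty(1+s)^{-m\beta}|\xi|^{-(m-1)}\,ds\ \simeq\ |\xi|^{\,\kappa(m\beta-1)-(m-1)},
\]
which stays bounded as $|\xi|\to0$ iff $\kappa\ge(m-1)/(m\beta-1)$; combined with the earlier requirement $\kappa\le1/\alpha$ (for $\alpha>0$), a compatible exponent $\kappa$ exists iff $(m-1)/(m\beta-1)\le1/\alpha$, i.e.\ iff $m\beta-1\ge\alpha(m-1)$, that is iff
\[
\beta\ \ge\ \frac{1+\alpha(m-1)}{m}\ =\ \alpha+\frac{1-\alpha}{m},
\]
which is exactly \eqref{albem}; for $\alpha\le0$ there is no upper constraint on $\kappa$ and one simply takes $\kappa=(m-1)/(m\beta-1)$. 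When $m\beta\le1$ (which under \eqref{albem} forces $\alpha$ rather negative) one cannot gain by enlarging $Z_{\rm red}$ alone and must additionally feed (H1) into the remainder: integrate $R_m$ by parts so as to trade the non–integrable $a^{(m)}$ against $\int|a-a_\infty|$ controlled by (H1) — on dyadic subintervals of $Z_{\rm hyp}$ when $\alpha\ge0$, the interior boundary terms telescoping away — which once more produces the threshold \eqref{albem}. Choosing such a $\kappa$ and gluing the two zones yields $\mathcal E(t,\xi)\simeq\mathcal E(0,\xi)$ uniformly, i.e.\ GEC.

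The hard part is this last balancing: one has to organise the diagonalisation together with the placement of the zone boundary so that the powers of $|\xi|$ coming from the $m$ diagonalisation steps, from $t_\xi$, and from the stabilisation rate $\alpha$ cancel uniformly both as $|\xi|\to0^+$ and as $|\xi|\to\infty$, and checking that $m$ derivatives suffice is precisely what singles out the exponent $\alpha+(1-\alpha)/m$; the borderline equality in \eqref{albem} needs a logarithmically refined zone. Finally, for the optimality statement, given $\beta<\alpha$ I would take $a(t)=a_\infty+b(t)$ with $b$ supported on intervals $I_n$ centred at $t_n\to\infty$ on which $b$ oscillates at the frequency $\simeq2a_\infty|\xi_n|$ resonant with a chosen $\xi_n$, with amplitude and length of the bumps as large as (H2) permits ($|b^{(k)}(t)|\lesssim(1+t_n)^{-k\beta}$ for $k\le m$). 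Because $\beta<\alpha$ there is still enough room under the budget \eqref{stbc} to keep $\int_0^t|b|\lesssim(1+t)^\alpha$ while a Floquet/parametric–resonance estimate amplifies $\mathcal E(\cdot,\xi_n)$ by a factor $\ge e^{c_n}$ across $I_n$ with $\sum_n c_n=\infty$, so that $\sup_t\tilde E(t)/\tilde E(0)=\infty$ for suitable data and GEC fails.
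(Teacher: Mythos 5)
This theorem is quoted from \cite{H07}; the paper never proves it, but the strategy you propose --- Fourier reduction to $v''+a(t)^2|\xi|^2v=0$, a splitting of $\R_+\times\R^d$ at a boundary $1+t_\xi\simeq|\xi|^{-\ka}$ into a reduced zone handled by comparing with the $a_\infty$-energy via (H1) and Gronwall, and a hyperbolic zone handled by $m-1$ steps of refined diagonalization producing a remainder of size $|\xi|^{-(m-1)}(1+t)^{-m\be}$ --- is exactly the method of \cite{H07}, and it is the same machinery this paper deploys in Section 4 for Theorem \ref{Thm1} (iii), where your free exponent $\ka$ is fixed by the choice $\Th(t_\xi)|\xi|=N$ and your balancing condition $\ka(m\be-1)\ge m-1$ is precisely what makes \eqref{int_Cm} hold for $\Th(t)=(1+t)^\al$, $\Xi(t)=(1+t)^\be$, reproducing the threshold \eqref{albem}. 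Your treatment is correct and complete in the main case $m\be>1$ (the only case the paper itself instantiates); the sub-case $m\be\le1$ arising for $\al<0$ and the resonant-bump counterexample for $\be<\al$ are only sketched, which is consistent with the statement being cited rather than proved here.
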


\begin{remark}
The right hand side of \eqref{albem} is smaller as $m$ larger or $\al$ smaller. That is, the restriction on $\be$ can be weaker if the differentiability of $a(t)$ is higher or the restriction of (H1) is stronger. 
Actually, the Theorem \ref{Thm-H07} does not conclude the optimality of the condition \eqref{albem}, but the limit case $m=\infty$ is nearly optimal for GEC. 
\end{remark}

\begin{remark}
Since the estimate \eqref{stbc} with $\al=1$ is trivial, 
Theorem \ref{Thm-RS} can be considered a special case of Theorem \ref{Thm-H07} without (H1). 
\end{remark}

Let us consider the following initial boundary value problem: 
\begin{equation}\label{wIBV}
\begin{cases}
\ds{\pa_t^2 u(t,x) - a^2 \sum_{j=1}^d \pa_{x_j}^2 u(t,x) = 0}, 
  & (t,x)\in \R_+ \times \Om, \\
u(t,x)=0, & (t,x)\in \R_+\times \pa\Om,\\
u(0,x)=u_0(x),\;\; \(\pa_t u\)(0,x)=u_1(x), & x\in \pa\Om, 
\end{cases}
\end{equation}
where $\Om$ is a bounded domain of $\R^d$ with smooth boundary $\pa\Om$. 
Then the following energy conservation corresponding to \eqref{ec} for \eqref{wC} is established:  
\begin{equation*}
\tilde{E}(t):=\int_{\Om} |\pa_t u(t,x)|^2\,dx
     +a^2 \sum_{j=1}^d \int_{\Om} |\pa_{x_j} u(t,x)|^2\,dx
\equiv \tilde{E}(t_0). 
\end{equation*}
Moreover, if $a$ depends on time variable, then the following theorem, which is corresponding to Theorem \ref{Thm-H07} in bounded domain, is established: 
\begin{theorem}[\cite{H10}]\label{Thm-H10}
If $m\ge 2$ and $a=a(t)$ satisfies {\rm (H2)} for 
\begin{equation}\label{albem-bdd}
  \be \ge \frac{1}{m},
\end{equation}
then GEC is established for the initial boundary problem \eqref{wIBV}. 
\end{theorem}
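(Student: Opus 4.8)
The plan is to reduce the initial boundary value problem \eqref{wIBV} with time-dependent speed $a=a(t)$ to a countable family of ordinary differential equations by expanding the solution in the eigenfunctions of the Dirichlet Laplacian on $\Om$. Let $\{\vphi_n\}_{n\in\N}$ be an orthonormal basis of $L^2(\Om)$ consisting of Dirichlet eigenfunctions, $-\Delta\vphi_n=\la_n^2\vphi_n$ with $0<\la_1\le\la_2\le\cdots\to\infty$. Writing $u(t,x)=\sum_n v_n(t)\vphi_n(x)$, the equation \eqref{wIBV} becomes $v_n''(t)+a(t)^2\la_n^2 v_n(t)=0$ for each $n$, and by Parseval the energy splits as $\tilde{E}(t)=\sum_n E_n(t)$ with $E_n(t)=|v_n'(t)|^2+a(t)^2\la_n^2|v_n(t)|^2$. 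It then suffices to prove $E_n(t)\simeq E_n(0)$ with constants \emph{uniform in $n$}, since summing over $n$ gives GEC. After rescaling time (or simply noting $\la_n\ge\la_1>0$), this is exactly the one-dimensional oscillation-theory problem that underlies Theorem~\ref{Thm-H07}, but now the low-frequency degeneracy present in the whole-space case \eqref{wC} is absent because $\la_n$ is bounded away from zero; this is why (H1) is not needed here and the exponent condition weakens from \eqref{albem} to \eqref{albem-bdd}.

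The core estimate is the uniform-in-frequency bound for $|v_n'|^2+a(t)^2\la_n^2|v_n|^2$, which I would obtain by the standard zones/diagonalization technique for $v_n''+a(t)^2\la_n^2 v_n=0$. First, split $\R_+$ into a hyperbolic zone $Z_{hyp}(n)=\{t:\la_n(1+t)^{\be}\ge N\}$ and a pseudodifferential (or bounded) zone $Z_{pd}(n)=\{t:\la_n(1+t)^{\be}\le N\}$, where $N$ is a large fixed constant. In the hyperbolic zone, perform a WKB / step-by-step diagonalization of the first-order system for $(v_n', a(t)\la_n v_n)$: the first diagonalization produces an error term of size $|a'(t)|/(a(t)\la_n)\lesssim (1+t)^{-\be}/\la_n$, and each further diagonalization step (using (H2) for higher derivatives) gains another factor of the same size; after $m$ steps the remaining off-diagonal error is integrable over $Z_{hyp}(n)$ precisely under $\be\ge 1/m$, because $\int_{Z_{hyp}(n)}(\la_n(1+t)^{\be})^{-m}\,dt$ is controlled by $N^{-m}$ times a convergent integral uniformly in $n$. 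Then Gronwall on the diagonalized system gives $E_n(t)\simeq E_n(s)$ for $t,s\in Z_{hyp}(n)$ with $n$-independent constants. In the bounded zone $Z_{pd}(n)$, where $t$ ranges over $[0,t_n]$ with $\la_n(1+t_n)^{\be}\asymp N$, one uses the crude energy inequality coming from \eqref{tE'}-type reasoning: $E_n'(t)\le (2|a'(t)|/a(t))E_n(t)\lesssim (1+t)^{-\be}E_n(t)$, and $\int_0^{t_n}(1+t)^{-\be}\,dt\lesssim (1+t_n)^{1-\be}\lesssim (\la_n^{-1}N)^{(1-\be)/\be}$ — this must be bounded uniformly in $n$, which it is since $\la_n\ge\la_1$ (in fact it tends to $0$ as $n\to\infty$, and is bounded on the finitely many $n$ with $\la_n$ small). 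Matching the two zones at $t=t_n$ closes the argument.

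The main obstacle, and the place requiring genuine care, is tracking the uniformity of all constants in the frequency parameter $\la_n$ through the diagonalization procedure — in particular verifying that the symbol classes and the remainder estimates at each of the $m$ diagonalization steps depend on $\la_n$ only through the gain factor $(\la_n(1+t)^{\be})^{-1}$, so that the final integrability condition is cleanly $\be\ge 1/m$ independent of $n$. One must also check the borderline case $\be=1/m$ (where the last remainder is only barely integrable, with $\int (\la_n(1+t)^{\be})^{-m}\,dt$ logarithmically sized over the zone but still uniformly small once $N$ is chosen large) and confirm that the two-sided bound $E_n(t)\simeq E_n(0)$ survives — the lower bound follows from the upper bound applied with time reversed, since the equation and all estimates are symmetric under $t\mapsto$ "run the diagonalized system backwards." Finally, since the low-frequency block of finitely many $n$ with, say, $\la_n\le 1$ (or more precisely those for which $Z_{pd}(n)$ extends to a fixed time) can be handled by the trivial bound \eqref{tE-a'L1} on any finite time interval followed by the hyperbolic-zone estimate, no analogue of (H1) is needed, and summing $E_n(t)\simeq E_n(0)$ over $n\in\N$ yields $\tilde{E}(t)\simeq\tilde{E}(0)$, which is GEC for \eqref{wIBV}.
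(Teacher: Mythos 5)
The paper does not actually prove Theorem~\ref{Thm-H10}: it is quoted from \cite{H10} as known background. That said, your strategy --- expansion in Dirichlet eigenfunctions, Parseval, and a uniform-in-$n$ two-sided bound on each modal energy via a bounded-zone/hyperbolic-zone splitting and an $m$-step refined diagonalization --- is exactly the strategy the paper itself deploys for the semi-discrete analogue (the proof of Theorem~\ref{Thm1}~(iii)), and is the standard route to the cited result. Most of your outline is sound: the reduction to $v_n''+a(t)^2\la_n^2v_n=0$, the crude Gronwall bound on the bounded zone (uniform in $n$ precisely because $\la_n\ge\la_1>0$), and the diagonalization gaining a factor $\(\la_n(1+t)^{\be}\)^{-1}$ per step all check out, and they correctly explain why no analogue of (H1) is needed.

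The genuine gap is the borderline case $\be=1/m$, which the statement includes. After $m$ diagonalization steps the off-diagonal remainder has size $\la_n^{-m+1}(1+t)^{-m\be}$ (the analogue of $r_m\in S^{(0)}\{-m+1,m\}$ in the paper's symbol calculus), and for $m\be=1$ its integral over the hyperbolic zone $\{t:\la_n(1+t)^{\be}\ge N\}$ --- which is the \emph{unbounded} interval $[t_n,\infty)$ --- is $\la_n^{-m+1}\int_{t_n}^{\infty}(1+t)^{-1}\,dt=\infty$. Your parenthetical claim that this integral is ``logarithmically sized over the zone but still uniformly small once $N$ is chosen large'' is false: no choice of $N$ makes $\int^{\infty}(1+t)^{-1}\,dt$ converge, and Gronwall then only yields $E_n(t)\lesssim(1+t)^{C\la_n^{1-m}}E_n(0)$, which is unbounded in $t$ for each fixed $n$. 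Note that the paper's own generalized hypothesis (H4*) explicitly requires $\Xi(t)^{-m}\in L^1(\R_+)$, i.e.\ $m\be>1$ in the power-law case, precisely because this integrability is what the diagonalization argument consumes. So as written your proof establishes the theorem only for $\be>1/m$; closing the endpoint $\be=1/m$ requires an additional idea not present in your sketch.
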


The condition \eqref{albem-bdd} in Theorem \ref{Thm-H10} corresponds to \eqref{albem} in Theorem \ref{Thm-H07} with $\al=0$. 
This implies that GEC is established without the assumption (H1) even though the oscillation speed is faster on the problem \eqref{wIBV}. 
Briefly, (H1) and (H2) are required for the estimate of the solution in the time-frequency space for the low and the high frequency part, respectively. 
However, (H1) is not necessary for the problem \eqref{wIBV} because the influence to the low frequency part can be neglected since the largest Dirichlet eigenvalue of the Laplace operator $\sum_{j=1}^d \pa_{x_j}^2$ is strictly negative. 
One of our main interest of the present paper is how the properties (H1) and (H2) for the time dependent propagation speed $a(t)$ relate to the energy estimate for semi-discrete wave equation \eqref{u0}. 

There are many studies on the discretized wave equations with constant propagation speed, but not many results are known for time dependent propagation model, especially in the case that the propagation speed $a(t)$ is singular to collapse GEC. In \cite{CR}, an approximation of the discretized wave equation with respect to time variable is studied in the case that $a(t)$ is degenerate and oscillating which was studied in \cite{CJS}. The result is not directly related to the studies of the present paper; indeed, no approximation to the continuous model will be discussed, but discretization is a useful approach to study the influence of the time dependent propagation speed to the energy of the solution in time-frequency spaces.

%
\section{Discretization and discrete-time Fourier transformation}
%

Let us consider a discretized model of the Cauchy problem for the wave equation \eqref{wC} with respect to the space variables $x$. 

For $f = \{f[k]\}_{k\in \Z^d}$ and $j=1,\ldots,d$, we denote 
the forward and the backward difference operators $D_j^+$ and $D_j^-$ by 
\begin{equation*}
  D_j^+ f := \{f[k+e_j]-f[k]\}_{k\in\Z^d}
  \;\text{ and }\;
  D_j^- f := \{f[k]-f[k-e_j]\}_{k\in\Z^d},
  \;\; k\in \Z^d,
\end{equation*}
respectively. 
Then the discrete Laplace operator in $\Z^d$ is given by 
$\sum_{j=1}^d D_j^+ D_j^-$, that is, 
\begin{equation*}
  \sum_{j=1}^d D_j^+ D_j^- f[k]
 =\sum_{j=1}^d \(f[k+e_j]-2f[k]+f[k-e_j]\). 
\end{equation*}

For the solution $u(t,x)$ of \eqref{wC}, we consider the $d$-dimensional infinite matrix valued function $u(t)=\{u(t)[k]\}_{k\in \Z^d}$ as a sampled of $u(t,x)$ on the lattice $\Z^d$. 
Then a discretized model of \eqref{wC} is given as the following initial value problem for an infinite system of ordinary differential equations, 
which is called the semi-discrete wave equation with time dependent propagation speed: 
\begin{equation}\label{u}
\begin{cases}
\ds{
  u''(t)[k] 
 -a(t)^2 \sum_{j=1}^d D_j^+ D_j^- u(t)[k] =0, }
  & (t,k)\in \R_+ \times \Z^d, 
\\[3mm]
\ds{
  u(0)[k]=u_0[k],\;\; u'(0)[k]=u_1[k], }
  &k\in \Z^d.
\end{cases}
\end{equation}
Then we define the total energy for the solution of \eqref{u} by 
\begin{equation*}
  E(t):=
  \sum_{k\in \Z^d}\left|u'(t)[k]\right|^2
 +a(t)^2 \sum_{j=1}^d \sum_{k\in \Z^d}\left|D_j^+ u(t)[k]\right|^2. 
\end{equation*}
Evidently, the energy conservation $E(t)\equiv E(0)$ is valid if the propagation speed $a$ is a constant, but it does not hold in general for variable propagation speed. 
Therefore, the following energy estimate corresponding to \eqref{tGEC} can be considered: 
\begin{equation}\label{GEC}
  E(t) \simeq E(0).
\end{equation}
Here \eqref{GEC} will be also denoted by GEC. 

It is usual to study the energy estimate of \eqref{wC} and \eqref{wIBV} in the time-frequency spaces by introducing Fourier transformation and Fourier coefficients with respect to the space variables of the solution rather than the solutions $u(t,x)$ themselves. 
Therefore, we introduce the discrete-time Fourier transformation to study the energy estimate of the solution for \eqref{u}. 

\begin{definition}
For $f=\{f[k]\}_{k\in \Z^d} \in l^2(\Z^d)$ we define the discrete-time Fourier transformation $\cF_{\Z^d}[f](\th)$ by
\begin{equation*}
  \cF_{\Z^d}[f](\th):=\sum_{k\in \Z^d} e^{-\i k\cd\th} f[k],
  \;\; \th=(\th_1,\ldots,\th_d) \in \R^d,
\end{equation*}
where $k \cdot \th = \sum_{j=1}^d k_j \th_j$ and $k=(k_1,\ldots,k_d)$. 
We denote that $\cF_{\Z^d}[f]=\hat{f}$ and 
$\sum_{k\in \Z^d}=\sum_{k}$ without any confusion. 
\end{definition}

Since the discrete-time Fourier transformation $\hat{f}(\th)$ is the $d$-dimensional Fourier series with the Fourier coefficient $\{f[-k]\}_{k\in \Z^d}$, 
$\hat{f}$ is a $2\pi$-periodic function in $\R^d$. 
That is, the following equality holds:  
\begin{equation*}
  \hat{f}(\th+2k\pi)=\hat{f}(\th)
\end{equation*}
for any $k\in \Z^d$ and $\th\in \T^d$, where $\T:=[-\pi,\pi]$. 
Therefore, we will restrict the domain of $\cF_{\Z^d}[\cd]$ on $\T^d$. 

Here we introduce some lemmas for the discrete-time Fourier transformation, 
where the proofs will be introduced in Appendix. 
\begin{lemma}\label{TDFT}
If $f \in l^1(\Z^d)$ then the following equalities are established 
for $j=1,\ldots,d$: 
\begin{equation}\label{TDFT2}
  \cF_{\Z^d}[D_j^+ f](\th) = \(e^{\i\th_j}-1\) \hat{f}(\th)
\end{equation}
and
\begin{equation}\label{TDFT3}
 \cF_{\Z^d}[D_j^+D_j^- f](\th) 
  = -4\(\sin\frac{\th_j}{2}\)^2\hat{f}(\th).
\end{equation}
\end{lemma}

\begin{lemma}\label{lemm-Parseval}
If $f \in l^2(\Z^d)$ then 
the following Parseval's type equality is established: 
\begin{equation*}
  \sum_{k\in \Z^d}|f[k]|^2
 =\frac{1}{(2\pi)^d}\int_{\T^d}|\hat{f}(\th)|^2\,d\th. 
\end{equation*}
\end{lemma}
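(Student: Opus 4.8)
The plan is to reduce Lemma~\ref{lemm-Parseval} to the orthogonality of the exponential system $\{e^{\i k\cd\th}\}_{k\in\Z^d}$ over $\T^d$. The starting point is the elementary identity
\begin{equation*}
  \frac{1}{(2\pi)^d}\int_{\T^d} e^{-\i (k-l)\cd\th}\,d\th
  = \prod_{j=1}^d \frac{1}{2\pi}\int_{-\pi}^{\pi} e^{-\i(k_j-l_j)\th_j}\,d\th_j
  = \de_{k,l},
\end{equation*}
where $\de_{k,l}=1$ for $k=l$ and $\de_{k,l}=0$ otherwise; this holds because $\int_{-\pi}^{\pi} e^{-\i n\th}\,d\th$ equals $2\pi$ for $n=0$ and vanishes for every nonzero integer $n$.

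For a sequence $f$ with finite support the conclusion is then immediate. Indeed $\hat f(\th)=\sum_k e^{-\i k\cd\th} f[k]$ is a finite sum, so
\begin{equation*}
  |\hat f(\th)|^2 = \sum_{k}\sum_{l} f[k]\,\ol{f[l]}\; e^{-\i(k-l)\cd\th},
\end{equation*}
and integrating this finite double sum against $(2\pi)^{-d}\,d\th$ and applying the orthogonality relation collapses it to $\sum_k |f[k]|^2$.

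To reach a general $f\in l^2(\Z^d)$ I would argue by density. Writing $f_N$ for the truncation of $f$ to $\{k\in\Z^d : |k|\le N\}$, one has $f_N\to f$ in $l^2(\Z^d)$, and the identity already established, applied to the finitely supported sequence $f_N-f_M$, shows that $\{\hat f_N\}_N$ is Cauchy, hence convergent, in $L^2(\T^d)$; its limit coincides with $\hat f$ (for $f\in l^1$ the partial sums even converge uniformly, and in the general case this is exactly how $\cF_{\Z^d}$ is extended to $l^2$ data). Letting $N\to\infty$ in $\sum_{|k|\le N}|f[k]|^2=(2\pi)^{-d}\|\hat f_N\|_{L^2(\T^d)}^2$ gives the claim. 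Equivalently, one may simply invoke the abstract Parseval identity for the orthogonal basis $\{e^{-\i k\cd\th}\}_{k\in\Z^d}$ of $L^2(\T^d)$, each vector having squared norm $(2\pi)^d$.

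The only point requiring any care is the interchange of the infinite summation with the integration in the last step — equivalently, the distinction between $l^1$ and $l^2$ data. For the way the lemma is used in this paper the relevant sequences decay rapidly, so the $l^1$ version with a direct appeal to Fubini's theorem on the finite-measure space $\T^d$ would already be enough; the truncation argument above is what makes the statement hold verbatim for all $f\in l^2(\Z^d)$.
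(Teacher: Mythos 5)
Your proof is correct and follows essentially the same route as the paper's: expand $|\hat f(\th)|^2$ as a double sum and apply the orthogonality relation $\int_{\T^d} e^{-\i(k-l)\cd\th}\,d\th=(2\pi)^d\de_{kl}$. The only difference is that you justify the interchange of summation and integration by truncation and $l^2$-density, whereas the paper carries it out formally; this is added care rather than a different method.
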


For $\th\in \T^d$ we define $\xi=\xi(\th)$ by 
\begin{equation}\label{xith}
  \xi(\th)=(\xi_1(\th_1),\ldots,\xi_d(\th_d)),\;\;
  \xi_j(\th_j):=2\sin\frac{\th_j}{2}
  \;\;(j=1,\ldots,d).
\end{equation}
By the discrete-time Fourier transformation, \eqref{u} is reduced to the following problem: 
\begin{equation}\label{hu}
\begin{cases}
\partial_t^2 \hat{u}(t,\th) 
+a(t)^2 |\xi(\th)|^2 \hat{u}(t,\th) =0, &
  (t,\th)\in \R_+ \times \T^d, 
\\
  \hat{u}(0,\th)=\hat{u}_0(\th),\;\;
  \(\partial_t \hat{u}\)(0,\th)=\hat{u}_1(\th), 
  & \th\in \T^d,
\end{cases}
\end{equation}
where $\hat{u}(t,\th)=\sum_{k} e^{-\i k\cd\th}u(t)[k]$. 

For the solution $\hat{u}(t,\th)$ of \eqref{hu}, we define the energy density function $\cE(t,\th)$ by 
\begin{equation*}
  \cE(t,\th):=
  \left|\pa_t \hat{u}(t,\th)\right|^2
 +a(t)^2 |\xi(\th)|^2 \left|\hat{u}(t,\th)\right|^2.
\end{equation*}
By Lemma \ref{lemm-Parseval}, the total energy $E(t)$ is represented by $\cE(t,\th)$ as follows: 
\begin{lemma}\label{lemm-E-cE}
If $u'(t), D_j^+ u(t)\in l^2(\Z^d)$ $(j=1,\ldots,d)$, 
then the following equality is established: 
\begin{equation}\label{EcE}
  E(t) = \frac{1}{(2\pi)^d}\int_{\T^d} \cE(t,\th)\,d\th.
\end{equation}
\end{lemma}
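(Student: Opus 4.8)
The plan is to decompose $E(t)$ into its kinetic and potential parts and apply Parseval's identity (Lemma~\ref{lemm-Parseval}) to each, converting the symbol of the forward difference operator $D_j^+$ into $\xi_j(\th_j)$ by means of Lemma~\ref{TDFT}. Writing
\begin{equation*}
  E(t)=\sum_{k}\left|u'(t)[k]\right|^2
  +a(t)^2\sum_{j=1}^d\sum_{k}\left|D_j^+u(t)[k]\right|^2,
\end{equation*}
I would treat the two sums separately.

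For the kinetic term, the first step is to note that $\cF_{\Z^d}[u'(t)](\th)=\pa_t\hat u(t,\th)$, i.e.\ that $t$-differentiation commutes with the summation over $k\in\Z^d$. Granting this, Lemma~\ref{lemm-Parseval} applied to $u'(t)\in l^2(\Z^d)$ gives $\sum_k|u'(t)[k]|^2=(2\pi)^{-d}\int_{\T^d}|\pa_t\hat u(t,\th)|^2\,d\th$. For each potential term, Lemma~\ref{lemm-Parseval} applied to $D_j^+u(t)\in l^2(\Z^d)$ followed by \eqref{TDFT2} yields
\begin{equation*}
  \sum_k\left|D_j^+u(t)[k]\right|^2
  =\frac{1}{(2\pi)^d}\int_{\T^d}\left|e^{\i\th_j}-1\right|^2\left|\hat u(t,\th)\right|^2\,d\th;
\end{equation*}
since $\left|e^{\i\th_j}-1\right|^2=2-2\cos\th_j=4\sin^2(\th_j/2)=\xi_j(\th_j)^2$ by \eqref{xith}, summing over $j=1,\dots,d$ produces the factor $\sum_{j=1}^d\xi_j(\th_j)^2=|\xi(\th)|^2$. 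Adding the kinetic and potential contributions and recalling the definition of $\cE(t,\th)$ then gives
\begin{equation*}
  E(t)=\frac{1}{(2\pi)^d}\int_{\T^d}\Big(\left|\pa_t\hat u(t,\th)\right|^2+a(t)^2|\xi(\th)|^2\left|\hat u(t,\th)\right|^2\Big)\,d\th
  =\frac{1}{(2\pi)^d}\int_{\T^d}\cE(t,\th)\,d\th,
\end{equation*}
which is \eqref{EcE}.

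The computation itself is routine; the two places that require a genuine argument, and which I expect to be the main (and only) obstacle, are the justification that $\pa_t$ may be pulled out of the sum $\sum_k$ --- to be handled by a dominated-convergence argument based on the hypothesis $u'(t)\in l^2(\Z^d)$ and the fact that $\hat u$ is the solution of \eqref{hu} --- and the remark that, although \eqref{TDFT2} is stated for $f\in l^1(\Z^d)$, it extends to $f\in l^2(\Z^d)$ by the density of $l^1(\Z^d)\cap l^2(\Z^d)$ in $l^2(\Z^d)$ and the $l^2$-to-$L^2$ continuity of $\cF_{\Z^d}$, so that it is legitimate to apply it to $D_j^+u(t)$.
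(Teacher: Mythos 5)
Your proposal is correct and follows essentially the same route as the paper's own proof: split $E(t)$ into kinetic and potential parts, apply Lemma \ref{lemm-Parseval} to each, and use \eqref{TDFT2} together with $\left|e^{\i\th_j}-1\right|^2=\xi_j(\th_j)^2$ to identify the potential term. The extra care you take about interchanging $\pa_t$ with the sum and extending \eqref{TDFT2} from $l^1$ to $l^2$ is a reasonable refinement that the paper passes over silently.
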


In the Cauchy problem of the wave equation \eqref{wC}, we shall call it a continuous model, not only the total energy $\tE(t)$ but also the energy density $\tcE(t,\xi)$: 
\begin{equation*}
  \tcE(t,\xi):=
  \left|\pa_t \cF_{\R^d}[u](t,\xi)\right|^2
 +a^2 |\xi|^2 \left|\cF_{\R^d}[u](t,\xi)\right|^2
\end{equation*}
is conserved if the propagation speed $a$ is a constant, 
where $\cF_{\R^d}[u](t,\xi)$ denotes the Fourier transformation of $u(t,x)$ with respect to the space variable $x$. 
However, the energy density is not conserved for time dependent propagation speed in general. 
Indeed, time dependent propagation speed has the effect of changing the total energy with respect to $t$, and it considered to be a phenomenon that caused by transition of energy across frequencies. 
Basically, the behavior of $\tcE(t,\xi)$ is determined by the properties of $a(t)$, and the assumptions for $a(t)$ in Theorem \ref{Thm-H07} ensure the estimate $\tcE(t,\xi)\simeq \tcE(0,\xi)$. 
On the other hand, the negative results for \eqref{tGEC} are implied from the unboundedness of $\tcE(t,\xi)$. 
In particular, the non-existence result in the Sobolev space is derived from the increase in the energy in high frequency part which is provided from the non-Lipschitz continuity with very fast oscillation of $a(t)$. 

The energy density $\cE(t,\th)$ for the solution of the semi-discrete model \eqref{hu} is also conserved if the propagation speed $a$ is a constant. 
On the other hand, if the propagation speed is variable, then the estimate 
\begin{equation}\label{GECcE}
  \cE(t,\th) \simeq \cE(0,\th)
\end{equation}
does not hold in general as in the case of continuous model, and thus GEC may not be established. 
It will be natural that the estimate \eqref{GECcE} is established if $a(t)$ satisfies the same assumptions of Theorem \ref{Thm-RS} and Theorem \ref{Thm-H07}. 
However, we may expect to prove \eqref{GECcE} under some weaker assumptions to $a(t)$, because the range of $|\xi|$ for $\tcE(t,\xi)$ is $[0,\infty)$, but the range of $|\xi(\th)|$ for $\cE(t,\xi)$ is $[0,2\sqrt{d}]$. In the other words, unlike the continuous model, the solution cannot have high-frequency energy above a certain level because the solution of semi-discrete model has a finite resolution. 
Here we note that the situation of high-frequency energy for the semi-discrete model is the corresponding to the low-frequency energy for the initial boundary value problem \eqref{wIBV}. 

%
\section{Main results}
%
Let us generalize the properties \eqref{stbc} of (H1) and (H2) by positive and monotone increasing functions $\Th(t)$ and $\Xi(t)$ on $\R_+$ as follows: 

\begin{description}
\item[(H1*)]
There exists a positive constant $a_\infty$ such that the following estimate holds: 
\begin{equation}\label{stb}
  \int^t_0|a(s)-a_\infty|\,ds \le \Th(t).
\end{equation}
\item[(H2*)]
$a \in C^m(\R_+)$ with $m\ge 1$ and the following estimates hold for some positive constants $C_k$: 
\begin{equation}\label{ak}
  \left|a^{(k)}(t)\right| \le C_k \Xi(t)^{-k},\;\; k=1,\ldots,m. 
\end{equation}
\end{description}

For $\Th(t)$ and $\Xi(t)$ we introduce the following conditions corresponding to the conditions $\al\le \be$ and \eqref{albem}: 
\begin{description}
\item[(H3*)] 
There exists a positive constant $C_0$ such that 
\begin{equation}\label{ThXi}
\Th(t) \le C_0 \Xi(t).
\end{equation}
\item[(H4*)]
For $m\ge 2$, $\Xi(t)^{-m}\in L^1(\R_+)$ and the following estimate holds: 
\begin{equation}\label{int_Cm}
\sup_{t\ge 0}\left\{\Th(t)^{m-1}\int_t^\infty \Xi(s)^{-m}\,ds
\right\}<\infty.
\end{equation}
\end{description}

Then our first theorem is given as follows: 

\begin{theorem}\label{Thm1}
For the initial value problem of semi-discrete wave equation \eqref{u}, GEC is established if any of the following 
{\rm(i)} to {\rm (iii)} is satisfied: 
\begin{itemize}
\item[{\rm (i)}]
{\rm (H1*)} with $\sup_{t\ge 0}\{\Th(t)\}<\infty$. 
\item[{\rm (ii)}]
{\rm (H2*)} with $m=1$ and $\Xi(t)^{-1} \in L^1(\R_+)$. 
\item[{\rm (iii)}]
{\rm (H1*)}, {\rm (H2*)}, {\rm (H3*)} and {\rm (H4*)}. 
\end{itemize}
\end{theorem}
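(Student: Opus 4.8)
The plan is to work entirely on the reduced problem \eqref{hu}, parametrized by the single scalar $|\xi(\th)|$ ranging over the compact interval $[0,2\sqrt d]$, and to prove $\cE(t,\th)\simeq\cE(0,\th)$ with constants uniform in $\th$; the conclusion then follows by integrating over $\T^d$ via Lemma \ref{lemm-E-cE}. Write $\om=\om(\th):=|\xi(\th)|\in[0,2\sqrt d]$ and treat the ODE $\pa_t^2\hat u+a(t)^2\om^2\hat u=0$. For $\om=0$ the solution is affine in $t$ and $\cE(t,\th)=|\hat u_1|^2=\cE(0,\th)$ exactly, so the whole argument concerns a \emph{bounded} band of frequencies $\om\in(0,2\sqrt d]$; this is the structural advantage over the continuous model and is what makes (i) and (ii) possible without (H1)/(H2) jointly.

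For case (i), I would compute $\cE'(t,\th)=2a'(t)a(t)\om^2|\hat u(t,\th)|^2\le (2|a'(t)|/a(t))\,\cE(t,\th)$ exactly as in \eqref{tE'}, but instead of estimating $\int|a'|$ directly, integrate by parts in the ``stationary-phase'' or ``diagonalization'' sense: set $b(t):=a(t)-a_\infty$ and track the modified energy $\cE(t,\th)+2a_\infty\om^2\,\mathrm{Re}\big(\overline{\hat u}\,\pa_t\hat u\big)\cdot(\text{correction})$ — more precisely use that, since $\om$ is bounded, the oscillation $e^{\pm i a_\infty\om t}$ can be factored out and the remaining perturbation is driven by $b(t)$, whose primitive $\int_0^t b$ is bounded by $\Th(t)\le\Th(\infty)<\infty$. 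A single integration by parts turns $\int_0^t b(s)(\cdots)\,ds$ into boundary terms of size $\Th(\infty)$ plus a term with $b'=a'$, but one more integration by parts (legitimate since $a\in C^m$, $m\ge1$) or a Gronwall closing on the modified energy yields $\cE(t,\th)\lesssim\cE(0,\th)$ with a constant depending only on $\sup\Th$ and $a_0,a_1,\sqrt d$. Case (ii) is the dual: with $m=1$ and $\Xi^{-1}\in L^1$, \eqref{ak} gives $\int_0^\infty|a'(s)|\,ds\le C_1\int_0^\infty\Xi(s)^{-1}\,ds<\infty$, so $a'\in L^1(\R_+)$ and \eqref{tE-a'L1} (whose proof transcribes verbatim to \eqref{hu}) already gives GEC; this case is essentially immediate.

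The substance is case (iii), which mirrors the proof of Theorem \ref{Thm-H07} but must be carried out with the general weights $\Th,\Xi$ and, crucially, exploiting $\om\le 2\sqrt d$ so that no genuinely-high-frequency regime exists. I would split $(0,2\sqrt d]$ at $t$-dependent thresholds into a ``low-frequency'' zone $\om\,\Xi(t)\le 1$ and an ``oscillatory'' zone $\om\,\Xi(t)\ge 1$ — equivalently, for each fixed $\om$ split the time axis at $t_\om$ defined by $\om\,\Xi(t_\om)=1$ (if it exists). In the oscillatory zone one runs the standard WKB/diagonalization scheme: define the refined energy via the symbol $a(t)\om$, diagonalize the $2\times2$ system, and the off-diagonal remainder after $m$ steps is controlled by $\big(\om\,\Xi(t)\big)^{-m}$ times derivative bounds, whose time-integral over the oscillatory zone is $\lesssim \om^{-m}\int_{t_\om}^\infty\Xi(s)^{-m}\,ds$; here (H4*) enters, bounding this by $\Th(t_\om)^{-(m-1)}\lesssim \om^{m-1}$ (using $\Th\lesssim\Xi$ from (H3*) and $\om\Xi(t_\om)=1$), so the exponent of $\om$ cancels and the bound is uniform. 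In the low-frequency zone $[0,t_\om]$ one uses the crude bound $\cE(t,\th)\le\exp\big(\frac{2}{a_0}\int_0^{t_\om}|a'(s)|\,ds\big)\cE(0,\th)$ together with $\int_0^{t_\om}|a'(s)|\,ds\lesssim\int_0^{t_\om}\Xi(s)^{-1}ds$, and closes it using (H1*)+(H3*): $\int_0^{t_\om}\Xi(s)^{-1}ds$ is controlled via integration by parts against $\Th$ (a primitive of $|a-a_\infty|$) by $\Th(t_\om)\,\Xi(t_\om)^{-1}+\int_0^{t_\om}\Th(s)\,|\Xi^{-1}|'ds\lesssim 1$ since $\Th\lesssim\Xi$, again using $\om\Xi(t_\om)=1$. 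Matching the two zones at $t_\om$ gives the uniform two-sided bound; the lower bound $\cE(t,\th)\gtrsim\cE(0,\th)$ follows by time-reversal symmetry of the argument.

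The main obstacle I anticipate is making the zone-matching and the exponent bookkeeping in (iii) genuinely uniform in $\om$ down to $\om\to0^+$: the threshold $t_\om$ may fail to exist (when $\om\Xi(t)<1$ for all $t$, i.e. very low frequencies), in which case the oscillatory zone is empty and one needs $\int_0^\infty\Xi^{-1}$ or rather the (H4*)-type quantity to still deliver a bound — this is exactly where the interplay of (H3*) ($\Th\lesssim\Xi$) and (H4*) ($\sup_t\Th(t)^{m-1}\int_t^\infty\Xi^{-m}<\infty$) must be shown to force $\Xi^{-1}\in L^1$ near infinity or otherwise handle the tail, so that the crude estimate \eqref{tE-a'L1} applies on all of $\R_+$ for such $\om$. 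Verifying that (H3*)+(H4*) indeed imply the needed summability, and that the constants extracted from the $m$-fold diagonalization depend only on $C_1,\dots,C_m,C_0,a_0,a_1$ and not on $\om$, is the delicate point; everything else is the routine energy-method and Gronwall machinery already visible in the Remarks after Theorems \ref{Thm-RS} and \ref{Thm-H07}.
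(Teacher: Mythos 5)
Your case (ii) is exactly the paper's argument, and your case (i) contains the right germ (a modified energy built from $a_\infty$, controlled by the primitive of $a-a_\infty$), though the clean execution is simply to differentiate $\cE_\infty(t,\xi):=|\pa_t v|^2+a_\infty^2|\xi|^2|v|^2$, which gives $\pa_t\cE_\infty=2\(a_\infty^2-a(t)^2\)|\xi|^2\Re\(\pa_t v\,\ol{v}\)\le \frac{2a_1}{a_0}|a(t)-a_\infty|\,|\xi|\,\cE_\infty$ and closes by Gronwall using only $\int_0^t|a-a_\infty|\le\Th(t)$ and $|\xi|\le 2\sqrt d$; no integration by parts and, importantly, no derivative of $a$ is needed (case (i) assumes none).

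The genuine gap is in case (iii), and it is concrete: your zone boundary is in the wrong place, and the exponent bookkeeping you describe does not close. You split at $\om\,\Xi(t_\om)=1$ and claim that (H4*) plus (H3*) give $\om^{-(m-1)}\int_{t_\om}^\infty\Xi(s)^{-m}\,ds\lesssim \om^{-(m-1)}\Th(t_\om)^{-(m-1)}\lesssim 1$ via $\Th(t_\om)^{-(m-1)}\lesssim\om^{m-1}$. But (H3*) says $\Th\le C_0\Xi$, so $\Th(t_\om)\le C_0/\om$ and hence $\Th(t_\om)^{-(m-1)}\gtrsim\om^{m-1}$ --- the inequality you need points the other way, and nothing in the hypotheses supplies it. The paper's decomposition places the boundary at $\Th(t_\xi)\,|\xi|=N$; then $|\xi|^{-(m-1)}=(\Th(t_\xi)/N)^{m-1}$ and (H4*) bounds $|\xi|^{-(m-1)}\int_{t_\xi}^\infty\Xi(s)^{-m}\,ds$ by $N^{-(m-1)}\sup_t\{\Th(t)^{m-1}\int_t^\infty\Xi^{-m}\}$ with no sign issue, while (H3*) is used for the opposite purpose: to guarantee $|\xi|\,\Xi(t)\ge|\xi|\Th(t)/C_0\ge N/C_0$ throughout the hyperbolic zone, so that the diagonalizer entries $\de_k\in S^{(m-k)}\{-k,k\}$ are uniformly small and the $m$-step procedure is legitimate. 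Your low-frequency zone is also untenable: you propose $\cE(t)\le\exp\(\frac{2}{a_0}\int_0^{t_\om}|a'|\)\cE(0)$ with $\int_0^{t_\om}|a'|\lesssim\int_0^{t_\om}\Xi^{-1}$, but $\Xi^{-1}\notin L^1$ is precisely the situation of case (iii) (e.g.\ $\Xi(t)=(1+t)^\be$, $\be<1$, gives $\int_0^{T}\Xi^{-1}\simeq T^{1-\be}\to\infty$), and the integration by parts ``against $\Th$'' you sketch conflates the primitive of $|a-a_\infty|$ with that of $\Xi^{-1}$. In that zone you must abandon $a'$ entirely and reuse the case (i) energy $\cE_\infty$, which yields $\cE(t,\xi)\le C\exp\(\frac{2a_1}{a_0}\Th(t)|\xi|\)\cE(0,\xi)\le C\exp\(\frac{2a_1 N}{a_0}\)\cE(0,\xi)$ by the very definition of the zone --- this is where (H1*) enters, and it also dissolves your worry about $t_\om$ failing to exist (one may assume $\Th(t)\to\infty$, else case (i) already applies, so $t_\xi$ with $\Th(t_\xi)|\xi|=N$ always exists).
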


If we restrict ourselves $\Th(t)=(1+t)^\al$ and $\Xi(t)=(1+t)^\be$ with $m\be>1$, then \eqref{stb}, \eqref{ak} and \eqref{int_Cm} 
are the same as \eqref{stbc}, \eqref{akc} and \eqref{albem}, respectively. 
Moreover, \eqref{ThXi} is valid if \eqref{albem} holds. 
Comparing the assumptions of $a(t)$ in Theorem \ref{Thm1} with 
Theorem \ref{Thm-H07} we observe the followings: 

\begin{itemize}
\item
If $\al>0$, then GEC is established under the same assumptions for $a(t)$. 
\item
Though GEC does not hold for \eqref{wC} in general if $a(t)$ is not Lipschitz continuous, Theorem \ref{Thm1} concludes GEC without any assumption of the continuity for $a(t)$ if $\al=0$. 
\end{itemize}


\begin{example}\label{Ex1}
Let $\chi \in C^m(\R_+)$ be a positive and periodic function. 
For non-negative constants $p$, $q$ and $r$ we define $a \in C^m(\R_+)$ by 
\begin{equation*}
  a(t)=1 + (1+t)^{-p} \chi\((1+t)^q \(\log(e+t)\)^r\).
\end{equation*}
Then we see the followings: 
\begin{equation*}
  \int^t_0|a(s)-1|\,ds \le \Th(t) \simeq
  \begin{cases}
  1 & (p>1), \\
  \log(e+t) & (p=1), \\
  (1+t)^{-p+1} & (p<1).
  \end{cases}
\end{equation*}
Moreover, for any $k=1,\ldots,m$ we have 
\begin{align*}
  \left|a^{(k)}(t)\right| 
  \lesssim \: & 
  (1+t)^{-p}\((1+t)^{q-1} \(\log(e+t)\)^{r} \)^k
\\
  \le \: & 
  \((1+t)^{\frac{p}{m}-q+1} \(\log(e+t)\)^{-r} \)^{-k}
\end{align*}
for $q>0$ and 
\begin{align*}
  \left|a^{(k)}(t)\right| 
  \lesssim  \: &
  (1+t)^{-p}\((1+t)^{-1} \(\log(e+t)\)^{r-1} \)^k
\\
  \le \: &
  \((1+t)^{\frac{p}{m}+1} \(\log(e+t)\)^{-r+1} \)^{-k}
\end{align*}
for $q=0$. 
Hence we set 
\begin{equation}
  \Xi(t)=\begin{cases}
    (1+t)^{\frac{p}{m}-q+1} \(\log(e+t)\)^{-r} & (q>0),\\
    (1+t)^{\frac{p}{m}+1} \(\log(e+t)\)^{-r+1} & (q=0). 
  \end{cases}
\end{equation}
By Theorem \ref{Thm1}, we have GEC if any of the following holds: 
\begin{itemize}
\item $p>1$ (by (i)). 
\item $m=1$ and $q<p$ (by (ii)). 
\item $m \ge 2$, $0<p=q$ and $r=0$ (by (iii)).
\item $m \ge 2$, $p=q=0$ and $0<r\le 1$ (by (iii)).
\end{itemize}
\end{example}

\begin{example}
Let $\chi_\ve \in C^m(\R_+)$ with $m \ge 2$ be a $2\pi$-periodic function with a positive small parameter $\ve$ satisfying 
$\chi_\ve(\tau)=0$ near $\tau=0$ and 
\begin{equation}
  \sup_{\ve>0}\max_{\tau\in[0,2\pi]}
  \left\{\frac{1}{\ve}
  \left|\frac{d^k}{d\tau^k}\chi_\ve(\tau)\right|\right\}
  <\infty\quad(k=0,1,\ldots,m).
\end{equation}
For a positive large constant $\eta$ and positive constants $\al$, $\be$ and $\ka$ satisfying 
\begin{equation}\label{pqka}
  \al \le \ka \le 1
  \;\text{ and }\;
  \al+\frac{1-\al}{m} \le \be \le \ka+\frac{\ka-\al}{m},
\end{equation}
we define the sequences $\{t_j\}_{j=1}^\infty$, $\{\ve_j\}_{j=1}^\infty$, $\{\rho_j\}_{j=1}^\infty$ and $\{\nu_j\}_{j=1}^\infty$ by 
\begin{equation*}
  t_j:=\eta^j,\;\;
  \ve_j:=t_j^{\al-\ka},\;\;
  \rho_j:=\eta^{-1}t_j^\ka
  \;\text{ and }\;
  \nu_j:=\left[t_j^{-\be+\ka+\frac{\ka-\al}{m}}+1\right]. 
\end{equation*}
Then we define $a(t)$ by 
\begin{equation*}
  a(t):=\begin{cases}
  \sqrt{1+\chi_{\ve_j}\(\nu_j\rho_j^{-1}(t-t_j)\)} 
    & \text{ for } \; t\in [t_j-\rho_j,t_j+\rho_j]\;\; (j=1,2,\ldots),\\
  1 & \text{ for } \; t\in \R_+ \setminus \bigcup_{j=1}^\infty [t_j-\rho_j,t_j+\rho_j].
\end{cases}
\end{equation*}
where $[\;]$ denotes the Gauss symbol. 
Here we note that 
\begin{align*}
  t_j + \rho_j + \rho_{j+1}
= \eta^j \(1 + \eta^{-j(1-\ka) -1} + \eta^{-(j+1)(1-\ka)}\)
\le 3 \eta^j \le \eta^{j+1} = t_{j+1}
\end{align*}
for $\eta \ge 3$, it follows that 
$t_j + \rho_j \le t_{j+1} - \rho_{j+1}$. 
Let $t\in[t_{j-1}+\rho_{j-1},t_j+\rho_j]$. 
Then we have 
\begin{equation}
  \int^{t}_0 |a(s)-1|\,ds
  \lesssim \sum_{k=1}^j \ve_k \rho_k
  =\eta^{-1} \sum_{k=1}^j \eta^{k \al} \simeq t_j^\al
  \simeq (1+t)^\al
\end{equation}
and
\begin{align*}
  \left|a^{(k)}(t)\right|
  \lesssim \ve_j\(\nu_j \rho_j^{-1}\)^k 
  \simeq t_j^{-k\be-(\ka-\al)\(1-\frac{k}{m}\)}
  \le t_j^{-k\be}
  \simeq (1+t)^{-k\be}, 
\end{align*}
it follows that \eqref{stb}, \eqref{ak} and \eqref{ThXi} are established with 
\begin{equation}
  \Th(t) \simeq (1+t)^\al\;\text{ and }\;
  \Xi(t) = (1+t)^\be.
\end{equation}
Moreover, by \eqref{pqka} and noting $\al+(1-\al)/m>1/m$, we have 
$\Xi(t)^{-m}\in L^1(\R_+)$ and 
\begin{align*}
  \Th(t)^{m-1}\int^\infty_t \Xi(s)^{-m}\,ds
  \lesssim (1+t)^{\al(m-1)-m\be+1} \lesssim 1,
\end{align*}
thus \eqref{int_Cm} is established. 
Therefore, $a(t)$ satisfies (iii) of Theorem \ref{Thm1}. 
\end{example}

If $a \not\in C^1(\R_+)$, then \eqref{wC} is not $L^2$ well-posed in general, hence the energy estimate $\tE(t) \lesssim \tE(0)$ cannot be expected. 
However, $\tE(t)$ is not necessarily unbounded if \eqref{wC} is not $L^2$ well-posed. 
For example, if $a(t)$ is a H\"older continuous function, then \eqref{wC} is not $L^2$ well-posed in general but the Gevrey well-posed. 
That is, if the initial data are functions of the Gevrey class of suitable order, then the solution is a function of the Gevrey class, too; 
hence $\tE(t)$ is bounded (see \cite{CDS}). 
If $a(t)$ does not satisfy \eqref{albem}, then GEC does not hold in general. 
However, if the initial data is a function of the Gevrey class of suitable order and \eqref{stbc-} holds, then there exists a positive constant $C(u_0,u_1)$, which depends on not only the initial energy $\tE(0)$ but also a norm of the initial data in the Gevrey class, such that the total energy $\tE(t)$ is bounded as follows (see \cite{EFH}): 
\begin{equation}\label{tBE}
  \tE(t) \le C(u_0,u_1). 
\end{equation}

Faster oscillation of $a(t)$, which is described as smaller $\be$ not to satisfy \eqref{albem}, may increase the high frequency part of the energy for the solution of \eqref{wC} and cause GEC to collapse. 
However, since the high frequency part of the initial energy is small with the initial data in the Gevrey class, $\tE(t)$ can stay bounded against the effect from the faster oscillation of $a(t)$. 
Our second theorem concludes a corresponding estimate to \eqref{tBE} for the solution of the semi-discrete wave equation \eqref{u} under some weaker assumptions than that for $a(t)$ in Theorem \ref{Thm1}. 


For a positive and strictly increasing function $\La(t)$ on $\R_+$ satisfying $\lim_{t\to\infty}\La(t)=\infty$ such that 
\begin{equation}\label{La-infty}
  \frac{\Th(t)}{\La(t)}
  \text{ is monotone increasing and }
  \lim_{t\to\infty}\frac{\Th(t)}{\La(t)}=\infty,
\end{equation}
we introduce the following conditions that are alternative to (H3*) and (H4*): 
\begin{description}
\item[(H5*)]
There exists a positive constant $C_0$ such that 
\begin{equation*}
  \La(t)\le C_0 \Xi(t). 
\end{equation*}
\item[(H6*)]
For $m\ge 2$, $\Xi(t)^{-m}\in L^1(\R_+)$ and the following estimate holds:  
\begin{equation*}
  \sup_{t\ge 0}\left\{\La(t)^m \Th(t)^{-1}\int^\infty_t \Xi(s)^{-m}\,ds
  \right\}<\infty.
\end{equation*}
\end{description}

Here we note that if $\La(t) \simeq \Th(t)$, then (H5*) and (H6*) are coincide with (H3*) and (H4*), respectively. 
On the other hand, it is possible that (H6*) holds, but (H4*) does not hold if \eqref{La-infty} is valid. 
Our second theorem is given as follows: 

\begin{theorem}\label{Thm2}
Let $m\ge 2$. 
If $\Th(t)$, $\Xi(t)$ and $\La(t)$ satisfy {\rm (H1*)}, {\rm (H2*)}, \eqref{La-infty}, {\rm (H5*)} and {\rm (H6*)}, then there exists a positive constant $N_0$ such that the following estimate is established: 
\begin{equation*}
  \cE(t,\th) \lesssim 
  \exp\(2|\xi(\th)| \Th\(\La^{-1}\(\frac{N_0}{|\xi(\th)|}\)\)\)
  \cE(0,\th). 
\end{equation*}
Consequently, denoting 
\begin{equation*}
  U(N,u_0,u_1):=
  \int_{\T^d}
  \exp\(2|\xi(\th)| \Th\(\La^{-1}\(\frac{N}{|\xi(\th)|}\)\)\)
  \cE(0,\th)
  \,d\th
\end{equation*}
for $N>0$, we have the followings: 
\begin{itemize}
\item[{\rm (i)}] 
If $U(N,u_0,u_1)<\infty$ holds for any $N\ge 1$, then there exists a positiv constant $N_0$ such that the energy estimate 
\begin{equation}\label{Ebdd}
  E(t) \lesssim U(N_0,u_0,u_1)
\end{equation} 
is established. 
\item[{\rm (ii)}] 
There exists a positive constant $N_0$, and the energy estimate
\eqref{Ebdd} is established for any $(u_0,u_1)$ satisfying 
$U(N_0,u_0,u_1)<\infty$. 
\end{itemize}
%
%
%
\end{theorem}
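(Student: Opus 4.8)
The plan is to reduce everything to the family of scalar ODEs \eqref{hu}, fix $\theta\in\T^d$, write $r:=|\xi(\theta)|\in[0,2\sqrt d]$, and run a two‑zone argument in $t$. If $r=0$ (i.e. $\theta=0$) then $\partial_t^2\hat u=0$, so $\cE(t,\theta)\equiv\cE(0,\theta)$ and there is nothing to prove; so assume $r>0$. I would fix a large constant $N$ — large enough that $N>2\sqrt d\,\La(0)$ and that the diagonalization below closes — set $t_\xi:=\La^{-1}(N/r)$ (well defined and positive by \eqref{La-infty} and the choice of $N$), and split $\R_+=[0,t_\xi]\cup[t_\xi,\infty)$. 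On the \emph{slow} zone $[0,t_\xi]$ one has $r\La(t)\le N$; on the \emph{hyperbolic} zone $[t_\xi,\infty)$ one has $r\La(t)\ge N$, hence by (H5*) also $r\,\Xi(t)\ge N/C_0$.

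On the slow zone I would not use any oscillation of $a$. After replacing $a_\infty$ by $\min\{a_1,\max\{a_0,a_\infty\}\}$ — which only makes (H1*) easier, so WLOG $a_\infty\in[a_0,a_1]$ — I would write $a(t)^2=a_\infty^2+(a(t)^2-a_\infty^2)$ and treat $-(a(t)^2-a_\infty^2)r^2\hat u$ as a source for the constant‑speed operator $\partial_t^2+a_\infty^2 r^2$, whose energy $\cE_\infty(t,\theta):=|\partial_t\hat u|^2+a_\infty^2 r^2|\hat u|^2$ is conserved by the free flow. The usual Duhamel/Gronwall estimate (using $\tfrac{d}{dt}\sqrt{\cE_\infty}\le|\text{source}|$ together with $r|\hat u(t,\theta)|\le a_\infty^{-1}\sqrt{\cE_\infty(t,\theta)}$) gives $\cE_\infty(t,\theta)\lesssim\exp\!\big(c\,r\int_0^t|a(s)-a_\infty|\,ds\big)\cE_\infty(0,\theta)$ with $c$ depending only on $a_0,a_1,a_\infty$; by (H1*), monotonicity of $\Th$, and $\cE\simeq\cE_\infty$ uniformly, this yields
\[
\cE(t,\theta)\lesssim\exp\!\big(c\,r\,\Th(t_\xi)\big)\,\cE(0,\theta),\qquad 0\le t\le t_\xi .
\]

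On the hyperbolic zone I would pass to the first‑order system for $(a(t)r\hat u,\partial_t\hat u)$ (whose Euclidean norm squared is $\cE$), diagonalize the principal part (eigenvalues $\pm i a(t)r$), remove the scalar diagonal term $a'(t)/(2a(t))$ — an exact logarithmic derivative, contributing only a bounded factor — by dividing by $\sqrt{a(t)}$, and then perform $m-1$ steps of the standard refined diagonalization. In this zone $a(t)r\,\Xi(t)\gtrsim N/C_0$, so each conjugating matrix $I+N_k(t)$ has $\|N_k(t)\|\lesssim(r\Xi(t))^{-k}\le(C_0/N)^k\le\tfrac12$ for $N$ large, hence is invertible with uniformly bounded inverse and preserves the equivalence of the transformed energy with $\cE$; moreover the diagonal corrections produced at every step are purely imaginary, hence do not affect the (real) energy growth. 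After $m-1$ steps the off‑diagonal remainder obeys $\|\mathcal R(t)\|\lesssim|a^{(m)}(t)|\,r^{-(m-1)}\lesssim\Xi(t)^{-m}r^{-(m-1)}$ by (H2*), so Gronwall gives $\cE(t,\theta)\lesssim\exp\!\big(2r^{-(m-1)}\!\int_{t_\xi}^\infty\Xi(s)^{-m}\,ds\big)\cE(t_\xi,\theta)$, and by (H6*) with $r=N/\La(t_\xi)$ the exponent is $\lesssim N^{-m}r\,\Th(t_\xi)$. Combining the two zones, $\cE(t,\theta)\lesssim\exp\!\big((c+C_*N^{-m})\,r\,\Th(t_\xi)\big)\cE(0,\theta)$ uniformly in $t$. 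Finally, \eqref{La-infty} makes $\Th/\La$ monotone increasing, hence $s\mapsto s^{-1}\Th(\La^{-1}(s))$ is increasing, so $\Th(\La^{-1}(N_0/r))\ge(N_0/N)\Th(\La^{-1}(N/r))$ for $N_0\ge N$; choosing first $N$ so that the above machinery works and then $N_0\ge N$ so large that $(c+C_*N^{-m})N\le 2N_0$ gives
\[
\cE(t,\theta)\lesssim\exp\!\Big(2|\xi(\theta)|\,\Th\big(\La^{-1}(N_0/|\xi(\theta)|)\big)\Big)\,\cE(0,\theta),\qquad t\in\R_+ ,
\]
which is the asserted estimate. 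Integrating over $\theta$ via Lemma \ref{lemm-E-cE} gives $E(t)\lesssim U(N_0,u_0,u_1)$, which is (ii) for this $N_0$; and (i) follows at once, since we may take $N_0\ge1$ and $U(N,u_0,u_1)<\infty$ for every $N\ge1$ is assumed.

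I expect the main obstacle to be the bookkeeping of the $(m-1)$‑fold diagonalization on the hyperbolic zone: verifying that the one diagonal term without integrable decay, $a'(t)/(2a(t))$, is an exact derivative and is removed, that all further diagonal corrections stay purely imaginary (hence harmless for the energy), and that the remainder carried through $m-1$ steps lands exactly on $\|\mathcal R(t)\|\lesssim\Xi(t)^{-m}r^{-(m-1)}$ uniformly on $[t_\xi,\infty)$. A secondary delicate point is the optimization of the exponential coefficient — which is not absorbed by $\lesssim$ — where the monotonicity in \eqref{La-infty} and the trade‑off $N_0\ge N$ are used to pass from the crude constant $c+C_*N^{-m}$ to the stated coefficient $2$.
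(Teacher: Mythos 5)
Your proposal follows essentially the same route as the paper: the zone boundary $\La(t)|\xi|=N$, the $\cE_\infty$/Gronwall estimate via (H1*) in the slow zone, the $(m-1)$-fold refined diagonalization with (H5*) guaranteeing smallness of the conjugators and (H6*) controlling $\int_{t_\xi}^\infty|r_m(s,\xi)|\,ds\lesssim N^{-m}|\xi|\Th(t_\xi)$ in the hyperbolic zone, and the monotonicity of $\mu$ to trade the crude constant for the coefficient $2$ at a larger $N_0$, followed by integration in $\th$ to get (i) and (ii). One small correction: the diagonal corrections in Lemma \ref{ref_diag} are not purely imaginary --- their real parts are $-\tfrac12\pa_t\log\(1-|\de_k|^2\)$ --- but since $|\de_k|^2\le\tfrac12$ these exact derivatives integrate to bounded factors, exactly as you treat $a'/(2a)$, so your argument is unaffected.
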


\begin{remark}
Denoting $t=\La^{-1}(r^{-1})$ for $r>0$, we have 
\begin{equation}\label{La-infty2}
  \mu(r):=r \Th\(\La^{-1}\(\frac{1}{r}\)\)
  =\frac{\Th(t)}{\La(t)}
  \nearrow \infty \quad (r \to +0)
\end{equation}
by \eqref{La-infty} since $\La^{-1}(r^{-1})$ is positive and strictly decreasing with respect to $r$. 
It follows that 
\begin{equation*}
  \lim_{\th \to 0}
  |\xi(\th)|\Th\(\La^{-1}\(\frac{N}{|\xi(\th)|}\)\)
 =\lim_{|\xi|\to 0}N\mu\(\frac{N}{|\xi|}\) = \infty. 
\end{equation*}
On the other hand, if $\Th(t) \lesssim \La(t)$, then we have
\[
  \sup_{\th\in\T^d\setminus\{0\}}\left\{
  |\xi(\th)|\Th\(\La^{-1}\(\frac{N}{|\xi(\th)|}\)\)
  \right\}<\infty, 
\]
it follows that $U(N,u_0,u_1) \simeq E(0)$ by Lemma \ref{lemm-E-cE}. 
Therefore, \eqref{La-infty} is a reasonable assumption for the case that Theorem \ref{Thm1} cannot be applied. 
\end{remark}

Since \eqref{La-infty} provides 
\[
  \lim_{|\th| \to 0} |\xi(\th)|\Th\(\La^{-1}\(\frac{1}{|\xi(\th)|}\)\)=\infty, 
\]
the condition
\begin{equation}\label{est_Thm2}
U(N,u_0,u_1)<\infty
\end{equation}
in Theorem \ref{Thm2} requires approximately that the $|\xi(\th)|\hat{u}_0(\th)$ and $\hat{u}_1(\th)$ degenerate at $\th=0$ in an appropriate order which is determined by $\Th(t)$ and $\Xi(t)$. 
The following examples of $\Th(t)$, $\Xi(t)$ and $u_1$ provide \eqref{est_Thm2} for $d=1$ and $u_0=0$. 

\begin{example}\label{Ex1Thm2}
Let $a(t)$ be defined in Example \ref{Ex1} with $m\ge 2$, $p=q=0$, 
$r \ge 1$ and $|\chi| \le 1$. 
Then we have 
\begin{equation*}
  \Th(t) = 1+t, \;\;
  \Xi(t) \simeq (1+t)\(\log(e+t)\)^{-r+1}, 
\end{equation*}
and (H4*) does not hold. 
Let us define $\La(t)$ by 
\begin{align*}
  \La(t):= \: & (1+t)\(\log(e+t)\)^{-r+1}
 = \(\frac{1+t}{(1+t)^{-m+1}\(\log(e+t)\)^{m(r-1)}}\)^{\frac{1}{m}}
\\
 \simeq \: &  \(\frac{\Th(t)}{\int^\infty_t \Xi(s)^{-m}\,ds}\)^{\frac{1}{m}}. 
\end{align*}
Then \eqref{La-infty}, (H5*) are (H6*) are valid. 
Noting that $\La^{-1}(\tau) \simeq \tau (\log \tau)^{r-1}$ for $\tau\ge e$, 
there exists a positive constant $M$ such that 
\begin{align*}
  \exp\(2 |\xi(\th)| \Th\(\La^{-1}\(\frac{N}{|\xi(\th)|}\)\)\)
  \lesssim
  \exp\(2M N \(\log\frac{N}{|\xi(\th)|}\)^{r-1}\)
\end{align*}
for any $N\ge 2e$ on $\T\setminus\{0\}$. 
For $M_0>0$, we define $u_1=\{u_1[k]\}_{k\in \Z}$ by 
\begin{equation*}
  u_1[k]:=\frac{1}{2\pi}\int_\T
  \(\sin^2 \frac{\th}{2}\)^{\frac{M_0}{2}} e^{\i k \th}\,d\th, 
\end{equation*}
it follows that
\begin{align*}
  \cE(0,\th)
 =\(\sin^2 \frac{\th}{2}\)^{M_0}
 =\(\frac{N}{2}\)^{2M_0} 
  \exp\(-2M_0\log \frac{N}{|\xi(\th)|} \). 
\end{align*}
Then we have 
\begin{align*}
  U(N,u_0,u_1)
  \lesssim \int_\T 
  \exp\(2M N\(\log\frac{N}{|\xi(\th)|}\)^{r-1}
       -2M_0\log \frac{N}{|\xi(\th)|} \)
  \,d\th.
\end{align*}
If $1<r<2$ then $U(N,u_0,u_1)<\infty$ for any $N\ge 2e$, hence Theorem \ref{Thm2} (i) can be applied. 
If $r=1$ then $U(N_0,u_0,u_1)<\infty$ for $M_0 \ge MN_0$, hence Theorem \ref{Thm2} (ii) can be applied if $M_0$ is large enough. 
\end{example}

\begin{example}\label{Ex2Thm2}
Let $a(t)$ be defined in Example \ref{Ex1} with $m \ge 1$, $0\le p<q<1$ and $r=0$. 
Then we have 
\begin{equation*}
  \Th(t)\simeq (1+t)^{1-p},\;\;
  \Xi(t) \simeq (1+t)^{\frac{p}{m}-q+1}, 
\end{equation*}
and (H4*) dose not hold. 
Let us define $\La(t)$ by 
\begin{equation*}
  \La(t):=(1+t)^{1-q}
 =\(\frac{(1+t)^{1-p}}{(1+t)^{-p-m(1-q)+1}}\)^{\frac{1}{m}}
 \simeq \(\frac{\Th(t)}{\int^\infty_t \Xi(s)^{-m}\,ds}\)^{\frac{1}{m}}. 
\end{equation*}
Then \eqref{La-infty}, (H5*) are (H6*) are valid. 
Noting that $\Th(\La^{-1}(\tau)) \simeq \tau^{(1-p)/(1-q)}$ for $\tau\ge 1$, there exists a positive constant $M_0$ such that
\begin{align*}
  |\xi(\th)| \Th\(\La^{-1}\(\frac{N}{|\xi(\th)|}\)\)
  \le
  M_0 N^{\frac{1-p}{1-q}}|\xi(\th)|^{-\frac{q-p}{1-q}}
\end{align*}
for any $N\ge 2$ on $\T\setminus\{0\}$. 
For $\rho>0$ and $\ka \ge (q-p)/(1-q)$ we define $u_1=\{u_1[k]\}_{k\in \Z}$ by 
\begin{equation*}
  u_1[k]:=\frac{1}{2\pi}\int_{\T}
  \exp\(-\rho\(\sin^2\frac{\th}{2}\)^{-\frac{\ka}{2}}\)
  e^{\i k \th}\,d\th,
\end{equation*}
it follows that
\begin{align*}
  \cE(0,\th)
 =\exp\(-2 \rho \(\sin^2 \frac{\th}{2}\)^{-\frac{\ka}{2}}\)
 =\exp\(-2^{\ka+1}\rho|\xi(\th)|^{-\ka}\). 
\end{align*}
Then we have 
\begin{align*}
  U(N,u_0,u_1)
  \le
  \int_\T \exp\(
  -2|\xi(\th)|^{-\ka}\(
  2^{\ka}\rho-M_0 N^{\frac{1-p}{1-q}}|\xi(\th)|^{\ka-\frac{q-p}{1-q}}
  \)\)\,d\th.
\end{align*}
If $\ka > (q-p)/(1-q)$, then $U(N,u_0,u_1)<\infty$ for any large $N$, hence Theorem \ref{Thm2} (i) can be applied. 
If $\ka = (q-p)/(1-q)$ then $U(N_0,u_0,u_1)<\infty$ for 
$\rho \ge 2^{-\ka}M_0 N_0^{(1-p)/(1-q)}$, hence Theorem \ref{Thm2} (ii) can be applied if $\rho$ is large enough. 
\end{example}

By Theorem \ref{Thm2}, Example \ref{Ex2-Thm3} and Lemma \ref{Lemma1-Thm3} we have the following corollary: 
\begin{corollary}
For $\nu>1$ and $\rho>0$ we define the Gevrey classes $\ga_\rho^\nu$ and $\ga_\infty^\nu$ by 
\begin{equation*}
  \ga^\nu_\rho:=\left\{f\in C^\infty\(\T^d\)\;;\;
\sup_{\th\in\T^d}\left\{\left|\pa_\th^\al f(\th)\right|
  \frac{\rho^{|\al|}}{\al!^\nu}\right\}<\infty\right\},
\;\;
  \ga_\infty^\nu:=\bigcup_{\rho>0} \ga_\rho^\nu,
\end{equation*}
respectively. 
Let {\rm (H1*)} and {\rm (H2*)} hold for 
$\Th(t)\lesssim (1+t)^{1-p}$ and $\Xi(t)=(1+t)^{\frac{p}{m}-q+1}$ 
with $m\ge 1$ and $0\le p < q<1$. 
\begin{itemize}
\item[{\rm (i)}]
If $\xi_j \hat{u}_0,\, \hat{u}_1 \in \ga_\infty^\nu$ 
$(j=1,\ldots,d)$ with 
$\nu<(1-p)/(q-p)$ and 
\begin{equation}\label{eq1-cor}
  \(\pa_\th^\al \xi_j \hat{u}_0\)(0) = 0\;\;(j=1,\ldots,d),\;\;
  \(\pa_\th^\al \hat{u}_1\)(0) = 0,\;\;
  \al\in \N_0^d,
\end{equation}
then there exists a positive constant $N_0$ such that $U(N_0,u_0,u_1)<\infty$ and 
\eqref{Ebdd} is established. 
\item[{\rm (ii)}] 
There exist positive constants $\rho$ and $N_0$ such that 
for any $\xi_j \hat{u}_0,\, \hat{u}_1 \in \ga_\rho^\nu$ 
$(j=1,\ldots,d)$ with $\nu=(1-p)/(q-p)$ satisfying \eqref{eq1-cor}, 
$U(N_0,u_0,u_1)<\infty$ and \eqref{Ebdd} is established.  
\end{itemize}
\end{corollary}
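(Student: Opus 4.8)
The plan is to deduce the corollary from Theorem \ref{Thm2} by turning its abstract hypothesis $U(N,u_0,u_1)<\infty$ into a concrete decay condition on the Cauchy data. First I would fix the auxiliary function exactly as in Example \ref{Ex2Thm2}, namely $\La(t):=(1+t)^{1-q}$, and verify \eqref{La-infty}, {\rm (H5*)} and {\rm (H6*)} for the present $\Th(t)\lesssim(1+t)^{1-p}$ and $\Xi(t)=(1+t)^{\frac{p}{m}-q+1}$; this is precisely the computation of Example \ref{Ex2Thm2}. That computation also yields $\Th\(\La^{-1}(\tau)\)\simeq\tau^{\frac{1-p}{1-q}}$, hence a constant $M_0>0$ with
\[
  |\xi(\th)|\,\Th\!\(\La^{-1}\!\(\tfrac{N}{|\xi(\th)|}\)\)
  \le M_0\,N^{\frac{1-p}{1-q}}\,|\xi(\th)|^{-\beta_+},
  \qquad \th\in\T^d\setminus\{0\},\ N\ge 2,
\]
where $\beta_+:=\frac{q-p}{1-q}$. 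Since $2\sin(\th_j/2)\simeq\th_j$ gives $|\xi(\th)|\simeq|\th|$ near $\th=0$, the exponential weight in $U(N,u_0,u_1)$ is bounded near the origin by $\exp\(C\,N^{\frac{1-p}{1-q}}|\th|^{-\beta_+}\)$ with $C$ independent of $N$ and $\th$.

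Second, I would quantify the decay of $\cE(0,\th)$ at $\th=0$. The elementary fact needed is that if $g\in\ga^\nu_\rho(\T^d)$ with $\(\pa_\th^\al g\)(0)=0$ for all $\al\in\N_0^d$, then Taylor's formula with remainder of order $n$, the Gevrey bound $|\pa_\th^\al g(\th)|\le C\rho^{-|\al|}\al!^\nu$, and the choice $n\simeq(\rho/|\th|)^{\frac{1}{\nu-1}}$ give
\[
  |g(\th)|\lesssim\exp\!\(-c\,\rho^{\frac{1}{\nu-1}}\,|\th|^{-\beta_-}\),
  \qquad \beta_-:=\frac{1}{\nu-1},
\]
for $\th$ near $0$, with $c>0$ depending only on $\nu$ and $d$. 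Applying this to $\hat{u}_1$ and to each $\xi_j\hat{u}_0$ when $\xi_j\hat{u}_0,\hat{u}_1\in\ga^\nu_\rho$ satisfy \eqref{eq1-cor}, and using $\cE(0,\th)=|\hat{u}_1(\th)|^2+a(0)^2\sum_{j=1}^d|\xi_j(\th_j)\hat{u}_0(\th)|^2$, one obtains $\cE(0,\th)\lesssim\exp\(-2c\,\rho^{\frac{1}{\nu-1}}|\th|^{-\beta_-}\)$ near $0$.

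Third, I would compare the two exponents of $|\th|$. For {\rm (i)}, the hypothesis $\nu<\frac{1-p}{q-p}$ is equivalent to $\beta_->\beta_+$; since $\xi_j\hat{u}_0,\hat{u}_1$ lie in a common $\ga^\nu_\rho$ for some $\rho>0$, the integrand of $U(N,u_0,u_1)$ is $\lesssim\exp\(C\,N^{\frac{1-p}{1-q}}|\th|^{-\beta_+}-2c\,\rho^{\frac{1}{\nu-1}}|\th|^{-\beta_-}\)$ near $\th=0$, which tends to $0$ there for every $N$ (the $\beta_-$-term dominates), and is continuous, hence bounded, on the rest of the compact set $\T^d$; therefore $U(N,u_0,u_1)<\infty$ for all $N\ge 1$, and Theorem \ref{Thm2} {\rm (i)} gives \eqref{Ebdd}. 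For {\rm (ii)}, $\nu=\frac{1-p}{q-p}$ gives $\beta_-=\beta_+$; I would first invoke Theorem \ref{Thm2} {\rm (ii)} to obtain the constant $N_0>0$, and then choose $\rho$ so large that $c\,\rho^{\frac{1}{\nu-1}}>C\,N_0^{\frac{1-p}{1-q}}$, which is possible because $c\,\rho^{\frac{1}{\nu-1}}\to\infty$ as $\rho\to\infty$. With this $\rho$, for every $(u_0,u_1)$ with $\xi_j\hat{u}_0,\hat{u}_1\in\ga^\nu_\rho$ satisfying \eqref{eq1-cor} the integrand of $U(N_0,u_0,u_1)$ is bounded near $0$, so $U(N_0,u_0,u_1)<\infty$ and Theorem \ref{Thm2} {\rm (ii)} again gives \eqref{Ebdd}.

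The main obstacle, apart from bookkeeping, is the quantitative Gevrey decay estimate and the precise dependence of its rate on the radius $\rho$: it is essential that this rate be unbounded as $\rho\to\infty$, so that in {\rm (ii)} it can dominate the fixed weight constant $C\,N_0^{\frac{1-p}{1-q}}$, respecting the order of the quantifiers in Theorem \ref{Thm2} {\rm (ii)} (first $N_0$, then $\rho$). A secondary point requiring care is that the equivalence $|\xi(\th)|\simeq|\th|$ be made uniform on $\T^d$, so that the exponents $\beta_\pm$ themselves, and not merely their orders of magnitude, are compared correctly.
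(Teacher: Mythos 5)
Your proposal is correct and follows essentially the same route as the paper, which obtains the corollary from Theorem \ref{Thm2} together with Lemma \ref{Lemma1-Thm3} (your Taylor-with-optimal-truncation step, i.e.\ the case $M_j=j!^\nu$ of the associated-function bound) and the exponent comparison of Examples \ref{Ex2Thm2} and \ref{Ex2-Thm3}, including the same quantifier order in part (ii) (first $N_0$ from Theorem \ref{Thm2} (ii), then $\rho$ large). The only cosmetic difference is that you evaluate the Gevrey decay rate directly rather than phrasing it through $T[\{M_j/j!\}]$ and the lower bound quoted from \cite{M}.
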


Let us consider the conditions for \eqref{est_Thm2} to the initial data $u_0$ and $u_1$ themselves instead of $\hat{u}_0$ and $\hat{u}_1$. 
In order to describe the conditions, we introduce the logarithmic convexity and the associated functions for sequences.
The sequence of positive real numbers $\{M_j\}=\{M_j\}_{j=0}^\infty$ is called logarithmically convex if the following estimate holds: 
\begin{equation*}
  \frac{M_{j}}{M_{j-1}} \le \frac{M_{j+1}}{M_{j}},\;\;
  j\in \N.
\end{equation*}
For a logarithmically convex sequence $\{M_j\}$, we define the associated function $T[\{M_j\}](\tau)$ on $\R_+$ by 
\begin{equation*}
  T[\{M_j\}](\tau):=\sup_{j\in \N}\left\{\frac{\tau^j}{M_j}\right\}. 
\end{equation*}
Then our third theorem is given as follows: 

\begin{theorem}\label{Thm3}
Let $\Th(t)$, $\Xi(t)$ and $\La(t)$ satisfy the same assumptions in Theorem \ref{Thm3}, and $\{M_j\}$ be a logarithmically convex sequence 
satisfying 
\begin{equation}\label{eq1-Thm3}
  L\(N,\{M_j\}\):=
  \inf_{\tau \ge 1}\left\{
  \frac{T\left[\left\{\frac{M_j}{j!}\right\}\right](\tau)}
       {\exp\(\tau^{-1}\Th\(\La^{-1}(N \tau\)\)}
  \right\}>0
\end{equation}
for a positive real number $N$. 
\begin{itemize}
\item[{\rm (i)}] 
If $L(N,\{M_j\})>0$ for any $N>0$, and $(u_0,u_1)$ satisfies 
\begin{equation}\label{eq2-Thm3}
  \sum_{k\in \Z^d} k^{\al} D_j^+ u_0[k]=0
  \;\;(j=1,\ldots,d), \;\;
  \sum_{k\in \Z^d} k^\al u_1[k]=0, \;\;
  \al\in \N_0^d:=(\N\cup\{0\})^d
\end{equation}
and 
\begin{equation}\label{eq3-Thm3}
  \sup_{k\in \N^d}\left\{
  \(\sum_{j=1}^d|D_j^+ u_0[k]|+ |u_1[k]|\)|k|^{d+1} T[\{M_j\}]\(\rho^{-1}|k|\)
  \right\} <\infty 
\end{equation}
for a positive constant $\rho$, 
then there exists a positive constant $N_0$ such that 
$U(N_0,u_0,u_1)<\infty$ and the energy estimate \eqref{Ebdd} is established. 
\item[{\rm (ii)}]
If the following estimate holds: 
\begin{equation}\label{eq4-Thm3}
  \lim_{N\to\infty} \inf_{\tau\ge \frac{1}{2\sqrt{d}}}
  \left\{\frac{\Th\(\La^{-1}(N\tau)\)}{N\Th\(\La^{-1}(\tau)\)}\right\}
  =\infty,
\end{equation}
then there exist positive constants $\rho$ and $N_0$ such that 
for any $(u_0,u_1)$ satisfying \eqref{eq2-Thm3} and \eqref{eq3-Thm3}, 
$U(N_0,u_0,u_1)<\infty$ and \eqref{Ebdd} is established.  
\end{itemize}
\end{theorem}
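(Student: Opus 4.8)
The plan is to reduce the statement to Theorem \ref{Thm2} by verifying that the hypotheses \eqref{eq2-Thm3} and \eqref{eq3-Thm3} on the sequences $\{D_j^+ u_0[k]\}$ and $\{u_1[k]\}$ force the finiteness of $U(N_0,u_0,u_1)$ for a suitable $N_0$. The whole point is that $U(N,u_0,u_1)$ is an integral over $\T^d$ of the product of the exponential weight $\exp\bigl(2|\xi(\th)|\Th(\La^{-1}(N/|\xi(\th)|))\bigr)$ with $\cE(0,\th)$, and by \eqref{La-infty2} this weight blows up as $\th\to 0$; the conditions on the initial data are precisely designed to make $\cE(0,\th)$ vanish fast enough near $\th=0$ to compensate. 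So the first step is to translate the hypotheses on the coefficient sequences into a pointwise bound on $\cE(0,\th)=|\hat u_1(\th)|^2 + a_0^2|\xi(\th)|^2|\hat u_0(\th)|^2$ near $\th=0$ (after also controlling $\hat u_1(\th)$ and $\xi_j(\th)\hat u_0(\th)$ on the rest of $\T^d$, which is harmless since everything is bounded away from $0$).

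The key step is therefore an estimate of the form: under \eqref{eq2-Thm3} and \eqref{eq3-Thm3}, one has $|\hat u_1(\th)| + \sum_{j=1}^d|\xi_j(\th)\hat u_0(\th)| \lesssim T[\{M_j\}](\rho^{-1}/|\xi(\th)|)^{-1}$ or, equivalently, a bound in terms of the associated function $T[\{M_j/j!\}]$. This is where I expect the main work to lie. The mechanism is: \eqref{eq2-Thm3} says all derivatives at $\th=0$ of $\hat u_1$ and of each $\xi_j\hat u_0$ vanish (these derivatives are the moments $\sum_k k^\al u_1[k]$ etc., which converge absolutely thanks to the decay built into \eqref{eq3-Thm3}), so by Taylor's formula with integral remainder $\hat u_1(\th)$ is, for every $n$, $O(|\th|^n)$ with a constant governed by the $n$-th derivative, i.e. by $\sum_k |k|^n |u_1[k]|$; and \eqref{eq3-Thm3} bounds $\sum_k |k|^n|u_1[k]|$ by (roughly) $\rho^n M_n$ up to a convergent factor $\sum_k |k|^{-d-1}$. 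Optimizing over $n$ converts the family of polynomial bounds $|\th|^n \rho^n M_n$ into the single bound $T[\{M_j\}](\rho|\th|)^{-1}$, and since $|\xi(\th)|\simeq|\th|$ near $0$ this is what we want. The logarithmic convexity of $\{M_j\}$ is exactly the hypothesis that makes this $\inf_n$/associated-function manipulation legitimate.

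Once the pointwise decay of $\cE(0,\th)$ is in hand, part (i) follows by plugging into the integrand of $U(N_0,u_0,u_1)$: the ratio of $\cE(0,\th)$ to the exponential weight is controlled by $L(N_0,\{M_j\})$ from \eqref{eq1-Thm3} (after the change of variable $\tau = |\xi(\th)|^{-1}$, noting $\tau\ge 1/(2\sqrt d)$ on $\T^d$, and using that $L(N,\{M_j\})>0$ for all $N$ by assumption), so the integrand is bounded and $U(N_0,u_0,u_1)<\infty$; then Theorem \ref{Thm2}(i) gives \eqref{Ebdd}. For part (ii) one is only allowed to fix a single $\rho$ and a single $N_0$, so instead of \eqref{eq1-Thm3} one exploits \eqref{eq4-Thm3}: it guarantees that, by taking $N$ large, the exponent $N\Th(\La^{-1}(\tau))$ coming from the data can be made to dominate $\Th(\La^{-1}(N\tau))$ uniformly in $\tau\ge 1/(2\sqrt d)$, so a fixed-radius Gevrey-type bound on the data (fixed $\rho$) still beats the weight for suitably large $N_0$; this yields $U(N_0,u_0,u_1)<\infty$ and then Theorem \ref{Thm2}(ii) finishes the proof. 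The main obstacle is the second step — carrying out the Taylor-remainder-plus-optimization estimate cleanly, in particular handling the $d$-dimensional multi-index bookkeeping and making sure the remainder constants are genuinely controlled by the moment sums appearing in \eqref{eq3-Thm3}.
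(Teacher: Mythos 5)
Your plan coincides with the paper's proof: the paper's Proposition 6.1 and Lemma 6.2 carry out exactly the Taylor-plus-optimization step you describe (moment conditions give vanishing derivatives of $\hat u_1$ and $\widehat{D_j^+u_0}$ at $\th=0$, the weighted decay \eqref{eq3-Thm3} gives $|\pa_\th^\al \hat v(\th)|\lesssim M_{|\al|}\rho^{|\al|}$, and taking the infimum over the order yields $|\hat v(\th)|\lesssim T[\{M_j/j!\}](1/(d\rho|\th|))^{-1}$), after which part (i) follows by bounding the integrand of $U(N_0,u_0,u_1)$ via \eqref{eq1-Thm3} and Theorem \ref{Thm2}, and part (ii) by using \eqref{eq4-Thm3} to fix $\rho$ and $N_0$. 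This is essentially the same approach as the paper, with only bookkeeping details (the precise matching of the constants $N_0$, $\tilde N_0$ and $d\rho$ via the monotonicity of $\mu$) left implicit.
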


Let us introduce some examples of the choice of $\{M_j\}$ in Theorem \ref{Thm3}. 
\begin{example}
Let $a(t)$ be define in Example \ref{Ex1} and Example \ref{Ex1Thm2}. 
If $\{M_j\} = \{j! \exp(b j^{\si})\}$ for $b>0$ and $\si>1$, then there exists a positive constant $b^\ast$ such that 
\begin{align*}
  T\left[\left\{\frac{M_j}{j!}\right\}\right](\tau)
  \gtrsim \exp\(b^\ast \(\log\tau\)^{\frac{\si}{\si-1}}\)
\end{align*}
for any $\tau\ge 1$ (see \cite{M}). 
By the consideration of Example \ref{Ex1Thm2}, there exists a positive constant $M$ such that
\begin{align*}
  \frac{T\left[\left\{\frac{M_j}{j!}\right\}\right](\tau)}
       {\exp\(\tau^{-1}\Th\(\La^{-1}(N \tau\)\)}
  \gtrsim 
  \exp\(b^\ast \(\log\tau\)^{\frac{\si}{\si-1}}
       -M\log(N\tau)^{r-1}\). 
\end{align*}
Therefore, for any $N>0$, 
\eqref{eq1-Thm3} is valid for $\si>1$ if $r\le 2$, and for $(r-1)/(r-2)>\si>1$ if $r>2$. 
Here we note that \eqref{eq4-Thm3} does not hold. 
\end{example}

\begin{example}\label{Ex2-Thm3}
Let $a(t)$ be define in Example \ref{Ex1} and Example \ref{Ex2Thm2}. 
If $\{M_j\} = \{j!^{\nu}\}$ for $\nu>1$, then there exists a positive constant $q$ such that 
\begin{align*}
  T\left[\left\{\frac{M_j}{j!}\right\}\right](\tau)
  \gtrsim \tau^{-\frac12}\exp\((\nu-1) \tau^{\frac{1}{\nu-1}}\)
\end{align*}
for any $\tau\ge 1$ (see \cite{M}). 
By the consideration of Example \ref{Ex1Thm2}, there exists a positive constant $M$ such that
\begin{align*}
  \frac{T\left[\left\{\frac{M_j}{j!}\right\}\right](\tau)}
       {\exp\(\tau^{-1}\Th\(\La^{-1}(N \tau)\)\)}
  \gtrsim 
  \tau^{-\frac12}\exp\((\nu-1) \tau^{\frac{1}{\nu-1}}
       -M N^{\frac{1-p}{1-q}}\tau^{\frac{q-p}{1-q}}\). 
\end{align*}
Therefore, \eqref{eq1-Thm3} is valid for any $N>0$ with $\nu<(1-p)/(q-p)$, 
that is, $1/(\nu-1)>(q-p)/(1-q)$, and thus Theorem \ref{Thm3} (i) can be applied. 
If $\nu=(1-p)/(q-p)$ then \eqref{eq1-Thm3} is valid for 
$N \le ((\nu-1)/M)^{(1-q)/(1-p)}$. 
Noting that $\Th(\La^{-1}(N\tau))/(N\Th(\La^{-1}(\tau)))\simeq N^{(q-p)/(1-q)}\to\infty$ as $N\to\infty$, Theorem \ref{Thm3} (ii) can be applied. 
\end{example}

%
\section{Proof of Theorem \ref{Thm1}}
%
%
%
\subsection{Proof of Theorem \ref{Thm1} (i)}
%
Let $\xi=\xi(\th)$ be defined by \eqref{xith}. 
Denoting 
\begin{equation*}
  v=v(t,\xi)=\hat{u}(t,\th),
\end{equation*}
the equation of \eqref{hu} is represented as follows: 
\begin{equation}\label{v}
\partial_t^2 v(t,\xi) + a(t)^2 |\xi|^2 v(t,\xi) =0, \;\;
  (t,\xi)\in \R_+ \times [-2,2]^d. 
\end{equation}
Here we denote $\cE(t,\th)$ by $\cE(t,\xi)$: 
\begin{align*}
  \cE(t,\xi) = |\pa_t v(t,\xi)|^2 + a(t)^2 |\xi|^2 |v(t,\xi)|^2 
\end{align*}
without any confusion. 
We define $\cE_\infty(t,\xi)$ by 
\begin{equation}
  \cE_\infty(t,\xi):=|\pa_t v(t,\xi)|^2 + a_\infty^2 |\xi|^2 |v(t,\xi)|^2. 
\end{equation}
Then we have 
\begin{align*}
  \pa_t \cE_\infty(t,\xi)
=\: &2\(a_\infty^2-a(t)^2\)|\xi|^2 \Re\(\pa_t v\, \ol{v}\) 
\\
\le\: &
  \frac{\left|a_\infty^2-a(t)^2\right||\xi|}{a_\infty} 
  \cE_\infty(t,\xi)
\le
  \frac{2a_1}{a_0} \left|a(t)-a_\infty\right||\xi|
  \cE_\infty(t,\xi).
\end{align*}
By Gronwall's inequality and \eqref{stb}, we have 
\begin{align*}
  \cE_\infty(t,\xi)
  \le\:& \exp\(\frac{2a_1}{a_0}\Th(t)|\xi|\) \cE_\infty(0,\xi)
\\
  \le\:& \exp\(\frac{4\sqrt{d} \, a_1}{a_0}\sup_{t\ge 0}\{\Th(t)\}\) 
  \cE_\infty(0,\xi) 
  \simeq \cE_\infty(0,\xi) 
\end{align*}
for any $(t,\xi)\in \R_+ \times [-2,2]^d$. 
Analogously, we have 
\begin{equation*}
  \cE_\infty(t,\xi) \ge 
  \exp\(-\frac{4\sqrt{d} \, a_1}{a_0}\sup_{t\ge 0}\{\Th(t)\}\) 
  \cE_\infty(0,\xi)
  \simeq \cE_\infty(0,\xi). 
\end{equation*}
Therefore, by Lemma \ref{lemm-E-cE} and noting the estimate: 
\begin{equation}\label{cEinfcE}
  \cE_\infty(t,\xi) \simeq \cE(t,\xi)
\end{equation}
due to \eqref{a0a1}, we have 
\begin{equation*}
  E(t) \simeq \int_{\T^d} \cE_\infty(t,\xi(\th))\,d\th
  \simeq  \int_{\T^d} \cE_\infty(0,\xi(\th))\,d\th
  \simeq E(0).
\end{equation*}
\qed

\begin{remark}
It is crucial for the proof of Theorem \ref{Thm1} (i) that $\sup_{\th\in\T}\{|\xi(\th)|\}<\infty$ which comes from the characteristics of the discrete model. 
\end{remark}

%
\subsection{Proof of Theorem \ref{Thm1} (ii)}
%

By \eqref{ak} we have 
\begin{align*}
  \pa_t \cE(t,\xi)
 = 2a'(t)a(t)|\xi|^2 |v(t,\xi)|^2
 \le \frac{2C_1}{a_0}\Xi(t)^{-1} \cE(t,\xi). 
\end{align*}
Therefore, by Gronwall's inequality we have 
\begin{equation*}
  \cE(t,\xi)
  \le \exp\(\frac{2C_1}{a_0}\left\|\Xi(\cd)^{-1}\right\|_{L^1(\R_+)}\) 
  \cE(0,\xi)
  \simeq \cE(0,\xi). 
\end{equation*}
Analogously, we have 
\begin{equation*}
  \cE(t,\xi) \ge 
  \exp\(-\frac{2C_1}{a_0}\left\|\Xi(\cd)^{-1}\right\|_{L^2(\R_+)}\) 
  \cE(0,\xi)
  \simeq \cE(0,\xi).  
\end{equation*}
Thus the proof is concluded by using Lemma \ref{lemm-E-cE}. 
\qed

%
\subsection{Proof of Theorem \ref{Thm1} (iii)}
%
\subsubsection{Zones}
We can suppose that $\lim_{t\to\infty}\Th(t)=\infty$; otherwise we have GEC by Theorem \ref{Thm1} (i). 
Let $N$ be a large constant depends on $a_0$, $a_1$ and $C_k$ ($k=0,\cdots,m$), to be chosen later. 
We define $T_0$ and $t_\xi$ for $\xi \in [-2,2]^d$ by 
\[
  T_0:=\max\left\{t\ge 0\;;\; \Th(t) = \frac{N}{2\sqrt{d}}\right\}
\]
and
\[
  t_\xi:=\max\left\{t\ge T_0\;;\; \Th(t)|\xi| = N\right\},
\]
respectively. 
Then we divide the region $\R_+\times [-2,2]^d$ by $(t_\xi,\xi)$ into the pseudo-differential zone $Z_\Psi$ and the hyperbolic zone $Z_H$ as follows: 
\begin{equation*}
  Z_\Psi:=\left\{(t,\xi)\in \R_+\times [-2,2]^d\;;\; t \le t_\xi \right\}
\end{equation*}
and
\begin{equation*}
  Z_H:=\left\{(t,\xi)\in \R_+\times [-2,2]^d \;;\; t \ge t_\xi \right\},
\end{equation*}
respectively. 
We shall estimate $\cE(t,\xi)$ in each zones by different methods. 

\subsubsection{Estimate in $Z_\Psi$}
Let $(t,\xi) \in Z_\Psi$, that is, $\Th(t)|\xi| \le N$. 
By the same way as the method for the proof of Theorem \ref{Thm1} (i), we have
\begin{align}\label{estcE0-ZP}
  \cE_\infty(t,\xi)
  \le \exp\(\frac{2a_1}{a_0}\Th(t)|\xi|\) \cE_\infty(0,\xi)
  \le \exp\(\frac{2a_1 N}{a_0}\) \cE_\infty(0,\xi) 
\end{align}
and
\begin{align*}
  \cE_\infty(t,\xi)
  \ge \exp\(-\frac{2a_1 N}{a_0}\) \cE_\infty(0,\xi). 
\end{align*}
Therefore, by \eqref{cEinfcE} we have 
\begin{equation}\label{estcE-ZP}
  \cE(t,\xi)\simeq \cE(0,\xi), \;\; t \le t_\xi.
\end{equation}

\subsubsection{Estimate in $Z_H$}

Let us consider the following first order system:
\begin{equation}\label{V}
\pa_t V=A V,\;\;
V=\begin{pmatrix}v_1 \\ v_2 \end{pmatrix},\;\;
A=\begin{pmatrix} \phi(t,\xi) & \ol{r(t,\xi)} \\ 
  r(t,\xi) & \ol{\phi(t,\xi)} \end{pmatrix}. 
\end{equation}
For a complex valued function $\phi$, we denote the real and the imaginary parts of $\phi$ by $\phi_\Re$ and $\phi_\Im$, respectively. 
Then we have the following lemma: 
\begin{lemma}\label{lemm-estV}
For any $t,\tilde{t} \in \R$ satisfying $\tilde{t} < t$ 
the solution $V=V(t,\xi)$ of \eqref{V} satisfies the following estimates: 
\begin{equation}\label{estV}
  \|V(t,\xi)\|_{\C^2}^2
  \begin{cases}
  \ds{\le  \exp\(2\int^t_{\tilde{t}} \phi_\Re(s,\xi)\,ds
        + 2\int^t_{\tilde{t}} |r(s,\xi)|\,ds\)
  \|V(t_0,\xi)\|_{\C^2}^2},
\\[3ex]
  \ds{\ge \exp\(2\int^t_{\tilde{t}} \phi_\Re(s,\xi)\,ds
        - 2\int^t_{\tilde{t}} |r(s,\xi)|\,ds\) 
  \|V(t_0,\xi)\|_{\C^2}^2}.
  \end{cases}
\end{equation}

\end{lemma}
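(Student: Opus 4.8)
The plan is to reduce the estimate to a scalar Gronwall-type argument applied to $\|V(t,\xi)\|_{\C^2}^2$. First I would compute the $t$-derivative of $\|V\|_{\C^2}^2 = V^\ast V = |v_1|^2+|v_2|^2$, using the system $\pa_t V = AV$. This gives
\[
  \pa_t \|V\|_{\C^2}^2 = (\pa_t V)^\ast V + V^\ast(\pa_t V)
  = V^\ast\(A^\ast + A\)V.
\]
So everything rests on understanding the Hermitian matrix $A^\ast + A$. Writing $\phi = \phi_\Re + \i\phi_\Im$, one has $A^\ast + A = \begin{pmatrix} 2\phi_\Re & 2\Re\ol{r} \\ 2\Re r & 2\phi_\Re \end{pmatrix}$ — wait, more carefully: the diagonal entries of $A$ are $\phi$ and $\ol\phi$, so the diagonal of $A^\ast+A$ is $\phi+\ol\phi = 2\phi_\Re$ in both slots; the off-diagonal entries combine $r,\ol r$ and their conjugates, giving a real symmetric off-diagonal contribution. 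The point is that $A^\ast + A = 2\phi_\Re I + R$ where $R$ is Hermitian with zero diagonal and off-diagonal entries built from $r$.

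Next I would bound the quadratic form. The term $2\phi_\Re I$ contributes exactly $2\phi_\Re(s,\xi)\|V\|_{\C^2}^2$. For the remainder $R$, its eigenvalues are $\pm 2|r|$ (a $2\times2$ Hermitian matrix with zero diagonal and off-diagonal modulus $|r|$ has eigenvalues $\pm 2|r|$, up to the factor coming from how $r,\ol r$ assemble — in any case the spectral radius is controlled by a constant times $|r|$), so
\[
  -2|r(s,\xi)|\,\|V\|_{\C^2}^2 \le V^\ast R V \le 2|r(s,\xi)|\,\|V\|_{\C^2}^2.
\]
Combining, $\bigl|\pa_t\|V\|_{\C^2}^2 - 2\phi_\Re\|V\|_{\C^2}^2\bigr| \le 2|r|\,\|V\|_{\C^2}^2$, i.e.
\[
  \(2\phi_\Re - 2|r|\)\|V\|_{\C^2}^2 \le \pa_t\|V\|_{\C^2}^2 \le \(2\phi_\Re + 2|r|\)\|V\|_{\C^2}^2.
\]
Then integrating these two scalar differential inequalities from $\tilde t$ to $t$ (Gronwall, or simply integrating $\pa_t\log\|V\|_{\C^2}^2$ when $\|V\|\ne 0$, with an approximation argument or continuity to cover zeros) yields precisely the two bounds in \eqref{estV}, with $\|V(\tilde t,\xi)\|_{\C^2}^2$ on the right (the paper writes $\|V(t_0,\xi)\|$, presumably meaning the lower endpoint $\tilde t$).

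The only genuinely delicate point is the eigenvalue/spectral-norm bound on the off-diagonal Hermitian part $R$: one must check that the way $r$ and $\ol r$ (and the conjugate structure of $A$) assemble into $A^\ast+A$ really does produce off-diagonal entries of modulus $|r|$ (or a fixed constant multiple thereof), so that $|V^\ast R V| \le 2|r|\,\|V\|_{\C^2}^2$ by Cauchy–Schwarz or by diagonalizing the $2\times2$ Hermitian matrix. Everything else is routine: the derivative computation and the integration of a scalar linear differential inequality. A minor care is needed at points where $V(t,\xi)=0$, handled either by noting that by uniqueness for the linear ODE $V\equiv 0$ if it vanishes anywhere, or by a standard $\ve$-regularization $\|V\|^2 + \ve$ before taking logarithms.
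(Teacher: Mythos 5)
Your proposal is correct and follows essentially the same route as the paper: differentiate $\|V\|_{\C^2}^2$, isolate the diagonal contribution $2\phi_\Re\|V\|_{\C^2}^2$, bound the off-diagonal cross term --- which equals $4\Re\(r v_1\ol{v_2}\)$, so the Hermitian remainder does have spectral radius exactly $2|r|$ --- by $2|r|\,\|V\|_{\C^2}^2$, and integrate the resulting scalar differential inequalities. The only cosmetic differences are that the paper gets the cross-term bound directly from $4|\Re(rv_1\ol{v_2})|\le 4|r||v_1||v_2|\le 2|r|\|V\|_{\C^2}^2$ rather than by diagonalizing the off-diagonal part, and you are right that $\|V(t_0,\xi)\|_{\C^2}$ in the statement should read $\|V(\tilde t,\xi)\|_{\C^2}$.
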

\begin{proof}
By Cauchy-Schwarz inequality, we have 
\begin{align*}
  \pa_t \|V\|_{\C^2}^2
&=2\Re\(\pa_t V,V\)_{\C^2}
 =2\Re\(\begin{pmatrix} \phi & 0 \\ 0 & \ol{\phi} \end{pmatrix}V, V\)_{\C^2}
 +2\Re\(\begin{pmatrix} 0 & \ol{r} \\ r & 0 \end{pmatrix}V, V\)_{\C^2}
\\
&=2\phi_\Re\|V\|_{\C^2}^2 + 4\Re\(rv_1\ol{v_2}\)
\begin{cases}
\le 2\(\phi_\Re + |r|\)\|V\|_{\C^2}^2,
\\
\ge 2\(\phi_\Re - |r|\)\|V\|_{\C^2}^2.
\end{cases}
\end{align*}
Thus we have \eqref{estV} by Gronwall's inequality. 
\end{proof}

Since the equation of \eqref{v} is reduced to the following first order system: %
\begin{equation}\label{V1}
  \pa_t V_1=A_1 V_1,\;\;
  V_1:=\begin{pmatrix}
    \pa_t v + \i a(t)|\xi|v \\ \pa_t v -\i a(t)|\xi|v
  \end{pmatrix},
  \;\;
  A_1:=\begin{pmatrix} \phi_1 & \ol{r_1} \\ r_1 & \ol{\phi_1} \end{pmatrix},
\end{equation}
where 
\begin{align*}
  r_1:=-\frac{a'(t)}{2a(t)}=\ol{r_1}
  \;\text{ and }\;
  \phi_1=\frac{a'(t)}{2a(t)}+\i a(t)|\xi|,
\end{align*}
by Lemma \ref{lemm-estV} and noting the equality 
\begin{equation}\label{cEV1}
  \|V_1(t,\xi)\|_{\C^2}^2=2\cE(t,\xi),
\end{equation}
we have 
\begin{equation}\label{estV1}
  \cE(t,\xi)
\begin{cases}
\ds{\le
\frac{a(t)}{a(t_0)}
  \exp\(\int^t_{\tilde{t}} \frac{|a'(s)|}{a(s)}\,ds\) 
  \cE(t_0,\xi)}, 
\\[3ex] 
\ds{\ge
\frac{a(t)}{a(t_0)}
  \exp\(-\int^t_{\tilde{t}} \frac{|a'(s)|}{a(s)}\,ds\) 
  \cE(t_0,\xi)}. 
\end{cases}
\end{equation}
Indeed, \eqref{GEC} is concluded from the estimate \eqref{estV1} 
if $\Xi(t)^{-1}\in L^1(\R_+)$, but not for $\Xi(t)\not\in L^1(\R_+)$. 
Therefore, we introduce the following idea, which is called refined diagonalization procedure taking account the properties \eqref{stb} and \eqref{ak} with $m\ge 2$. 
A basic idea of this method was introduced in \cite{Yg}, and improved in \cite{H07} to take the benefit of the property \eqref{stb}. 
The refined diagonalization procedure can be understood to construct a $2 \times 2$ regular matrix $M=M(t,\xi)$ in $Z_H$ satisfies the following properties: 
\begin{itemize}
\item
$M$ is defined in $Z_H$. 
\item
$V:=M V_1$ satisfies $\|V(t,\xi)\|_{\C^2}^2 \simeq \cE(t,\xi)$. 
\item
$V$ is a solution of \eqref{V}. 
\item
$\int^t_{\tilde{t}}\phi_\Re(s,\xi)\,ds$ is bounded in $Z_H$. 
\item
$\int^t_{\tilde{t}}|r(s,\xi)|\,ds$ is bounded in $Z_H$. 
\end{itemize}
Indeed, if there exists such a matrix $M$, then we immediately have \eqref{GEC} from \eqref{estcE-ZP} and \eqref{estV} with $\tilde{t}=t_\xi$.

For $p\in \N_0$ and $q, r \in \Z$ we introduce the symbol-like class 
$S^{(p)}\{q,r\}$ as the set of functions $f(t,\xi)$ satisfy the following estimates in $Z_H$: 
\begin{equation*}
  \left|\pa_t^k f(t,\xi)\right|\lesssim 
  |\xi|^q \Xi(t)^{-r-k}
  \;\; (k=0,\ldots,p).
\end{equation*}
Then we have the following usual algebraic properties and a hierarchy of the classes in $Z_H$:
\begin{lemma}\label{symbol}
The following properties are established:

\begin{itemize}
\item[\rm (i)]
If $f\in S^{(p)}\{q,r\}$ with $p\ge1$, then $f\in S^{(p-1)}\{q,r\}$ and $\pa_t f\in S^{(p-1)}\{q,r+1\}$.

\item[\rm (ii)]
If $f_1\in S^{(p)}\{q_1,r_1\}$ and $f_2\in S^{(p)}\{q_2,r_2\}$, then
$f_1f_2\in S^{(p)}\{q_1+q_2,r_1+r_2\}$.
\item[\rm (iii)]
If $f\in S^{(p)}\{q,r\}$, then $f \in S^{(p)}\{q+1,r-1\}$.
\item[\rm (iv)]
If $f\in S^{(p)}\{-q,q\}$ with $q\ge 1$, then for any $\ve>0$ there exists a positive constant $N_0$ such that $|f(t,\xi)|\le \ve$ for any $N \ge N_0$. 
\end{itemize}
\end{lemma}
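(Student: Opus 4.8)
The final statement to prove is Lemma \ref{symbol} (the symbol-class properties (i)--(iv)).

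The plan is to verify each property directly from the definition of $S^{(p)}\{q,r\}$, which says $f\in S^{(p)}\{q,r\}$ iff $|\pa_t^k f(t,\xi)| \lesssim |\xi|^q \Xi(t)^{-r-k}$ for $k=0,\ldots,p$ in $Z_H$.

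First, for (i), if $f\in S^{(p)}\{q,r\}$ with $p\ge 1$, then a fortiori the bounds for $k=0,\ldots,p-1$ hold, so $f\in S^{(p-1)}\{q,r\}$; and since $\pa_t^k(\pa_t f)=\pa_t^{k+1}f$ satisfies $|\pa_t^{k+1}f|\lesssim |\xi|^q\Xi(t)^{-r-(k+1)} = |\xi|^q\Xi(t)^{-(r+1)-k}$ for $k=0,\ldots,p-1$, we get $\pa_t f\in S^{(p-1)}\{q,r+1\}$. For (ii), I would apply the Leibniz rule $\pa_t^k(f_1f_2)=\sum_{i=0}^k\binom{k}{i}(\pa_t^i f_1)(\pa_t^{k-i}f_2)$ and bound each term by $|\xi|^{q_1+q_2}\Xi(t)^{-r_1-i}\Xi(t)^{-r_2-(k-i)} = |\xi|^{q_1+q_2}\Xi(t)^{-(r_1+r_2)-k}$, summing the finitely many binomial coefficients into the implied constant. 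For (iii), the key observation is that in $Z_H$ we have $t\ge t_\xi\ge T_0$, hence $\Th(t)|\xi|\ge N$ by the definition of $t_\xi$, and by (H3*) $\Th(t)\le C_0\Xi(t)$, so $|\xi|\gtrsim \Th(t)^{-1}\gtrsim \Xi(t)^{-1}$, i.e. $1\le C|\xi|\Xi(t)$; multiplying the bound $|\pa_t^k f|\lesssim |\xi|^q\Xi(t)^{-r-k}$ by this gives $|\pa_t^k f|\lesssim |\xi|^{q+1}\Xi(t)^{-(r-1)-k}$, which is exactly $f\in S^{(p)}\{q+1,r-1\}$.

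For (iv), if $f\in S^{(p)}\{-q,q\}$ with $q\ge 1$, then $|f(t,\xi)|\lesssim |\xi|^{-q}\Xi(t)^{-q} = (|\xi|\Xi(t))^{-q}$. Again using (H3*) and the defining inequality $\Th(t)|\xi|\ge N$ in $Z_H$, we get $|\xi|\Xi(t)\ge C_0^{-1}|\xi|\Th(t)\ge C_0^{-1}N$, hence $|f(t,\xi)|\lesssim (C_0^{-1}N)^{-q}\le (C_0^{-1}N)^{-1}\to 0$ as $N\to\infty$ (using $q\ge 1$ and $C_0^{-1}N\ge 1$ for $N$ large). So for any $\ve>0$ we may choose $N_0$ large enough that the right-hand side is $\le\ve$ for all $N\ge N_0$.

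The only genuinely non-routine point is (iii) and (iv): these are not formal symbol-calculus facts but rely essentially on the geometry of the hyperbolic zone, namely that $|\xi|\Xi(t)$ is bounded below by a constant multiple of $N$ on $Z_H$ (via $\Th(t)|\xi|\ge N$ and (H3*)). I would make sure to state this zone estimate explicitly as the first step, since everything else is bookkeeping. I expect the main obstacle to be purely notational—keeping the constants $C_k$, $C_0$, $N$ straight—rather than conceptual; there is no hard analysis here, just careful application of the definitions together with the single geometric inequality $|\xi| \gtrsim \Xi(t)^{-1}$ valid in $Z_H$.
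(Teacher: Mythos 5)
Your proposal is correct and follows essentially the same route as the paper: (i) and (ii) are immediate from the definition (the paper simply calls them trivial, while you spell out the Leibniz rule), and (iii) and (iv) both rest on the same zone inequality $\Th(t)|\xi|\ge N$ in $Z_H$ combined with (H3*), giving $|\xi|\Xi(t)\ge N/C_0$, exactly as in the paper's proof. The only cosmetic difference is that the paper keeps the factor $N^{-1}C_0$ explicit in (iii) (which is harmless for the statement but convenient later), whereas you absorb it into the implied constant.
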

\begin{proof}
(i) and (ii) are trivial from the definition of $S^{(p)}\{q,r\}$. 
Let $f(t,\xi)\in S^{(p)}\{q,r\}$. 
By (\ref{ThXi}) we have
\begin{align*}
  \left|\pa_t^k f(t,\xi)\right|
  \lesssim\: & |\xi|^q \Xi(t)^{-r-k}
  \le N^{-1} |\xi|^{q+1} \frac{\Th(t)}{\Xi(t)} \Xi(t)^{-(r-1)-k}
\\
  \le\:& N^{-1} C_0 |\xi|^{q+1} \Xi(t)^{-(r-1)-k}
\end{align*}
for $k=0,\ldots,p$. 
It follows that $f(t,\xi)\in S^{(p)}\{q+1,r-1\}$; thus (iii) is proved. 
If $f(t,\xi)\in S^{(p)}\{-q,q\}$, then by \eqref{ThXi} we have 
\begin{align*}
  |f(t,\xi)|
  \le C|\xi|^{-q}\Xi(t)^{-q}
  \le C N^{-q}\(\frac{\Th(t)}{\Xi(t)}\)^{q}
  \le C C_0^q N^{-q} \le \ve
\end{align*}
by putting $N_0=C_0(C\ve^{-1})^{1/q}$. 
\end{proof}

By Lemma \ref{symbol} we have the following lemma: 
\begin{lemma}\label{symbol2}
There exists a positive constant $N_0$ such that the following properties are established for any $N \ge N_0$:
\begin{itemize}
\item[\rm (i)] 
If $f(t,\xi)\in S^{(p)}\{0,0\}$ and $f(t,\xi)\gtrsim 1$, 
then $1/f(t,\xi)\in S^{(p)}\{0,0\}$. 
\item[\rm (ii)] 
If $f(t,\xi)\in S^{(p)}\{1,0\}$ and $f(t,\xi)\gtrsim |\xi|$, 
then $1/f(t,\xi)\in S^{(p)}\{-1,0\}$. 
\item[\rm (iii)] 
If $f(t,\xi)\in S^{(p)}\{-q,q\}$ with $q \ge 1$, 
then $\sqrt{1-|f(t,\xi)|^2}\in S^{(p)}\{0,0\}$. 
\end{itemize} 
\end{lemma}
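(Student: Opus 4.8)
The plan is to reduce (ii) and (iii) to (i), and to prove (i) by induction on the differentiation order $p$, using only the calculus of the classes $S^{(p)}\{q,r\}$ recorded in Lemma \ref{symbol}. For (i) put $g:=1/f$. The case $p=0$ is immediate: $f\gtrsim 1$ gives $|g|\lesssim 1$. For the inductive step assume the assertion of (i) with $p$ replaced by $p-1$. Since $f\in S^{(p)}\{0,0\}\subseteq S^{(p-1)}\{0,0\}$ and $f\gtrsim 1$, the induction hypothesis gives $1/f\in S^{(p-1)}\{0,0\}$, hence $1/f^2=(1/f)(1/f)\in S^{(p-1)}\{0,0\}$ by Lemma \ref{symbol}(ii); also $\pa_t f\in S^{(p-1)}\{0,1\}$ by Lemma \ref{symbol}(i). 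Therefore $\pa_t g=-(\pa_t f)(1/f^2)\in S^{(p-1)}\{0,1\}$ by Lemma \ref{symbol}(ii), which together with $|g|\lesssim 1$ is precisely the statement $g\in S^{(p)}\{0,0\}$. (Alternatively one can bypass the induction by expanding $\pa_t^k(1/f)$, via Fa\`a di Bruno's formula, into a finite sum of terms $\pa_t^{k_1}f\cdots\pa_t^{k_j}f/f^{\,j+1}$ with $k_1+\dots+k_j=k$ and estimating each factor directly: every numerator contributes a factor $\Xi(t)^{-k_1-\dots-k_j}=\Xi(t)^{-k}$ and the denominator is $\lesssim 1$.)

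Statement (ii) is then immediate from (i): if $f\in S^{(p)}\{1,0\}$ and $f\gtrsim|\xi|$, then $|\xi|^{-1}f\in S^{(p)}\{0,0\}$ with $|\xi|^{-1}f\gtrsim 1$, so (i) gives $|\xi|/f\in S^{(p)}\{0,0\}$, and multiplying by $|\xi|^{-1}$ yields $1/f\in S^{(p)}\{-1,0\}$.

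For (iii) note first that the defining estimates of $S^{(p)}\{q,r\}$ involve only $|\pa_t^k f|$, and $|\pa_t^k\ol f|=|\pa_t^k f|$, so $\ol f\in S^{(p)}\{-q,q\}$ and hence $|f|^2=f\ol f\in S^{(p)}\{-2q,2q\}$ by Lemma \ref{symbol}(ii); iterating Lemma \ref{symbol}(iii) $2q$ times gives $|f|^2\in S^{(p)}\{0,0\}$, so $g:=1-|f|^2\in S^{(p)}\{0,0\}$. Choosing $N_0$ so that Lemma \ref{symbol}(iv) applies with $\ve=1/2$, we get $|f(t,\xi)|\le 1/2$ throughout $Z_H$ whenever $N\ge N_0$, whence $3/4\le g\le 1$, i.e.\ $g\gtrsim 1$. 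It remains to pass the square root: $\sqrt g\in S^{(p)}\{0,0\}$ follows by exactly the same induction on $p$ as in (i), now using $\pa_t\sqrt g=(\pa_t g)/(2\sqrt g)$ together with $\pa_t g\in S^{(p-1)}\{0,1\}$ and $1/\sqrt g\in S^{(p-1)}\{0,0\}$. The constant $N_0$ in the statement may be taken to be this one, since (i) and (ii) impose no restriction on $N$.

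Everything here is routine once Lemma \ref{symbol} is available; the one genuine point is that (iii) requires $N$ large, and this is exactly where Lemma \ref{symbol}(iv) — which quantifies that the relevant symbols become small in $Z_H$ as $N\to\infty$ — enters, to keep $1-|f|^2$ bounded away from $0$. The only bookkeeping nuisance is making the square-root step clean; a convenient device is a small auxiliary composition statement (if $F$ is smooth on a neighborhood of the range of $g\in S^{(p)}\{0,0\}$, then $F(g)\in S^{(p)}\{0,0\}$), proved by the same induction, which handles both $F(x)=1/x$ away from $0$ and $F(x)=\sqrt x$ at once. I do not anticipate any real obstacle.
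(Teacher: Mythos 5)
Your proof is correct, but it runs on a different engine than the paper's. The paper proves (i)--(iii) by applying Fa\`a di Bruno's formula directly to $F(G)=1/G$ (resp.\ $F(G)=\sqrt{1-G}$) and estimating the resulting combinatorial sum term by term, using $|F^{(h)}(G)|\lesssim |G|^{-h-1}$ (resp.\ $|F^{(h)}(G)|\lesssim 4^h$ once Lemma \ref{symbol} (iv) guarantees $1-|f|^2\ge 1/2$). You instead induct on the differentiation order $p$, using only the quotient rule $\pa_t(1/f)=-(\pa_t f)/f^2$ (resp.\ $\pa_t\sqrt g=(\pa_t g)/(2\sqrt g)$) together with the algebra of the classes from Lemma \ref{symbol}; the observation that ``$|g|\lesssim 1$ and $\pa_t g\in S^{(p-1)}\{0,1\}$'' is exactly the statement $g\in S^{(p)}\{0,0\}$ makes the induction close correctly. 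Two further points of divergence: you reduce (ii) to (i) by the rescaling $f\mapsto |\xi|^{-1}f$, where the paper proves (ii) directly and declares (i) analogous; and in (iii) you first flatten $|f|^2\in S^{(p)}\{-2q,2q\}$ to $S^{(p)}\{0,0\}$ by iterating Lemma \ref{symbol} (iii) before taking the square root, whereas the paper keeps the $\{-2q,2q\}$ weights and lets them produce the convergent factor $\sum_h (C_0/N)^{2qh}$. What your route buys is the avoidance of any multivariate combinatorics and a uniform template (your auxiliary composition remark) covering both $1/x$ and $\sqrt{x}$; what the paper's route buys is an explicit, non-inductive bound in one pass, with the dependence on $N$ visible in the final geometric sum. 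You also correctly isolate the only place where $N\ge N_0$ is genuinely needed, namely keeping $1-|f|^2$ away from $0$ in (iii), which matches the paper.
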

\begin{proof}
Since (i) is proved by the same way as the proof of (ii), we prove (ii). 
By applying Fa\`a di Bruno's formula:  
\begin{equation*}
  \frac{d^k}{dt^k}F(G(x))
 =k!\sum_{h=1}^k 
  F^{(h)}(G(t))
  \sum_{\substack{h_1 + 2h_2 + \cdots + n h_n=k \\ 
                  h_1+h_2+\cdots + h_k = h}}
  \,
  \prod_{l=1}^k \frac{1}{h_l!\, l!^{h_l}}\(G^{(l)}(t)\)^{h_l} 
\end{equation*}
with $F(G)=1/G$ and $G(t)=f(t,\xi)$, and noting 
$|F^{(h)}(G)|\lesssim 1/|G|^{h+1}$, we have 
\begin{align*}
  \left|\pa_t^k \frac{1}{f(t,\xi)}\right|
\lesssim\:&
  \sum_{h=1}^k \frac{1}{f(t,\xi)^{h+1}}
  \sum_{\substack{h_1 + 2h_2 + \cdots + n h_n=k \\ 
                  h_1+h_2+\cdots + h_k = h}}
  \,
  \prod_{l=1}^k \left|\pa_t^l f(t,\xi)\right|^{h_l}
\\
\lesssim\:&
  \sum_{h=1}^k |\xi|^{-h-1}
  \sum_{\substack{h_1 + 2h_2 + \cdots + n h_n=k \\ 
                  h_1+h_2+\cdots + h_k = h}} \,
  \prod_{l=1}^k \(|\xi| \Xi(t)^{-l}\)^{h_l}
\\
\simeq\:&
  |\xi|^{-1} \Xi(t)^{-k}
\end{align*}
for $k=0,\ldots,p$; thus (ii) is proved.  
By Lemma \ref{symbol} (iv) we can suppose that 
$\sqrt{1-|f(t,\xi)|^2} \ge 1/2$. 
By applying Fa\`a di Bruno's formula with 
$F(G)=\sqrt{1-G}$ and $G(t)=|f(t,\xi)|^2 \in S^{(p)}\{-2q,2q\}$, 
and noting the estimates 
\begin{align*}
  |F^{(h)}(G)| \lesssim \frac{F(G)}{(1-G)^h} \le 4^h
\end{align*}
for $h=0,1,\ldots$, we have 
\begin{align*}
  \left|\pa_t^k \sqrt{1-|f(t,\xi)|^2}\right|
\lesssim\:&
  \sum_{h=1}^k 4^h 
  \sum_{\substack{h_1 + 2h_2 + \cdots + n h_n=k \\ 
                  h_1+h_2+\cdots + h_k = h}}
  \,
  \prod_{l=1}^k \left|\pa_t^l |f(t,\xi)|^2\right|^{h_l}
\\
\lesssim\:&
  \sum_{h=1}^k 
  \sum_{\substack{h_1 + 2h_2 + \cdots + n h_n=k \\ 
                  h_1+h_2+\cdots + h_k = h}}
  \,
  \prod_{l=1}^k \(|\xi|^{-2q}\Xi(t)^{-2q-l}\)^{h_l}
\\
\lesssim\:&
  \sum_{h=1}^k |\xi|^{-2qh}\Xi(t)^{-2qh-k}
\le \Xi(t)^{-k} \sum_{h=1}^k \(\frac{C_0}{N}\)^{2qh}
\\
\simeq\:& \Xi(t)^{-k} 
\end{align*}
for $k=1,\ldots,p$; thus (iii) is prove. 
\end{proof}

An eigenvalue of $A_1$ is represented by 
\begin{equation*}
  \la_1=\phi_{1\Re} + \i \phi_{1\Im}\sqrt{1 -\(\frac{|r_1|}{\phi_{1\Im}}\)^2}. 
\end{equation*}
By \eqref{ak}, \eqref{ThXi} and choosing $N>C_0 C_1 a_0^{-2}/2$, we have 
\begin{align*}
  \(\frac{|r_1|}{\phi_{1\Im}}\)^2
\le \(\frac{C_1}{2a_0^2|\xi|\Xi(t)}\)^2
\le \(\frac{C_1}{2a_0^2N}\frac{\Th(t)}{\Xi(t)}\)^2
\le \(\frac{C_0 C_1}{2a_0^2N}\)^2<1.
\end{align*}
It follows that the other eigenvalue of $A_1$ is given by $\ol{\la_1}$. 
Therefore, a diagonalizer $M_1$ for $A_1$ is formally given by 
\begin{equation*}
  M_1=\begin{pmatrix} 1 & \ol{\de_1} \\ \de_1 & 1 \end{pmatrix},
  \quad
  \de_{1}=\frac{\la_{1}-\phi_1}{\ol{r_1}}. 
\end{equation*}
Here we note that the following lemma is established which ensures the invertibility of $M_1$: 
\begin{lemma}\label{symbol_de1}
There exists a positive constant $N_0$ such that 
$r_1\in S^{(m-1)}\{0,1\}$, $1/\phi_{1\Im}\in S^{(m)}\{-1,0\}$
and $\de_1\in S^{(m-1)}\{-1,1\}$ for any $N \ge N_0$. 
\end{lemma}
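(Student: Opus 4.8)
The plan is to verify the three claimed memberships one at a time, using the algebraic calculus of Lemma \ref{symbol} and Lemma \ref{symbol2} together with the basic bounds \eqref{ak} and \eqref{ThXi}, and the fact that in $Z_H$ one has $\Th(t)|\xi| \ge N$ by construction of $t_\xi$.

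\medskip

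\textit{Step 1: $r_1 \in S^{(m-1)}\{0,1\}$.} Recall $r_1 = -a'(t)/(2a(t))$. I would differentiate this quotient $k$ times for $k=0,\ldots,m-1$ via the Leibniz rule (or Fa\`a di Bruno applied to $1/a$), and bound each term: every derivative $a^{(l)}(t)$ with $1\le l\le m$ is controlled by $C_l\Xi(t)^{-l}$ from \eqref{ak}, while $a$ and $1/a$ are bounded above and below by \eqref{a0a1}. Matching powers of $\Xi(t)$ shows $|\pa_t^k r_1| \lesssim \Xi(t)^{-1-k}$, which is exactly $S^{(m-1)}\{0,1\}$. (Here I use that $1/a \in S^{(m)}\{0,0\}$, which follows from Lemma \ref{symbol2}(i) since $a\gtrsim 1$; strictly speaking this needs $a$ itself to lie in $S^{(m)}\{0,0\}$, which is again immediate from \eqref{ak} and \eqref{a0a1}.)

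\medskip

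\textit{Step 2: $1/\phi_{1\Im} \in S^{(m)}\{-1,0\}$.} Since $\phi_{1\Im} = a(t)|\xi|$, we have $\phi_{1\Im} \gtrsim a_0|\xi| \gtrsim |\xi|$, so $\phi_{1\Im} \in S^{(m)}\{1,0\}$ by \eqref{ak} and \eqref{a0a1}, and then Lemma \ref{symbol2}(ii) gives $1/\phi_{1\Im} \in S^{(m)}\{-1,0\}$ directly, provided $N$ is large enough for that lemma to apply.

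\medskip

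\textit{Step 3: $\de_1 \in S^{(m-1)}\{-1,1\}$.} Write $\de_1 = (\la_1 - \phi_1)/\ol{r_1}$. Using the formula for $\la_1$, the numerator is
\[
  \la_1 - \phi_1 = \i\phi_{1\Im}\left(\sqrt{1 - (|r_1|/\phi_{1\Im})^2} - 1\right).
\]
The key observation is that $|r_1|/\phi_{1\Im} \in S^{(m-1)}\{-1,1\}$: indeed $|r_1| = |r_1| \in S^{(m-1)}\{0,1\}$ (as $r_1$ is real, $|r_1|$ inherits the same bounds away from its zeros; more robustly one works with $(|r_1|/\phi_{1\Im})^2 = r_1^2/\phi_{1\Im}^2$, which by Steps 1--2 and Lemma \ref{symbol}(ii) lies in $S^{(m-1)}\{-2,2\}$) so Lemma \ref{symbol}(iv) makes it small. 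Then Lemma \ref{symbol2}(iii) applied to $f = |r_1|/\phi_{1\Im}$ (or directly to $f^2$ via the Fa\`a di Bruno argument in its proof, with $F(G)=\sqrt{1-G}-1$, noting $F(0)=0$) yields $\sqrt{1-(|r_1|/\phi_{1\Im})^2} - 1 \in S^{(m-1)}\{-2,2\}$. Multiplying by $\i\phi_{1\Im} \in S^{(m-1)}\{1,0\}$ via Lemma \ref{symbol}(ii) gives numerator $\in S^{(m-1)}\{-1,2\}$. Finally $1/\ol{r_1} = 1/r_1$: since $r_1 \in S^{(m-1)}\{0,1\}$ one might hope to apply Lemma \ref{symbol2}, but $r_1$ need not be bounded below, so instead I note the numerator already carries a factor of $r_1^2$ hidden inside, and cancel one power — more precisely $\la_1 - \phi_1 = \i\phi_{1\Im}(\sqrt{1-g}-1)$ with $g = r_1^2/\phi_{1\Im}^2$, and $\sqrt{1-g}-1 = -g/(1+\sqrt{1-g})$, so $\de_1 = -\i\phi_{1\Im}\, g \,/\,((1+\sqrt{1-g})\,\ol{r_1}) = -\i r_1/(\phi_{1\Im}(1+\sqrt{1-g}))$. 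Now the numerator $r_1 \in S^{(m-1)}\{0,1\}$, the factor $1/\phi_{1\Im}\in S^{(m)}\{-1,0\}$, and $1/(1+\sqrt{1-g}) \in S^{(m-1)}\{0,0\}$ by Lemma \ref{symbol2}(i),(iii); multiplying with Lemma \ref{symbol}(ii) gives $\de_1 \in S^{(m-1)}\{-1,1\}$, as claimed. The invertibility of $M_1$ follows since $\det M_1 = 1 - |\de_1|^2$ and $|\de_1| \le C N^{-1}\Th(t)/\Xi(t) \le CC_0 N^{-1} < 1$ for $N$ large by Lemma \ref{symbol}(iv).

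\medskip

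The main obstacle is exactly this last rewriting in Step 3: the naive expression $\de_1 = (\la_1-\phi_1)/\ol{r_1}$ involves dividing by $r_1$, which is not bounded away from zero, so one must algebraically cancel the spurious singularity before estimating. The rationalization $\sqrt{1-g}-1 = -g/(1+\sqrt{1-g})$ is what makes this work cleanly, reducing everything to products of symbols already handled by Lemmas \ref{symbol} and \ref{symbol2}. Everything else is a routine application of the symbol calculus.
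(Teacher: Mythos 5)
Your proposal is correct and follows essentially the same route as the paper: the first two memberships are handled by the basic symbol calculus, and the decisive step for $\de_1$ is exactly the rationalization $\sqrt{1-g}-1=-g/(1+\sqrt{1-g})$ leading to $\de_1=-\i r_1\,\phi_{1\Im}^{-1}\,\bigl(1+\sqrt{1-(|r_1|/\phi_{1\Im})^2}\bigr)^{-1}$, after which Lemmas \ref{symbol} and \ref{symbol2} give the class $S^{(m-1)}\{-1,1\}$. This is precisely the computation in the paper's proof.
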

\begin{proof}
The first two properties are trivial. 
Let $N_0$ be large enough so that Lemma \ref{symbol} and Lemma \ref{symbol2} can be applied. 
By Lemma \ref{symbol} (ii) and (iv), we have 
$(|r_1|/\phi_{1\Im})^2 \in S^{(m-1)}\{-2,2\}$ and 
$(|r_1|/\phi_{1\Im})^2 \le 1/2$. 
By Lemma \ref{symbol2} (iii) we have 
\begin{align*}
  \sqrt{1-\(\frac{|r_1|}{\phi_{1\Im}}\)^2}+1\in S^{m-1}\{0,0\},
\end{align*}
it follows from Lemma \ref{symbol2} (i) that 
\begin{align*}
  \frac{1}{\sqrt{1-\(\frac{|r_1|}{\phi_{1\Im}}\)^2}+1}
  \in S^{(m-1)}\{0,0\}. 
\end{align*}
Therefore, by Lemma \ref{symbol} (ii) and the first two properties of Lemma \ref{symbol_de1}, we have 
\begin{align*}
  \de_1
=\frac{\i \phi_{1\Im}}{\ol{r_1}}
  \(\sqrt{1-\(\frac{|r_1|}{\phi_{1\Im}}\)^2}-1\)
=-\frac{\i r_1}{\phi_{1\Im}}
  \frac{1}{\sqrt{1-\(\frac{|r_1|}{\phi_{1\Im}}\)^2}+1}
\in S^{(m-1)}\{-1,1\}. 
\end{align*}
\end{proof}

We define $A_2=A_2(t,\xi)$ by
\begin{align*}
  A_2:=\begin{pmatrix} \phi_2 & \ol{r_2} \\ r_2 & \ol{\phi_2} \end{pmatrix},
  \;\;
  r_2:=-\frac{(\de_1)_t}{1-|\de_1|^2},\;\;
  \phi_2:=\la_1+\frac{\ol{\de_1}(\de_1)_t}{1-|\de_1|^2}. 
\end{align*}
Then we see that $r_2 \in S^{(m-2)}\{-1,2\}$ by the lemmas for the symbol classes above. 
Noting the equalities 
\begin{align*}
  M_1^{-1} \(A_1 - \pa_t I\)M_1
=\begin{pmatrix} \la_{1} & 0 \\ 0 & \ol{\la_{1}} \end{pmatrix}
 -M_1^{-1} \(\pa_t M_1\) -\pa_t I
=A_2 - \pa_t I,
\end{align*}
\eqref{V1} is reduced to the following system: 
\begin{equation*}
  \pa_t V_2 = A_2 V_2,\quad
  V_2:=M_1^{-1}V_1.
\end{equation*}
Here we remark the followings: 
\begin{itemize}
\item 
$M_1$ is a diagonalizer for $A_1$ but not for $A_1-\pa_t I$, 
that is, $A_1 - M_1^{-1}(\pa_t M_1)$ is not diagonal. 
\item 
Since $r_1 \in S^{(m-2)}\{0,1\}$ and $r_2 \in S^{(m-2)}\{-1,2\}$, 
$M_1$ is a diagonalizer for $A_1-\pa_t I$ modulo $S^{(m-2)}\{0,1\}$. 
\item
The structure in which both the diagonal and the off-diagonal entries are complex conjugate are conserved by the diagonalization procedure due to $M_1$. 
\end{itemize}
Therefore, one can carry out the same diagonalization procedure for $A_2-\pa_t I$ if $m\ge 3$. 
Generally, we have the following lemma, which is the essential of the refined diagonalization procedure. 
\begin{lemma}\label{ref_diag}
Let $k$ be a positive integer satisfying $k<m$, $A_k$ be given by
\begin{equation*}
  A_k=\begin{pmatrix} \phi_k & \ol{r_k} \\ r_k & \ol{\phi_k} \end{pmatrix},
\end{equation*}
where $r_k$ and $\phi_{k\Im}$ satisfy 
\begin{equation}\label{rphik}
  r_k\in S^{(m-k)}\{-k+1,k\},\;\;
  \phi_{k\Im} \in S^{(m-k)}\{1,0\}
  \;\text{ and }\;
  |\phi_{k\Im}| \gtrsim |\xi|. 
\end{equation}
Then there exists a positive constant $N_k$ such that the following properties are established for any $N \ge N_k$: 
\begin{itemize}
\item[\rm (i)]
$A_k$ has complex conjugate eigenvalues $\la_k$ and $\ol{\la_k}$. 
\item[\rm (ii)]
The following matrix $M_k$ is invertible: 
\begin{equation*}
  M_k:=\begin{pmatrix} 1 & \ol{\de_k} \\ \de_k & 1 \end{pmatrix},
  \quad
  \de_k:=\frac{\la_k-\phi_k}{\ol{r_k}}, 
\end{equation*}
moreover, we have 
\begin{equation}\label{dek}
  \de_k \in S^{(m-k)}\{-k,k\}. 
\end{equation}
\item[\rm (iii)]
$A_{k+1}:=M_k^{-1}A_kM_k - M_k^{-1}(\pa_t M_k)$ is represented as follows: 
\begin{equation*}
  A_{k+1}=\begin{pmatrix}
  \phi_{k+1} & \ol{r_{k+1}} \\ r_{k+1} & \ol{\phi_{k+1}} \end{pmatrix},
\end{equation*}
where $r_{k+1}$ and $\phi_{k+1}=\phi_{(k+1)\Re}+\i\phi_{(k+1)\Im}$ are given by 
\begin{equation}\label{rk+1}
  r_{k+1}=-\frac{(\de_k)_t}{1-|\de_k|^2},
\end{equation}
\begin{equation*}
  \phi_{(k+1)\Re} =\phi_{k\Re}-\frac{\pa_t \log\(1-|\de_{k}|^2\)}{2} 
\end{equation*}
and
\begin{equation*}
  \phi_{(k+1)\Im}
 =\phi_{k\Im}\sqrt{1-\(\frac{|r_k|}{\phi_{k\Im}}\)^2}
   -\Im\(\ol{\de_k}r_{k+1}\).
\end{equation*}
\item[\rm (iv)]
$r_{k+1}$ and $\phi_{(k+1)\Im}$ satisfy the followings: 
\begin{equation*}
  r_{k+1}\in S^{(m-k-1)}\{-k,k+1\},
  \quad
  \phi_{(k+1)\Im} \in S^{(m-k-1)}\{1,0\},
  \;\text{ and }\;
  \phi_{(k+1)\Im} \gtrsim |\xi|.
\end{equation*}
\end{itemize}
\end{lemma}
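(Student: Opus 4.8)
The plan is to prove Lemma \ref{ref_diag} by induction on $k$, the base case $k=1$ being already established by Lemma \ref{symbol_de1} (with $r_1\in S^{(m-1)}\{0,1\}$, $\phi_{1\Im}\in S^{(m)}\{-1,0\}$ after applying Lemma \ref{symbol} (iii), $\de_1\in S^{(m-1)}\{-1,1\}$, and $|\phi_{1\Im}|=a(t)|\xi|\gtrsim|\xi|$ from \eqref{a0a1}) and the construction of $A_2$. So fix $k$ with $1\le k<m$ and assume \eqref{rphik}. First I would verify (i) and (ii): from \eqref{rphik} together with Lemma \ref{symbol} (ii) we get $(|r_k|/\phi_{k\Im})^2\in S^{(m-k)}\{-2k,2k\}$, so by Lemma \ref{symbol} (iv) there is $N_k$ with $(|r_k|/\phi_{k\Im})^2\le 1/2$ for $N\ge N_k$; hence the discriminant of $A_k$ is negative and the eigenvalues are $\la_k=\phi_{k\Re}+\i\phi_{k\Im}\sqrt{1-(|r_k|/\phi_{k\Im})^2}$ and $\ol{\la_k}$. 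For $\de_k$ I would rewrite, exactly as in the proof of Lemma \ref{symbol_de1},
\begin{equation*}
  \de_k=\frac{\i\phi_{k\Im}}{\ol{r_k}}\(\sqrt{1-\(\frac{|r_k|}{\phi_{k\Im}}\)^2}-1\)
   =-\frac{\i r_k}{\phi_{k\Im}}\cdot\frac{1}{\sqrt{1-\(\frac{|r_k|}{\phi_{k\Im}}\)^2}+1},
\end{equation*}
and then combine: $r_k/\phi_{k\Im}\in S^{(m-k)}\{-k,k\}$ by Lemma \ref{symbol} (ii) applied to $r_k$ and $1/\phi_{k\Im}\in S^{(m-k)}\{-1,0\}$ (Lemma \ref{symbol2} (ii), using $|\phi_{k\Im}|\gtrsim|\xi|$); the reciprocal factor lies in $S^{(m-k)}\{0,0\}$ by Lemma \ref{symbol2} (iii) and (i). Since $|\de_k|\le 1/2<1$ (again Lemma \ref{symbol} (iv), shrinking $N_k$ if necessary), $M_k$ is invertible with $M_k^{-1}=(1-|\de_k|^2)^{-1}\begin{pmatrix}1 & -\ol{\de_k}\\ -\de_k & 1\end{pmatrix}$, and this establishes \eqref{dek} and (ii).

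Next I would carry out the conjugation in (iii). The identity $M_k^{-1}(A_k-\pa_t I)M_k=\operatorname{diag}(\la_k,\ol{\la_k})-M_k^{-1}(\pa_t M_k)-\pa_t I$ holds because $M_k$ diagonalizes $A_k$; so $A_{k+1}=\operatorname{diag}(\la_k,\ol{\la_k})-M_k^{-1}(\pa_t M_k)$. A direct computation of $M_k^{-1}(\pa_t M_k)$ with $\pa_t M_k=\begin{pmatrix}0 & \ol{(\de_k)_t}\\ (\de_k)_t & 0\end{pmatrix}$ gives
\begin{equation*}
  M_k^{-1}(\pa_t M_k)=\frac{1}{1-|\de_k|^2}
  \begin{pmatrix} -\ol{\de_k}(\de_k)_t & \ol{(\de_k)_t} \\ (\de_k)_t & -\de_k\ol{(\de_k)_t}\end{pmatrix},
\end{equation*}
so that the $(2,1)$-entry of $A_{k+1}$ is $r_{k+1}=-(\de_k)_t/(1-|\de_k|^2)$ as in \eqref{rk+1}, the $(1,1)$-entry is $\phi_{k+1}=\la_k+\ol{\de_k}(\de_k)_t/(1-|\de_k|^2)=\la_k-\ol{\de_k}r_{k+1}$, and the $(2,2)$-entry is its conjugate — this confirms the conjugate-symmetric structure is preserved. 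Splitting $\phi_{k+1}$ into real and imaginary parts: $\phi_{(k+1)\Re}=\phi_{k\Re}-\Re(\ol{\de_k}r_{k+1})=\phi_{k\Re}-\frac{1}{2}\pa_t\log(1-|\de_k|^2)$ (using $\Re(\ol{\de_k}(\de_k)_t)=\frac12\pa_t|\de_k|^2$) and $\phi_{(k+1)\Im}=\phi_{k\Im}\sqrt{1-(|r_k|/\phi_{k\Im})^2}-\Im(\ol{\de_k}r_{k+1})$ (the first term being $\Im\la_k$), which are exactly the stated formulas.

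Finally, part (iv), which is where the symbol bookkeeping must close the induction. From \eqref{dek} and Lemma \ref{symbol} (i), $(\de_k)_t\in S^{(m-k-1)}\{-k,k+1\}$; from \eqref{dek} and Lemma \ref{symbol} (ii), $|\de_k|^2\in S^{(m-k)}\{-2k,2k\}$, so $1-|\de_k|^2\in S^{(m-k)}\{0,0\}$ with $1-|\de_k|^2\ge 3/4\gtrsim 1$, hence $1/(1-|\de_k|^2)\in S^{(m-k)}\{0,0\}$ by Lemma \ref{symbol2} (i). Multiplying (Lemma \ref{symbol} (ii)) gives $r_{k+1}\in S^{(m-k-1)}\{-k,k+1\}$, which is the first claim. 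For $\phi_{(k+1)\Im}$: the term $\phi_{k\Im}\sqrt{1-(|r_k|/\phi_{k\Im})^2}$ equals $\phi_{k\Im}\cdot(\text{an }S^{(m-k)}\{0,0\}\text{ factor})$ by Lemma \ref{symbol2} (iii), hence lies in $S^{(m-k-1)}\{1,0\}$; the term $\Im(\ol{\de_k}r_{k+1})$ lies in $S^{(m-k-1)}\{-2k,2k+1\}\subset S^{(m-k-1)}\{1,0\}$ after applying Lemma \ref{symbol} (iii) repeatedly (raising the $\xi$-exponent and lowering the $\Xi$-exponent costs nothing in $Z_H$). So $\phi_{(k+1)\Im}\in S^{(m-k-1)}\{1,0\}$. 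For the lower bound $\phi_{(k+1)\Im}\gtrsim|\xi|$ I would argue that $\phi_{k\Im}\sqrt{1-(|r_k|/\phi_{k\Im})^2}\ge\phi_{k\Im}/\sqrt2\gtrsim|\xi|$ while $|\Im(\ol{\de_k}r_{k+1})|\le C N^{-(2k-1)}(\Th(t)/\Xi(t))^{2k-1}|\xi|\le CC_0^{2k-1}N^{-(2k-1)}|\xi|$ is an arbitrarily small multiple of $|\xi|$ for $N$ large — so for $N\ge N_k$ suitably enlarged, $\phi_{(k+1)\Im}\ge\frac12\phi_{k\Im}/\sqrt2\gtrsim|\xi|$. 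This matches the hypotheses \eqref{rphik} at level $k+1$ (with $m-(k+1)=m-k-1$ smoothness, one lower than before — consistent with $k<m$ throughout), completing the induction. The main obstacle is purely clerical: keeping the two exponents $\{q,r\}$ in $S^{(p)}\{q,r\}$ straight through every product, reciprocal, composition and derivative, and making sure the smoothness index $p$ never drops below what subsequent steps need — in particular that one always stays within the regime $k<m$ so that at least $S^{(m-k-1)}$ with $m-k-1\ge 0$ is available; the lower bound $\phi_{(k+1)\Im}\gtrsim|\xi|$ requires the extra care of absorbing the correction term into the main term by choosing $N$ large, uniformly over the finitely many induction steps $k=1,\dots,m-1$.
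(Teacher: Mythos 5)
Your argument is correct and follows essentially the same route as the paper's own proof: the same eigenvalue formula, the same rewriting of $\de_k$ as in Lemma \ref{symbol_de1} to get \eqref{dek}, the same symbol-class bookkeeping via Lemmas \ref{symbol} and \ref{symbol2} for part (iv), and the same absorption of $\Im(\ol{\de_k}r_{k+1})$ into $\phi_{k\Im}$ by taking $N$ large for the lower bound (your exponent $2k-1$ there should be $2k+1$, but this only makes the smallness easier). The only presentational difference is that you frame the lemma as an induction with Lemma \ref{symbol_de1} as base case and you write out the conjugation $M_k^{-1}(\pa_t M_k)$ explicitly, which the paper leaves as ``direct computations.''
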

\begin{proof}
Let $N \ge N_k$ with a large constant $N_k$ so that Lemma \ref{symbol} and Lemma \ref{symbol2} can be applied. 
An eigenvalue of $A_k$ is represented by 
\begin{equation*}
  \la_k=\phi_{k\Re}+\i\phi_{k\Im}\sqrt{1-\(\frac{|r_k|}{\phi_{k\Im}}\)^2}. 
\end{equation*}
By \eqref{rphik}, Lemma \ref{symbol} (ii) and Lemma \ref{symbol2} (ii), 
we have 
\begin{equation}\label{rkphik}
  \(\frac{|r_{k}|}{\phi_{k\Im}}\)^2 \in S^{(m-k)}\{-2k,2k\}. 
\end{equation}
It follows from Lemma \ref{symbol} (iv) that the other eigenvalue of 
$A_k$ is given by $\ol{\la_k}$. 
By \eqref{rphik}, \eqref{rkphik} and using the same way as the proof of Lemma \ref{symbol_de1}, we have \eqref{dek}. 
Moreover, $M_k$ is invertible since $|\de_k|^2 \le 1/2$ by Lemma \ref{symbol} (iv). 
(iii) is prove by direct computations. 
By applying Lemma \ref{symbol2} (i) with $f(t,\xi)=1-|\de_k|^2\ge 1/2$, 
we have $(1-|\de_k|^2)^{-1}\in S^{(m-k)}\{0,0\}$. 
Therefore, by Lemma \ref{symbol} (i) and \eqref{rk+1}, we have 
$r_{k+1}\in S^{(m-k-1)}\{-k,k+1\}$, 
hence we have $\phi_{(k+1)\Im} \in S^{m-k-1}\{1,0\}$ 
by Lemma \ref{symbol} (ii), (iii), Lemma \ref{symbol2} (iii), 
\eqref{dek} and \eqref{rkphik}. 
Noting \eqref{rkphik}, $\Im(\ol{\de_k}r_{k+1})/\phi_{k\Im} \in S^{(m-k-1)}\{-2k-1,2k+1\}$ and Lemma \ref{symbol} (iv), we can suppose that 
\begin{align*}
  \(\frac{|r_{k}|}{\phi_{k\Im}}\)^2 \le \frac14
  \;\text{ and }\;
  \frac{\Im\(\ol{\de_k}r_{k+1}\)}{\phi_{k\Im}}\le \frac14. 
\end{align*}
It follows that 
\begin{align*}
  \phi_{(k+1)\Im}
 =\phi_{k\Im}\(
  \sqrt{1-\(\frac{|r_k|}{\phi_{k\Im}}\)^2}
   -\frac{\Im\(\ol{\de_k}r_{k+1}\)}{\phi_{k\Im}}\)
 \ge \frac{\sqrt{3}-1}{2}\phi_{k\Im}
  \gtrsim |\xi|. 
\end{align*}
\end{proof}

By Lemma \ref{ref_diag} we have the following proposition: 
\begin{proposition}
There exists a positive constant $N_m$ such that the following estimate is established for any $N \ge N_m$ in $Z_H$: 
\begin{equation}
  \cE(t,\xi) \simeq \cE(t_\xi,\xi). 
\end{equation}
\end{proposition}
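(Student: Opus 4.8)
The plan is to iterate the refined diagonalization procedure of Lemma~\ref{ref_diag} exactly $m-1$ times, starting from the system~\eqref{V1}, and then apply Lemma~\ref{lemm-estV} in the hyperbolic zone $Z_H$ with $\tilde{t}=t_\xi$. First I would verify that the hypotheses of Lemma~\ref{ref_diag} hold at the base step: by Lemma~\ref{symbol_de1} we already know $r_1\in S^{(m-1)}\{0,1\}$, $\phi_{1\Im}\in S^{(m)}\{-1,0\}$ with the correct sign $\phi_{1\Im}=a(t)|\xi|\gtrsim|\xi|$ (using~\eqref{a0a1}), so~\eqref{rphik} is satisfied with $k=1$. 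Then Lemma~\ref{ref_diag} produces, for $k=1,\ldots,m-1$, matrices $A_{k+1}$ of the same conjugate-symmetric structure with $r_{k+1}\in S^{(m-k-1)}\{-k,k+1\}$ and $\phi_{(k+1)\Im}\in S^{(m-k-1)}\{1,0\}$, $\phi_{(k+1)\Im}\gtrsim|\xi|$, provided $N\ge N_k$; taking $N_m:=\max\{N_0,N_1,\ldots,N_{m-1}\}$ (with $N_0$ from Lemma~\ref{symbol}, Lemma~\ref{symbol2}) makes all of these valid simultaneously. After $m-1$ steps we reach $A_m$ with $r_m\in S^{(0)}\{-m+1,m\}$, i.e.\ $|r_m(t,\xi)|\lesssim|\xi|^{-m+1}\Xi(t)^{-m}$, and $V_m:=M_{m-1}^{-1}\cdots M_1^{-1}V_1$ solves $\pa_t V_m=A_m V_m$.

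Next I would control the two relevant integrals over $Z_H$, i.e.\ for $t\ge t_\xi$. For the off-diagonal part, by property~(iii) of Lemma~\ref{symbol} applied to $r_m$ repeatedly, or directly from $r_m\in S^{(0)}\{-m+1,m\}$ together with $\Th(t)|\xi|\le N$ on the boundary, I estimate
\begin{align*}
  \int_{t_\xi}^t |r_m(s,\xi)|\,ds
  \lesssim |\xi|^{-m+1}\int_{t_\xi}^\infty \Xi(s)^{-m}\,ds
  \lesssim |\xi|^{-m+1}\,\Th(t_\xi)^{-(m-1)},
\end{align*}
where the last step uses~\eqref{int_Cm} of (H4*). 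Since $\Th(t_\xi)|\xi|=N$ on $\{t\ge T_0\}$ (and $\Th(t_\xi)|\xi|\ge N/2$ in general because of the definition of $T_0$), we get $|\xi|^{-m+1}\Th(t_\xi)^{-(m-1)}=(|\xi|\Th(t_\xi))^{-(m-1)}\lesssim N^{-(m-1)}\lesssim 1$, so this integral is bounded uniformly in $Z_H$. For the diagonal real part, I would use the explicit recursion $\phi_{(k+1)\Re}=\phi_{k\Re}-\tfrac12\pa_t\log(1-|\de_k|^2)$ from Lemma~\ref{ref_diag}(iii), starting from $\phi_{1\Re}=a'(t)/(2a(t))=\tfrac12\pa_t\log a(t)$, so that $\phi_{m\Re}$ is a sum of logarithmic derivatives: $\int_{t_\xi}^t\phi_{m\Re}(s,\xi)\,ds=\tfrac12\log\big(a(t)/a(t_\xi)\big)-\tfrac12\sum_{k=1}^{m-1}\log\big((1-|\de_k(t,\xi)|^2)/(1-|\de_k(t_\xi,\xi)|^2)\big)$, which is bounded because $a$ is bounded above and below by~\eqref{a0a1} and each $1-|\de_k|^2\in[1/2,1]$ by Lemma~\ref{ref_diag}(ii)/Lemma~\ref{symbol}(iv). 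Hence both exponents in Lemma~\ref{lemm-estV} are $O(1)$ on $Z_H$.

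Finally I would transfer the estimate back to $\cE$. The key equality~\eqref{cEV1} gives $\|V_1\|_{\C^2}^2=2\cE$, and since each $M_k$ and $M_k^{-1}$ is bounded with bounded inverse (entries $\de_k$ satisfy $|\de_k|^2\le1/2$ uniformly on $Z_H$), we have $\|V_m(t,\xi)\|_{\C^2}^2\simeq\|V_1(t,\xi)\|_{\C^2}^2\simeq\cE(t,\xi)$ with constants independent of $(t,\xi)\in Z_H$. Applying Lemma~\ref{lemm-estV} to $V_m$ between $t_\xi$ and $t$ with the two bounded integrals just obtained yields $\cE(t,\xi)\simeq\|V_m(t,\xi)\|_{\C^2}^2\simeq\|V_m(t_\xi,\xi)\|_{\C^2}^2\simeq\cE(t_\xi,\xi)$, which is the claim. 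I expect the main obstacle to be bookkeeping: checking that each symbol-class index is tracked correctly through the $m-1$ iterations (in particular that the regularity order $m-k$ does not run out before the last step, so that $r_m$ still lies in $S^{(0)}\{-m+1,m\}$ and all uses of Lemma~\ref{symbol} and Lemma~\ref{symbol2} are legitimate), and that a single threshold $N_m$ can be chosen to make every step of the induction valid at once; the analytic content, by contrast, is entirely contained in the already-proved lemmas and in~\eqref{int_Cm}.
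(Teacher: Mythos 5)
Your proposal is correct and follows essentially the same route as the paper: iterate Lemma \ref{ref_diag} to reach $A_m$ with $r_m\in S^{(0)}\{-m+1,m\}$, bound $\int_{t_\xi}^t|r_m|\,ds$ by $(|\xi|\Th(t_\xi))^{-(m-1)}=N^{-(m-1)}$ via \eqref{int_Cm}, telescope $\phi_{m\Re}$ into $\tfrac12\log a$ plus the $\log(1-|\de_k|^2)$ terms, and transfer back through the uniformly invertible $M_k$ using \eqref{cEV1}. The only blemish is the misquotation ``$\phi_{1\Im}\in S^{(m)}\{-1,0\}$'' (Lemma \ref{symbol_de1} asserts this for $1/\phi_{1\Im}$; what you need and what holds is $\phi_{1\Im}=a(t)|\xi|\in S^{(m-1)}\{1,0\}$), which does not affect the argument.
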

\begin{proof}
Let $M_k$ ($k=1,\ldots,m-1$) be defined by Lemma \ref{ref_diag}. 
Then we can suppose that $|\de_k(t,\xi)|^2 \le 1/2$ for $k=1,\ldots,m-1$. 
Denoting $\cM_{m-1} = M_1\cdots M_{m-1}$ and $V_m=\cM_{m-1}^{-1}V_1$, we have 
\begin{equation*}
  \(\pa_t - A_m\) V_m
 =\cM_{m-1}^{-1} \(\pa_t - A_1\)\cM_{m-1} V_m
 =\cM_{m-1}^{-1} \(\pa_t - A_1\) V_1 = 0
\end{equation*}
by \eqref{V1}. 
Therefore, by \eqref{estV} we have 
\begin{equation*}
  \|V_m(t,\xi)\|_{\C^2}^2
\begin{cases}
  \ds{\le \exp\(2\int^t_{t_\xi} \phi_{m\Re}(s,\xi)\,ds
        + 2\int^t_{t_\xi} |r_m(s,\xi)|\,ds\) 
  \|V_m(t_\xi,\xi)\|_{\C^2}^2}, 
\\[3ex]
  \ds{\ge \exp\(2\int^t_{t_\xi} \phi_{m\Re}(s,\xi)\,ds
        - 2\int^t_{t_\xi} |r_m(s,\xi)|\,ds\) 
  \|V_m(t_\xi,\xi)\|_{\C^2}^2}. 
\end{cases}
\end{equation*}
By Lemma \ref{ref_diag} (iii) and (iv) we have 
\begin{align*}
  \exp\(2\int^t_{t_\xi} \phi_{m\Re}(s,\xi)\,ds\)
&=\exp\(2\int^t_{t_\xi} \phi_{1\Re}(s,\xi)\,ds 
  + \sum_{k=1}^{m-1} \log\frac{1-|\de_k(t,\xi)|^2}{1-|\de_k(t_\xi,\xi)|^2}\)
\\
&= \frac{a(t)}{a(t_\xi)}
   \prod_{k=1}^{m-1}\frac{1-|\de_k(t,\xi)|^2}{1-|\de_k(t_\xi,\xi)|^2}
\\
&\begin{cases}
\le
  \(2^{m-1}a_0^{-1} a_1\),
  \\
\ge  \(2^{m-1}a_0^{-1} a_1\)^{-1}
\end{cases}
\end{align*}
and
\begin{align*}
  \exp\(2\int^t_{t_\xi} |r_m(s,\xi)|\,ds\)
\le\:&\exp\(C|\xi|^{-m+1} \int^t_{t_\xi} \Xi(s)^{-m}\,ds \)
\\
\le\:&\exp\(CN^{-m+1}\Th(t_\xi)^{m-1} \int^\infty_{t_\xi} \Xi(s)^{-m}\,ds \)
\simeq 1
\end{align*}
by \eqref{int_Cm}. 
It follows that
\begin{equation}\label{estVm}
  \|V_m(t,\xi)\|_{\C^2}^2 \simeq \|V_m(t_\xi,\xi)\|_{\C^2}^2
\end{equation}
in $Z_H$. 
Therefore, noting the estimates: 
\begin{align*}
  \|V_k\|_{\C^2}^2
 =\|M_k V_{k+1}\|_{\C^2}^2
  \le \(1+|\de_k|\)^2 \|V_{k+1}\|_{\C^2}^2 
  \le \frac{3+2\sqrt{2}}{2}\|V_{k+1}\|_{\C^2}^2
\end{align*}
and
\begin{align*}
  \|V_k\|_{\C^2}^2
\ge \(1-|\de_k|\)^2 \|V_{k+1}\|_{\C^2}^2 
\ge \frac{3-2\sqrt{2}}{2}\|V_{k+1}\|_{\C^2}^2
\end{align*}
for $k=1,\ldots,m-1$, it follows that 
\begin{equation*}
  \(\frac{3 - 2\sqrt{2}}{2}\)^{m-1}\|V_m\|_{\C^2}^2
  \le \|V_1\|_{\C^2}^2
  \le \(\frac{3 + 2\sqrt{2}}{2}\)^{m-1}\|V_m\|_{\C^2}^2,
\end{equation*}
and \eqref{cEV1}, we have 
\begin{equation}\label{estcE-ZH}
  \cE(t,\xi) \simeq \|V_m(t,\xi)\|_{\C^2}^2 
  \simeq \|V_m(t_\xi,\xi)\|_{\C^2}^2
  \simeq \cE(t_\xi,\xi).
\end{equation}
\end{proof}

Combining the estimates \eqref{estcE-ZP}, \eqref{estcE-ZH} and 
using Lemma \ref{lemm-Parseval}, the proof of Theorem \ref{Thm1} (iii) is concluded.

%
\section{Proof of Theorem \ref{Thm2}}
%

For a large constant $\tN$, we define $\tilde{T}_0$ and $\tt_\xi$ for $\xi \in [-2,2]^d$ by 
\[
  \tilde{T}_0:=\max\left\{t\ge 0\;;\; \La(t) = \frac{\tN}{2\sqrt{d}}\right\}
  \;\text{ and }\;
  \tt_\xi:=\max\left\{t\ge \tilde{T}\;;\; \La(t)|\xi| = \tN\right\},
\]
respectively. 
Then we define the zones $\tZ_H$ and $\tZ_\Psi$ by 
\begin{equation*}
  \tZ_\Psi:=\left\{(t,\xi)\in  \R_+\times [-2,2]^d \;;\; t \ge \tt_\xi \right\}
\end{equation*}
and
\begin{equation*}
  \tZ_H:=\left\{(t,\xi)\in  \R_+\times [-2,2]^d \;;\; t\le \tt_\xi \right\},
\end{equation*}
respectively. 

Let $t\le \ttx$. 
By the same way to derive the estimate \eqref{estcE0-ZP} and using \eqref{La-infty2}, we have 
\begin{align*}
  \cE(t,\xi)
  \lesssim\:& \exp\(\frac{2a_1}{a_0}\Th\(\ttx\)|\xi|\)\cE(0,\xi)
\\
  =\: &\exp\(\frac{2a_1 \tN}{a_0}\frac{|\xi|}{\tN}
  \Th\(\La^{-1}\(\frac{\tN}{|\xi|}\)\)\)
  \cE(0,\xi) 
\\
  \le\: &\exp\(\frac{2a_1 \tN}{a_0} \frac{|\xi|}{N}
  \Th\(\La^{-1}\(\frac{N}{|\xi|}\)\)\)
  \cE(0,\xi)
\\
  \le\: &\exp\(|\xi|\Th\(\La^{-1}\(\frac{N}{|\xi|}\)\)\)
  \cE(0,\xi) 
\end{align*}
for any $N \ge 2a_1 \tN / a_0$. 

Let $t\ge \ttx$. 
Noting that $\ttx \ge \tx$ with $\tN=N$, Lemma \ref{ref_diag} is valid in $\tZ_H$ for any large $\tN$. 
Therefore, by (H6*) there exist positive constants $C$ and $N$ such that
\begin{align*}
  \int^t_{\ttx} |r_m(s,\xi)|\,ds
  \le \: &
  C|\xi|^{-m+1} \int^\infty_{\ttx} \Xi(s)^{-m}\,ds
\\
 = \: & CN^{-m}  
  \La\(\ttx\)^{m} \Th(\ttx)^{-1} \int^\infty_{\ttx} \Xi(s)^{-m}\,ds
  \: |\xi|\Th(\ttx) 
\\
 \le \: & C
  N^{-m} 
  \sup_{t\ge 0}\left\{
    \La(t)^{m} \Th(t)^{-1} \int^\infty_{t} \Xi(s)^{-m}\,ds
  \right\}
  |\xi|\Th\(\La^{-1}\(\frac{\tN}{|\xi|}\)\) 
\\
 \le \: & 
  |\xi|\Th\(\La^{-1}\(\frac{N}{|\xi|}\)\). 
\end{align*}
By \eqref{cEV1} and the same way to derive the estimate 
\eqref{estVm}, we have the following estimate:
\begin{equation*}
  \cE(t,\xi)
  \lesssim \exp\(|\xi|\Th\(\La^{-1}\(\frac{N}{|\xi|}\)\)\) \cE(\ttx,\xi)
\end{equation*}
for any $t \ge \ttx$. 
Combining the estimates of $\cE(t,\xi)$ in $\tZ_\Psi$ and $\tZ_H$, we have 
\begin{align*}
  \cE(t,\xi) \lesssim \exp\(2|\xi|\Th\(\La^{-1}\(\frac{N}{|\xi|}\)\)\). 
\end{align*}
\qed

%
\section{Proof of Theorem \ref{Thm3}}
%

The following proposition is essential for the proof of Theorem \ref{Thm3}. 

\begin{proposition}\label{Prop-Thm3}
Let $\{M_j\}$ be a logarithmically convex sequence. 
If $v=\{v[k]\}_{k\in \Z^d}$ satisfies 
\begin{equation}\label{eq1-Prop-Thm3}
  \sum_{k\in \Z^d} k^\al v[k] = 0 
\end{equation}
for any $\al\in \N_0^d$ and there exists a positive constant $\rho$ such that 
\begin{equation}\label{eq2-Prop-Thm3}
  \sup_{k\in \N^d}\left\{
   |v[k]| |k|^{d+1} T[\{M_j\}]\(\rho^{-1}|k|\) 
  \right\} <\infty, 
\end{equation}
then the following estimate is established: 
\begin{equation*}
  \sup_{\th\in \T^d\setminus\{0\}}
  \left\{
  \left|\hat{v}(\th)\right|T\left[\left\{\frac{M_j}{j!}\right\}\right]
  \(\frac{1}{d\rho|\th|}\)
  \right\}<\infty.
\end{equation*}
\end{proposition}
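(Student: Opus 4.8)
The plan is to expand $\hat v(\th) = \sum_{k} e^{-\i k\cdot\th} v[k]$ and exploit the vanishing moment conditions \eqref{eq1-Prop-Thm3} to gain an arbitrary power of $|\th|$. Concretely, for each $k$ and each truncation order $n\in\N$, Taylor's theorem with remainder gives
\[
  e^{-\i k\cdot\th}
 =\sum_{|\al|<n}\frac{(-\i k\cdot\th)^{|\al|}}{|\al|!}\bigg|_{\text{expanded}}
  + R_n(k,\th),
\]
where I would actually expand $e^{-\i k\cdot\th}$ as a one-variable exponential in the scalar $k\cdot\th$, so that $e^{-\i k\cdot\th}=\sum_{p=0}^{n-1}\frac{(-\i)^p (k\cdot\th)^p}{p!}+R_n$ with $|R_n(k,\th)|\le \frac{|k\cdot\th|^n}{n!}\le\frac{(d|k||\th|)^n}{n!}$ (using $|k\cdot\th|\le d|k|\,|\th|$ when $\th\in\T^d$ and the entries of $k$ are bounded by $|k|$). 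Each polynomial term $(k\cdot\th)^p$ is a linear combination of monomials $k^\al$ with $|\al|=p$, so summing against $v[k]$ kills every term with $0\le p\le n-1$ by \eqref{eq1-Prop-Thm3}. Hence for every $n$,
\[
  |\hat v(\th)|
 \le \sum_{k\in\Z^d}|v[k]|\,\frac{(d|k|\,|\th|)^n}{n!}.
\]

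Next I would absorb the factor $|k|^n/n!$ into the decay of $v[k]$ supplied by \eqref{eq2-Prop-Thm3}. By definition of the associated function, $T[\{M_j\}](\rho^{-1}|k|)\ge (\rho^{-1}|k|)^n/M_n$ for every $n$, so \eqref{eq2-Prop-Thm3} yields a constant $C_0$ with $|v[k]|\le C_0|k|^{-d-1}\rho^n M_n|k|^{-n}$ for all $k\in\N^d$ and all $n$ (and a symmetric bound on the full lattice using that $T$ controls all of $\Z^d\setminus\{0\}$ after splitting by sign patterns — a harmless finite factor $2^d$). Plugging this in with the same index $n$ used in the Taylor truncation gives
\[
  |\hat v(\th)|
 \le C_0\,2^d\Big(\sum_{k\ne 0}|k|^{-d-1}\Big)\,
   \frac{d^n\rho^n M_n}{n!}\,|\th|^n
 = C\,\frac{M_n}{n!}\,(d\rho|\th|)^n,
\]
where $\sum_{k\ne0}|k|^{-d-1}<\infty$ is exactly why the extra power $|k|^{d+1}$ sits in \eqref{eq2-Prop-Thm3}. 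Since this holds for every $n\in\N$, I can optimize: $|\hat v(\th)|\le C\,(d\rho|\th|)^n M_n/n!$ for all $n$ means $|\hat v(\th)|\le C\inf_n\{(d\rho|\th|)^n M_n/n!\}$. Recalling $T[\{M_j/j!\}](\tau)=\sup_j\{\tau^j j!/M_j\}$, we get $\inf_n\{\tau^n M_n/n!\}=1/T[\{M_j/j!\}](1/\tau)$ with $\tau=d\rho|\th|$, i.e.
\[
  |\hat v(\th)|\,T\!\left[\left\{\frac{M_j}{j!}\right\}\right]\!\!\left(\frac{1}{d\rho|\th|}\right)\le C,
\]
which is the claimed bound (uniformly in $\th\in\T^d\setminus\{0\}$, since $C$ is independent of $\th$).

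The main obstacle I anticipate is bookkeeping rather than conceptual: making sure the absolute convergence of $\sum_k|v[k]|(d|k||\th|)^n/n!$ is justified before the moment cancellation is invoked (it follows from the $|k|^{-d-1}$-type decay once any single $n$ is fixed, so one should fix $n$ first, then take $n\to\infty$ at the very end), and handling the lattice $\Z^d$ versus the positive orthant $\N^d$ on which \eqref{eq2-Prop-Thm3} is stated — the logarithmic convexity of $\{M_j\}$ guarantees $T[\{M_j\}]$ and $T[\{M_j/j!\}]$ are genuine (nondecreasing, finite for the relevant arguments) associated functions, and symmetry of $|k|$ under sign changes reduces the general lattice sum to $2^d$ copies of the orthant sum. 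One should also note that the inequality $|k_j|\le|k|$ and $|k\cdot\th|\le d|k|\,|\th|$ for $\th\in\T^d=[-\pi,\pi]^d$ is what produces the constant $d$ inside the argument $1/(d\rho|\th|)$; tracking that factor consistently through the Taylor remainder and the $T$-rescaling is the one place where an off-by-a-constant slip would be easy.
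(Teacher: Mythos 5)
Your proposal is correct and follows essentially the same route as the paper: the paper first shows $\partial_\th^\al \hat v(0)=0$ and $|\partial_\th^\al\hat v(\th)|\lesssim M_{|\al|}\rho^{|\al|}$ from \eqref{eq1-Prop-Thm3} and \eqref{eq2-Prop-Thm3}, then applies Taylor's theorem to $\hat v$ at the origin (Lemma \ref{Lemma1-Thm3}) and optimizes over the order to produce $T[\{M_j/j!\}]$ --- exactly the bound $|\hat v(\th)|\le C M_n(d\rho|\th|)^n/n!$ you obtain by Taylor-expanding the kernel $e^{-\i k\cdot\th}$ term by term. The only difference is that you inline the paper's Lemma \ref{Lemma1-Thm3} by expanding the exponential rather than $\hat v$ itself; the moment cancellation, the use of the $|k|^{-d-1}$ weight for summability, and the final passage to the associated function are identical.
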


In order to prove Proposition \ref{Prop-Thm3}, we introduce the following lemma: 
\begin{lemma}\label{Lemma1-Thm3}
If $f\in C^\infty(\T^d)$ satisfies 
$(\pa_\th^\al f)(0)=0$ and 
\begin{equation}\label{eq-Lemma1-Thm3}
  \left|\pa_\th^\al f(\th)\right| \le C M_{|\al|} \rho^{|\al|}
\end{equation}
for any $\al \in \N_0^d$ with positive constants $C$, $\rho$ and a logarithmically convex sequence $\{M_j\}$, then the following estimate is established: 
\begin{equation*}
  \sup_{\th\in \T^d\setminus\{0\}}\left\{
  T\left[\left\{\frac{M_j}{j!}\right\}\right]\(\frac{1}{d \rho|\th|}\)
  |f(\th)|
  \right\}<\infty.
\end{equation*}
\end{lemma}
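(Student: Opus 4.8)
The statement to prove is Lemma \ref{Lemma1-Thm3}: from a flat Taylor expansion at $\th=0$ and Gevrey-type bounds on all derivatives of $f$, we must control $|f(\th)|$ by the reciprocal of the associated function $T[\{M_j/j!\}](1/(d\rho|\th|))$. The natural route is Taylor's theorem with integral (or Lagrange) remainder of order $n$ about the origin. Since $(\pa_\th^\al f)(0)=0$ for every multi-index $\al$, in particular for all $|\al|\le n-1$, the Taylor polynomial of degree $n-1$ vanishes identically, so $f(\th)$ equals its $n$-th order remainder:
\begin{equation*}
  f(\th)=\sum_{|\al|=n}\frac{n}{\al!}\,\th^\al
  \int_0^1(1-s)^{n-1}\(\pa_\th^\al f\)(s\th)\,ds.
\end{equation*}
This holds for every $n\in\N$, which is exactly what lets us optimize over $n$ at the end.

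\textbf{Key steps.} First I would bound the remainder crudely: using \eqref{eq-Lemma1-Thm3} with $|\al|=n$, each integrand is at most $C M_n \rho^n$, and $\int_0^1(1-s)^{n-1}\,ds=1/n$, so
\begin{equation*}
  |f(\th)|\le C M_n \rho^n \sum_{|\al|=n}\frac{|\th^\al|}{\al!}
  = C M_n \rho^n \frac{\(\sum_{j=1}^d|\th_j|\)^n}{n!}
  \le C M_n \rho^n \frac{\(d|\th|\)^n}{n!}
  = C\,\frac{M_n}{n!}\(d\rho|\th|\)^n,
\end{equation*}
where I used the multinomial identity $\sum_{|\al|=n}\th^\al/\al! = (\th_1+\cdots+\th_d)^n/n!$ and the elementary bound $\sum_j|\th_j|\le d\,|\th|$ valid on $\T^d$. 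Second, since this holds for every $n\ge 1$, I take the infimum over $n$: writing $\tau:=1/(d\rho|\th|)$ (which is $\ge$ some fixed constant once $|\th|$ is bounded, though the bound below works for all $\th\neq 0$), we get
\begin{equation*}
  |f(\th)|\le C\inf_{n\ge 1}\frac{M_n/n!}{\tau^n}
  = \frac{C}{\sup_{n\ge 1}\{\tau^n/(M_n/n!)\}}
  = \frac{C}{T[\{M_j/j!\}](\tau)}.
\end{equation*}
Here I am using that $\{M_j\}$ logarithmically convex implies $\{M_j/j!\}$ is still a sequence for which the associated function $T$ is defined by the stated supremum over $j\in\N$ (note the supremum in the definition of $T$ ranges over $j\in\N$, i.e.\ $j\ge 1$, matching the range of $n$ here). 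Rearranging gives
\begin{equation*}
  T\left[\left\{\frac{M_j}{j!}\right\}\right]\(\frac{1}{d\rho|\th|}\)|f(\th)|\le C
\end{equation*}
uniformly in $\th\in\T^d\setminus\{0\}$, which is the claim.

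\textbf{Main obstacle.} The analytic content is entirely routine; the only points that need care are bookkeeping ones. One is making sure the multi-index combinatorics are right — that the constant $d$ (rather than something like $d!$ or $2^d$) is the correct one appearing inside $T[\cdots](1/(d\rho|\th|))$, which forces the use of the multinomial theorem rather than a lazy term-by-term count of the $\binom{n+d-1}{d-1}$ multi-indices of length $n$. The other is the interface with the definition of $T$: one must check that $\inf_{n\ge 1}\{(M_n/n!)\tau^{-n}\}=1/T[\{M_j/j!\}](\tau)$, i.e.\ that reciprocal-of-sup equals inf-of-reciprocals, which is immediate but worth stating. I do not expect any genuine difficulty; the lemma is a soft, standard passage from "all Taylor coefficients vanish plus Gevrey bounds" to "the function is controlled by the associated function of the defining sequence," and it will feed directly into Proposition \ref{Prop-Thm3} by applying it to $f=\hat v$.
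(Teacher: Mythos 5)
Your proof is correct and follows essentially the same route as the paper: expand $f$ by Taylor's theorem about the origin, use the vanishing of all derivatives at $0$ together with \eqref{eq-Lemma1-Thm3} to bound $|f(\th)|\le C\,(M_n/n!)(d\rho|\th|)^n$ for every $n$, and then take the infimum over $n$ to produce $C/T[\{M_j/j!\}](1/(d\rho|\th|))$. The only difference is cosmetic: you use the integral remainder and the multinomial identity where the paper uses a Lagrange-form remainder combined with $|\al|!\le d^{|\al|}\al!$, and your version is if anything slightly more carefully justified for $d\ge 2$.
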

\begin{proof}
For any $\al\in \N_0^\al$, there exists $\ze \in \T^d$ such that 
$f(\th)=(\pa_\th^\al f)(\ze) \th^\al/\al!$ by Taylor's theorem. 
By \eqref{eq-Lemma1-Thm3} and the inequalities $|\th^\al|\le|\th|^{|\al|}$ and 
$|\al|! \le d^{|\al|}\al!$ we have
\begin{align*}
  |f(\th)|
  \le C \frac{M_{|\al|}\rho^{|\al|}}{\al!} \left|\th^\al\right|
  \le C \frac{M_{|\al|}}{|\al|!} \(d\rho|\th|\)^{|\al|}.
\end{align*}
Therefore, we have 
\begin{align*}
  |f(\th)| \le C\inf_{j\in \N_0}\left\{
  \frac{M_j}{j!}\(d\rho |\th|\)^j\right\}
 =\frac{C}{T\left[\left\{\frac{M_j}{j!}\right\}\right]
  \(\frac{1}{d \rho|\th|}\)}.
\end{align*}
\end{proof}

\noindent
{\it Proof of Proposition \ref{Prop-Thm3}.}\; 
Let $\al\in \N_0^d$. 
By \eqref{eq1-Prop-Thm3}, we have 
\begin{align*}
  \pa_\th^\al\hat{v}(\th)|_{\th=0}
=\sum_{k} (-ik)^\al e^{-ik\cd\th} v[k]\Bigr|_{\th=0}
=(-i)^{|\al|}\sum_{k} k^\al v[k] = 0.
\end{align*}
Let $\al\neq 0$. By \eqref{eq2-Prop-Thm3}, there exists a positive constant $C$ such that
\begin{align*}
  \left|\pa_\th^\al \hat{v}(\th)\right|
  \le \: & \sum_{k} \left|(-ik)^\al e^{-ik\cd\th} v[k]\right|
  \le C \sum_{k\in \Z^d\setminus\{0\}}
    |k|^{-d-1+|\al|}\(T[\{M_j\}]\(\rho^{-1}|k|\)\)^{-1}
\\
  \le \: & C\sum_{k\in \Z^d\setminus\{0\}}
  |k|^{-d-1+|\al|}\(\frac{\(\rho^{-1}|k|\)^{|\al|}}{M_{|\al|}}\)^{-1}
  \simeq M_{|\al|} \rho^{|\al|}.
\end{align*}
Therefore, by Lemma \ref{Lemma1-Thm3} we have
\begin{align*}
  \left|\hat{v}(\th)\right|
  \lesssim 
  \frac{1}{T\left[\left\{\frac{M_j}{j!}\right\}\right]
  \(\frac{1}{d \rho|\th|}\)}. 
\end{align*}
\qed

\noindent
{\it Proof of Theorem \ref{Thm3} {\rm (i)}.}\; 
By \eqref{La-infty2}, \eqref{eq1-Thm3}, $|\th| \le \pi|\xi(\th)|/2$, 
and applying Proposition \ref{Prop-Thm3} with 
$v=D_j^+ u_0$ $(j=1,\ldots,d)$ and $v=u_1$, we have 
\begin{align*}
  \cE(0,\th)
\simeq \: &
  \left|\hat{u}_1(\th)\right|^2
 +\sum_{j=1}^d\left|\widehat{D_j^+ u_0}(\th)\right|^2
\lesssim
  T\left[\left\{\frac{M_j}{j!}\right\}\right]
  \(\frac{1}{d \rho|\th|}\)^{-2}
\\
\le \: &
  L(N,\{M_j\})^{-2}
  \exp\(-2d\rho|\th|\Th\(\La^{-1}\(\frac{N}{d\rho|\th|}\)\)\)
\\
 = \: &L(N,\{M_j\})^{-2}\exp\(-2N\mu\(\frac{d\rho|\th|}{N}\)\)
\\
\le \: &
  L(N,\{M_j\})^{-2}\exp\(-2N\mu\(\frac{\pi d\rho|\xi(\th)|}{2N}\)\). 
\end{align*}
By Theorem \ref{Thm2} there exists a positive constant $\tN_0 \ge 1$ such that 
\begin{equation}\label{tcEtcE0_tN0}
  \cE(t,\th) \lesssim 
  \exp\(2\tN_0\mu\(\frac{|\xi(\th)|}{\tN_0}\)\) \cE(0,\th). 
\end{equation}
If \eqref{eq1-Thm3} holds for any $N\ge 1$, 
then putting $N_0=\max\{\tN_0,\pi d\rho \tN_0/2\}$ and noting 
\begin{align*}
\tN_0 \mu\(\frac{|\xi|}{\tN_0}\)-N_0\mu\(\frac{\pi d\rho|\xi|}{2N_0}\)
\begin{cases}
\ds{=\tN_0\(1 -\frac{\pi d \rho}{2}\)\mu\(\frac{|\xi|}{\tN_0}\)} 
  \le 0 & \text{ for }\; \dfrac{\pi d\rho}{2} \ge 1,
\\[3ex]
\ds{  \le
  \tN_0 \mu\(\frac{|\xi|}{\tN_0}\)-N_0\mu\(\frac{|\xi|}{N_0}\)
  =0}  &  \text{ for }\; \dfrac{\pi d\rho}{2} < 1,
\end{cases}
\end{align*}
we have $\cE(t,\th) \lesssim L(N_0,\{M_j\})^{-2}$. 
It follows that 
\[
  U(N_0,u_0,u_1)\lesssim (2\pi)^d L(N_0,\{M_j\})^{-2} <\infty, 
\]
and thus \eqref{Ebdd} is established. 
\qed

\noindent
{\it Proof of Theorem \ref{Thm3} {\rm (ii)}.}\; 
By the same way for the proof of (i), there exist positive constants $N_0\ge 1$ and $N_1$ such that $L(N_1,\{M_j\})>0$ and 
\begin{align*}
  U(N_0,u_0,u_1)
  \lesssim \: &
  L(N_1,\{M_j\})^{-2}
\\
  & \: \times  \int_{\T^d}
  \exp\(
   2N_0\mu\(\frac{|\xi(\th)|}{N_0}\)
  -2N_1\mu\(\frac{\pi d\rho|\xi(\th)|}{2N_1}\)\)\,d\th. 
\end{align*}
By \eqref{eq4-Thm3} there exists a positive constant $N$ such that 
\begin{equation*}
  \frac{\mu\(\frac{1}{N}\frac{|\xi(\th)|}{N_0}\)}
       {\mu\(\frac{|\xi(\th)|}{N_0}\)}
 =\frac{\Th\(\La^{-1}\(N \frac{N_0}{|\xi(\th)|}\)\)}
       {N\Th\(\La^{-1}\(\frac{N_0}{|\xi(\th)|}\)\)}
 \ge \frac{N_0}{N_1}. 
\end{equation*}
Therefore, putting $\rho=2N_1/(\pi d N_0 N)$ we have 
\begin{equation*}
  N_0 \mu\(\frac{|\xi(\th)|}{N_0}\)
  \le
  N_1 \mu\(\frac{\pi d \rho|\xi(\th)|}{2N_1}\),
\end{equation*}
it follows that $U(N_0,u_0,u_1)<\infty$, and thus \eqref{Ebdd} is established. 
\qed

%
\section{Appendix}
%
\subsection{Proof of Lemma \ref{TDFT}}
Since $f \in l^1(\Z^d)$, we have
\begin{align*}
  \sum_k e^{-\i k\cd \th} f[k \pm e_j]
=e^{\pm \i e_j \cd \th} \sum_k e^{-\i(k \pm e_j)\cd \th} f[k \pm e_j]
   = e^{\pm \i\th_j} \sum_k e^{-\i k\cd \th}f[k]. 
\end{align*}
Hence we have 
\begin{align*}
  \cF_{\Z^d}[D_j^+ f](\th)
 =\sum_{k} e^{-\i k\cd\th}\(f[k+e_j]-f[k]\)
 =\(e^{\i\th_j} - 1\) \cF_{\Z^d}[f](\th),
\end{align*}
that is, \eqref{TDFT2} is valid. 
Similarly, we have 
\begin{align*}
  \cF_{\Z^d}[D_j^- f](\th)
 =\(1-e^{-\i\th_j}\) \cF_{\Z^d}[f](\th).
\end{align*}
Therefore, we have \eqref{TDFT3} as follows:
\begin{align*}
 \cF_{\Z^d}[D_j^+ D_j^- f](\th)
= \: &\(e^{\i\th_j} - 1\) \cF_{\Z^d}[D_j^- f](\th) 
=\(e^{\i\th_j} - 1\)\(1-e^{-\i\th_j}\) \cF_{\Z^d}[f](\th) 
\\
= \: &\(e^{\i\frac{\th_j}{2}}-e^{-\i\frac{\th_j}{2}}\)^2 \hat{f}(\th)
 =-4\(\sin\frac{\th_j}{2}\)^2 \hat{f}(\th). 
\end{align*}
\qed

\subsection{Proof of Lemma \ref{lemm-Parseval}}
From the definition of $\hat{f}$ we have 
\begin{align*}
  \int_{\T^d}|\hat{f}(\th)|^2\,d\th
= \: &\int_{\T^d}
  \(\sum_{k} e^{-\i k\cd\th} f[k]\)
  \ol{\(\sum_{l} e^{-\i l\cd\th} f[l]\)}\,d\th
\\
= \: & \sum_{k,l} f[k]\ol{f[l]} \int_{\T}  e^{-\i(k-l)\cd\th} \,d\th 
\\
= \: &(2\pi)^d \sum_{k,l} \de_{kl} f[k] \ol{f[l]}
=(2\pi)^d \sum_{k} |f[k]|^2, 
\end{align*}
where $\de_{kl}$ denotes the Kronecker delta. 
\qed

\subsection{Proof of Lemma \ref{lemm-E-cE}}
By Lemma \ref{TDFT} and Lemma \ref{lemm-Parseval}, we have 
\[
  (2\pi)^d\sum_{k\in\Z^d}|u'(t)[k]|^2
 =\int_{\T^d}\left|\cF_{\Z^d}[u'(t)](\th)\right|^2\,d\th
 =\int_{\T^d}\left|\pa_t \hat{u}(t,\th)\right|^2\,d\th,
\]
and
\begin{align*}
  (2\pi)^d\sum_{j=1}^d \sum_{k\in\Z^d}|D_j^+ u(t)[k]|^2
= \: &\sum_{j=1}^d 
  \int_{\T^d} \left| \cF_{\Z^d}[D_j^+ u(t)](\th)\right|^2\,d\th
\\
= \: &\sum_{j=1}^d 
  \int_{\T^d} \left| \(e^{\i\th_j}-1\)\hat{u}(t,\th)\right|^2\,d\th
\\
= \: &\int_{\T^d} \sum_{j=1}^d \xi_j(\th)^2|\hat{u}(t,\th)|^2\,d\th.
\end{align*}
Thus we have \eqref{EcE}. 
\qed


\end{document}